\newenvironment{proof}{\noindent{\bf Proof. \/}\begin{small}\noindent}{\hfill\EndProofMarker\end{small}}
\newcommand{\EndProofMarker}{$\Box$}
\date{}
\begin{document}

\title{Least-Squares Pad\'e approximation of parametric and stochastic Helmholtz maps}

\author{Francesca Bonizzoni$^\sharp$\footnote{F. Bonizzoni has been funded by the Austrian Science Fund (FWF) through the project  F~65.}, 
	Fabio Nobile$^\mathsection$, 
	Ilaria Perugia$^\sharp$\footnote{I. Perugia has been funded by the Vienna Science and Technology Fund (WWTF) through the project MA14-006, 
	and by the Austrian Science Fund (FWF) through the projects P 29197-N32 and F~65.},
	Davide Pradovera$^\mathsection$}

\maketitle 

\vspace*{-0.6cm}
\begin{center}
{\small 
$^\sharp$ Faculty of Mathematics, University of Vienna\\
				Oskar-Morgenstern-Platz 1,1090 Wien, Austria\\
{\tt francesca.bonizzoni@univie.ac.at}, 
{\tt ilaria.perugia@univie.ac.at}\\ \medskip
$^\mathsection$ CSQI -- MATH, 
	Ecole Polytechnique F\'ed\'erale de Lausanne\\
	Station 8, CH-1015 Lausanne, Switzerland\\
{\tt fabio.nobile@epfl.ch}, 
{\tt davide.pradovera@epfl.ch}\\
}
\end{center}
\vspace*{0.2cm}

\begin{abstract}

The present work deals with the rational model order reduction method based on the single-point Least-Square (LS) Pad\'e approximation technique 
introduced in~\cite{Bonizzoni2016}. Algorithmical aspects concerning the construction of the rational LS-Pad\'e approximant are described. 
In particular, the computation of the Pad\'e denominator is reduced to the calculation of the eigenvector corresponding to the minimal 
eigenvalue of a Gramian matrix. The LS-Pad\'e technique is employed to approximate the frequency response map associated with various parametric 
time-harmonic acoustic wave problems, namely, a transmission/reflection problem, a scattering problem, and a problem in high-frequency regime. 
In all cases we establish the meromorphy of the frequency response map. The Helmholtz equation with stochastic wavenumber is also considered. 
In particular, for Lipschitz functionals of the solution and their corresponding probability measures, we establish weak convergence of the 
measure derived from the LS-Pad\'e approximant to the true one. 2D numerical tests are performed, which confirm the effectiveness of the approximation method.
\end{abstract}

\vspace*{0.2cm}

\noindent
{\bf Keywords}: Hilbert space-valued meromorphic maps, Pad\'e approximants, convergence of Pad\'e approximants, parametric Helmholtz equation, PDE with random coefficients.

\vspace*{0.2cm}

\noindent  
{\bf AMS Subject Classification}: 30D30, 41A21, 41A25, 35A17, 35J05, 35R60, 65D15

\vspace*{0.2cm}

\section{Introduction}
\label{sec:Intruduction}

Many applications require the fast and accurate numerical evaluation of Helm\-holtz frequency response functions, i.e., functions that map the wavenumber to the solution (or some quantity of interest related to the solution) of the corresponding time-harmonic wave-problem, for a large number of frequencies. In mid- and high-frequency regimes, very fine meshes or high polynomial degrees should be considered, in order to obtain accurate Finite Element (FE) solutions of the time-harmonic wave-problem. Moreover, low order FE schemes are affected by the pollution effect~\cite{Babuska1997}, namely, an increasing discrepancy between
the best approximation error and the FE error, as the wave number increases. 
In the ``many-queries" context, i.e., when many solutions of the underlying Partial Differential Equation (PDE) are needed, the ``brute force" approach entails the solution of a large number of high-dimensional linear systems, and it is then out of reach.

Model order reduction methods aim at significantly reducing the computational cost by approximating the quantity of interest starting from evaluations at only few wavenumbers. 
They rely on a two-step strategy: the OFFLINE stage consists in the computation of a finite dimensional basis - e.g., the basis of snapshots 
(see, e.g., \cite{Chen2010,Huynh2007,Lassila2012,Sen2007,Sen2006,Tonn2011,Veroy2003,HUYNH2014,Hain2018, Modesto2015}), 
or evaluations of the frequency response map and its derivatives at fixed centers (Pad\'e method, see, e.g., \cite{Guillaume1998,Huard2000,Hetmaniuk2013,Hetmaniuk2012,
Bonizzoni2016}); the output of this phase, whose computational cost may be very high, is stored, to be used during the ONLINE phase, 
in which the approximation of the frequency response map corresponding to a given new value of the parameter is constructed.
This stage does not involve the numerical solution of
any PDE, and is expected to provide the output in real time. 

In this work, we focus on the Pad\'e-based model order reduction technique introduced in~\cite{Bonizzoni2016}, defined for any given univariate Hilbert 
space-valued meromorphic map $\T:\C\rightarrow V$, and relying on a \emph{single-point Least-Square (LS) Pad\'e approximant}. 
In particular, the single-point LS-Pad\'e approximant of $\T$ centered in $z_0\in\C$, denoted by $\pade{\T}$, is given by the rational $V$-valued map 
$\pade{\T}(z)=\frac{\num(z)}{\den(z)}$, where $\num(z)=\sum_{\alpha=0}^M p_\alpha (z-z_0)^\alpha$, with coefficients $p_\alpha\in V$ (we write $\num\in\PspaceV{M}{V}$), and $\den\in\PspaceS{N}$, where $\PspaceS{N}$ is the set of all polynomials with complex coefficients $\{q_\alpha\}_{\alpha=0}^N$ such that $\sum_{\alpha=0}^N\abs{q_{\alpha}}^2=1$.

In~\cite{Bonizzoni2016} we have analyzed the convergence of $\pade{\T}$ to $\T$ as $M\rightarrow \infty$ for a fixed denominator degree $N$. 
In particular, the LS-Pad\'e approximant $\pade{\T}$ identifies the $N$ poles of $\T$ closest to the center $z_0$, as limit of the roots of the denominator $\den(z)$ for $M$ going to $+\infty$.


In this paper, we describe in detail the \emph{algorithmical aspects} of the construction of the single-point LS-Pad\'e approximant. In particular, the identification of the 
LS-Pad\'e denominator is proved to be equivalent to the identification of the normalized eigenvector corresponding to the smallest non-negative eigenvalue of the Gramian matrix of the set $\left\{\T(z_0),\coeff{\T}{1,z_0},\ldots,\coeff{\T}{N,z_0}\right\}$, where $\coeff{\T}{\alpha,z_0}$ denotes the Taylor coefficient of $\T$ of order $\alpha$ at $z_0$.

Moreover, we explore the effectiveness of the single-point LS-Pad\'e technique when applied to \emph{parametric} frequency response problems which go beyond the 
setting considered in~\cite{Bonizzoni2016}, namely, a \emph{transmission/reflection} problem, and a \emph{scattering} problem. In both cases, 
we first prove that the frequency response map associated with the considered problem is meromorphic. 2D numerical results are provided, which demonstrate
the convergence of the LS-Pad\'e approximation. 
Moreover, 2D numerical tests in \emph{high-frequency} regime are performed for the parametric problem presented in~\cite{Bonizzoni2016}.

The \emph{stochastic} Helmholtz boundary value problem is also considered. We refer to~\cite{Ohayon2017} for Uncertainty Quantification for frequency responses in 
vibroacoustics, to~\cite{Ezvan2017, Jacquelin2017, Jacquelin} for model order reduction for random frequency responses in structural dynamics, 
and to~\cite{Schwab2011, Hiptmair2018} for the stochastic Helmholtz equation with uncertainty arising either in the forcing term or in the boundary data or in the shape of the scatterer.

Within the present framework, we propose a novel approach to the \emph{stochastic} Helm\-holtz boundary value problem based on the LS-Pad\'e technique, 
where the wavenumber $k^2$ is modeled as a random variable taking values into $K=[k^2_{min},k^2_{max}]$. 
We approximate the random variable $X:=\mathcal L(\mcS(k^2))$ with $X_P:=\mathcal L(\pade{\mcS}(k^2))$.
Here, $\mathcal L:V\rightarrow\R$ is a Lipschitz functional representing a quantity of interest, $\mcS$ is the meromorphic frequency response map associated with the (stochastic) Helmholtz equation endowed with either homogeneous  Dirichlet or homogeneous Neumann boundary conditions, and $\pade{\mcS}$ is the LS-Pad\'e approximation of $\mcS$. An upper bound on the approximation error for the characteristic function is derived.

All the considered boundary value problems fall into the following general setting.
Let $D$ be an open connected bounded Lipschitz domain in $\R^{d}$ ($d=1,2,3$), and  consider the following Helmholtz boundary value problem
\begin{equation}
\label{eq:helmholtz}
	\left\{\begin{array}{ll}
		-\Delta u-k^2 \varepsilon_r\ u=f&\textmd{in }D,\\
		u=g_D & \textmd{on }\Gamma_D,\\
		\Grad u\cdot\mathbf{n} = g_N & \textmd{on }\Gamma_N,\\
		\Grad u\cdot\mathbf{n} - i k u= g_R & \textmd{on }\Gamma_R,
	\end{array}\right.
\end{equation}
where the wavenumber $k^2$ is either a parameter or a random variable, which takes values into an interval of interest $K:=[k^2_{min},k^2_{max}]\subset\R^+$, $\varepsilon_r=\varepsilon_r(\x)\in L^\infty(D)$, $f\in L^2(D)$, $g_D\in H^{1/2}(\Gamma_D)$, $g_N\in H^{-1/2}(\Gamma_N)$, $g_R\in H^{-1/2}(\Gamma_R)$, and $\{\Gamma_D,\Gamma_N,\Gamma_R\}$ is a partition of $\partial D$, i.e., $\overline{\Gamma}_D\cup\overline{\Gamma}_N\cup\overline{\Gamma}_R=\partial D$ and $\Gamma_D\cap\Gamma_N=\Gamma_D\cap\Gamma_R=\Gamma_N\cap\Gamma_R=\emptyset$.
Throughout the paper, we denote with $V$ the Hilbert space $H^1_{\Gamma_D}(D)$. Moreover we assume the functions in $V$ to be complex-valued.

The outline of the paper is the following. In Section~\ref{sec:pade_approximant}, we recall the definition of the single-point LS-Pad\'e approximant and the main convergence 
result of~\cite{Bonizzoni2016}. In Section~\ref{sec:algorithmical_aspects}, we describe the algorithm to compute the LS-Pad\'e approximant. 
Section~\ref{sec:transmission_reflection} deals with a parametric transmission/reflection problem, whereas Section~\ref{sec:scattering_problem} deals with a parametric 
scattering problem. In Section~\ref{sec:high_frequency}, the LS-Pad\'e approximation is tested in high-frequency regime, and in Section~\ref{sec:stochastic_helmholtz} 
the LS-Pad\'e methodology is applied to the stochastic setting. Finally, conclusions are drawn in Section~\ref{sec:conclusions}.

%
%
%

\section{Least-Squares Pad\'e approximant of the parametric model problem}
\label{sec:pade_approximant}

This section deals with the Least-Squares (LS) Pad\'e approximation of the following parametric Helmholtz problem:
\begin{problem}[Parametric Model Problem]
	\label{prob:model_problem_parametric}
	The Helmholtz equation~\eqref{eq:helmholtz} has parametric wavenumber $k^2\in K:=[k^2_{min},k^2_{max}]\subset\R^+$, $\varepsilon_r=1$, and is endowed with either Dirichlet or Neumann homogeneous boundary conditions on $\partial D$, i.e., $\Gamma_R=\emptyset$ and either $\Gamma_D=\partial D$ and $g_D=0$, or $\Gamma_N=\partial D$ and $g_N=0$.
\end{problem}
The following result was proved in~\cite{Bonizzoni2016}.
\begin{theorem}
\label{thm:helmholtz_well_posed}
Let $\mcS$ be the frequency response map which associates to each $z\in \C$, the solution $u_z\in V$ of the weak formulation of Problem~\ref{prob:model_problem_parametric}:
\begin{align}
\label{eq:helmholtz_weak}
	& \int_D\Grad u_z(\x)\cdot\overline{\Grad v}(\x)\ d\x 
	-z\int_D u_z(\x)\overline{v}(\x)\ d\x
	= \int_D f(\x)\overline{v}(\x)\ d\x
	\quad\forall v\in V.
\end{align}
Then, $\mcS$ is well-defined, i.e., problem~\eqref{eq:helmholtz_weak} admits a unique solution for any $z\in\C\setminus\Lambda$, $\Lambda$ being the set of (real, non negative) eigenvalues of the Laplace operator with the considered boundary conditions.
Moreover, $\mcS$ is meromorphic in $\C$, with a pole of order one in each $\lambda\in\Lambda$. 
\end{theorem}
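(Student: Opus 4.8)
The plan is to recast the weak problem as a compact perturbation of an invertible operator and then invoke spectral/analytic-Fredholm theory. First I would fix an auxiliary shift $z_\ast\in\C\setminus\Lambda$ (for instance $z_\ast=-1$) so that the sesquilinear form
\begin{equation*}
	b(u,v):=\int_D\Grad u\cdot\overline{\Grad v}\,d\x-z_\ast\int_D u\,\overline{v}\,d\x
\end{equation*}
is bounded and coercive on $V$ \emph{for both} the Dirichlet and the Neumann choice of boundary conditions: with $z_\ast=-1$ it coincides with the full $H^1(D)$ inner product, so coercivity holds directly by definition of $V$, without the Poincar\'e inequality (which would cover only the Dirichlet case). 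By Lax--Milgram, $b$ induces an isomorphism, giving a Riesz representative $F\in V$ of the right-hand side, $b(F,v)=\int_D f\,\overline{v}\,d\x$, and a bounded self-adjoint operator $T:V\to V$ defined by $b(Tu,v)=\int_D u\,\overline{v}\,d\x$ for all $v\in V$.

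Next I would establish the crucial structural input, namely that $T$ is compact: it factors through the mass form, continuous on $L^2(D)$, while the embedding $V=H^1_{\Gamma_D}(D)\hookrightarrow L^2(D)$ is compact by Rellich--Kondrachov. With this, the weak formulation~\eqref{eq:helmholtz_weak} rewrites as $(I-(z-z_\ast)T)u_z=F$ in $V$. Applying the spectral theorem to the compact self-adjoint $T$, its eigenpairs $(\mu_n,\psi_n)$ (with $\psi_n$ $b$-orthonormal) satisfy, upon unravelling the definition of $T$, the identity $\int_D\psi_n\overline{v}=\mu_n b(\psi_n,v)$, i.e. $\psi_n$ is a Laplace eigenfunction with eigenvalue $\lambda_n=z_\ast+1/\mu_n$, and conversely every element of $\Lambda$ arises this way; thus $\mu\mapsto z_\ast+1/\mu$ is a bijection between the spectrum of $T$ and $\Lambda$. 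Well-posedness is then the Fredholm alternative: $I-(z-z_\ast)T$ is invertible exactly when $1/(z-z_\ast)$ is not an eigenvalue of $T$, i.e. exactly when $z\notin\Lambda$, yielding $u_z=(I-(z-z_\ast)T)^{-1}F$.

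For meromorphy I would invoke the analytic Fredholm theorem: $z\mapsto(z-z_\ast)T$ is an entire map into the compact operators, so $(I-(z-z_\ast)T)^{-1}$ is a meromorphic operator-valued function on all of $\C$ with poles contained in $\Lambda$, and composition with the fixed vector $F$ transfers meromorphy to $\mcS$. Equivalently, and more explicitly, diagonalizing in the eigenbasis gives
\begin{equation*}
	u_z=\sum_n\frac{\lambda_n-z_\ast}{\lambda_n-z}\,b(F,\psi_n)\,\psi_n,
\end{equation*}
a locally uniformly convergent series of $V$-valued rational functions whose only singularities lie in $\Lambda$ (convergence away from $\Lambda$ follows from $\sum_n|b(F,\psi_n)|^2<\infty$ together with boundedness of the scalar factors). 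Each $1/(\lambda_n-z)$ contributes only a simple pole, and since $T$ is self-adjoint its eigenprojections are the genuine spectral projections (no Jordan blocks), so $(I-(z-z_\ast)T)^{-1}$, hence $\mcS$, has at worst a simple pole at every $\lambda\in\Lambda$, with residue equal to $-(\lambda-z_\ast)$ times the $b$-projection of $F$ onto the $\lambda$-eigenspace.

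The main obstacle is twofold. The first is carrying the argument uniformly through the Dirichlet and Neumann cases, which the coercive shift above handles cleanly by avoiding any appeal to Poincar\'e. The second, more delicate, is pinning down the \emph{exact} pole order: self-adjointness of $T$ gives ``order at most one'' immediately, but asserting ``order exactly one'' requires that the residue not vanish, i.e. that the data $f$ have nonzero projection onto each eigenspace; I would therefore either add this nondegeneracy hypothesis on $f$ or read the statement as an upper bound on the pole order.
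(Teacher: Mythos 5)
Your argument is correct, and it reaches the goal by a genuinely different route than the paper's. The proof of Theorem~\ref{thm:helmholtz_well_posed} is deferred to~\cite{Bonizzoni2016}, and its strategy (mirrored in this paper in Proposition~\ref{prop:transmission_S_meromorphy_bis} for the transmission problem) is to expand the solution directly in the $L^2$-orthonormal eigenfunctions $\{\varphi_j\}$ of the Dirichlet/Neumann Laplacian, obtain the explicit coefficients $f_j/(\lambda_j-z)$ with $f_j=\int_D f\overline{\varphi}_j$, and read off well-posedness, meromorphy and simple poles from the locally uniformly convergent series. You instead set up a coercive shift, build the compact self-adjoint solution operator $T$, get well-posedness from the Fredholm alternative, and meromorphy from the analytic Fredholm theorem, recovering the paper's series only at the end by diagonalizing $T$ (your $b$-orthonormal $\psi_n$ are the paper's $\varphi_j$ up to normalization, and your formula for $u_z$ matches theirs after noting $b(F,\psi_n)=\int_D f\overline{\psi}_n$). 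What your packaging buys: both boundary conditions are handled uniformly without Poincar\'e, the spectral theory of the Laplacian is rederived rather than assumed, and the argument survives loss of self-adjointness --- indeed it is precisely the strategy this paper itself adopts for the scattering problem, via Steinberg's Theorem~\ref{thm:steinberg} in Proposition~\ref{prop:scattering_mer}, where no self-adjoint structure is available. What the paper's direct expansion buys: explicit residues and coefficient formulas that feed the quantitative a priori bounds and the constants in the P\'ade convergence analysis (Theorem~\ref{th:pade_conv}), which an abstract inversion of $I-(z-z_\ast)T$ does not provide by itself. Finally, your closing caveat about the pole order is well taken but is not a defect of your proof relative to the paper's: if $f$ is $L^2$-orthogonal to the eigenspace of some $\lambda\in\Lambda$, the residue vanishes and $\mcS$ is holomorphic there, so ``a pole of order one in each $\lambda\in\Lambda$'' tacitly assumes nonvanishing projections of $f$; the paper's own eigenfunction-expansion argument carries exactly the same implicit nondegeneracy assumption.
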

\begin{remark}
For the sake of simplicity, in Problem~\ref{prob:model_problem_parametric} we endow the Helmholtz equation with either homogeneous Dirichlet or homogeneous Neumann boundary conditions. Small modifications to the proofs of Theorem 3.1, Proposition 4.1, and Proposition 4.2 in~\cite{Bonizzoni2016} allow to handle both homogeneous mixed Dirichlet/Neumann and non-homogeneous Neumann boundary conditions, and to conclude an analogous result as Theorem~\ref{thm:helmholtz_well_posed}. In Section~\ref{sec:transmission_reflection}, we will show how to handle non-homogeneous Dirichlet boundary conditions.
\end{remark}

We recall now the definition and the convergence theorem of the LS-Pad\'e approximant of the frequency response map $\mcS$.

Let $K=[k^2_{min},k^2_{max}]\subset\R^+$ be the interval of interest, and $z_0\in \C\setminus\Lambda$ with $\Real{z_0}>0$. To fix the ideas we take $z_0=\frac{k^2_{min}+k^2_{max}}{2}+\delta i$, with $\delta\in\R\setminus\{0\}$ arbitrary.
The LS-Pad\'e approximant of $\mcS$, centered in $z_0$, is given by the ratio of two polynomials of degree $M$ and $N$ respectively:
\begin{equation}
\label{eq:pade}
	\pade{\mcS}(z):=\frac{\num(z)}{\den(z)}.
\end{equation}
The denominator $\den(z)$ is a function of $z$ only, and belongs to the space $\PspaceS{N}$ of all polynomials of degree at most $N$, $q=\sum_{i=0}^N q_i (z-z_0)^i\in\Pspace{N}$, such that $\sum_{i=0}^N\abs{q_i}^2=1$. The numerator $\num:\C\rightarrow V$ is a function of both the complex variable $z$ and the space variable $\x\in D$. More precisely, $\num(z)=\sum_{i=0}^M p_i(z-z_0)^i$, with coefficients $p_i\in V$. In the following, we denote with $\PspaceV{M}{V}$ the space of polynomials of degree at most $M$ in $z\in\C$ with coefficients in $V$.

The construction of the LS-Pad\'e approximant proposed in~\cite{Bonizzoni2016} relies on the minimization of the functional $j_{E,\rho}: \PspaceV{M}{V} \times \PspaceS{N} \rightarrow \R$, parametric in $E\in\N$ and $\rho\in\R^+$, defined as
\begin{equation}
\label{eq:functional_j}
\jfun{P}{Q}=\left(\sum_{\alpha=0}^E
	\normw{\coeff{Q(z)\mcS(z)-P(z)}{\alpha,z_0}}{V}{\sqrt{\Real{z_0}}}^2
	\rho^{2\alpha}\right)^{1/2},
\end{equation}
where the brackets $\coeff{\cdot}{\alpha,z_0}$ denote the $\alpha$-th Taylor coefficient of the Taylor series centered in $z_0$ (i.e., for a map $\T:\C\setminus\Lambda\rightarrow V$, $\coeff{\T(z)}{\alpha,z_0}=\frac{1}{\alpha!}\derzz{\alpha}{\T}(z_0)$), and $\normw{\cdot}{V}{\sqrt{\Real{z_0}}}$ denotes the weighted $H^1(D)$-norm (equivalent to the standard one) defined as
\begin{equation}
	\label{eq:weighted_norm}
	\normw{v}{V}{\sqrt{\Real{z_0}}}:=\sqrt{\norm{\Grad v}{ L^2(D)}^2 + \Real{z_0}\norm{v}{L^2(D)}^2}.	
\end{equation}

We recall the formal definition of the LS-Pad\'e approximant of the solution map $\mcS$, and we refer to Section~\ref{sec:algorithmical_aspects} for the proof of the existence of a (not in general unique) LS-Pad\'e approximant.
\begin{definition}
\label{def:pade_approximant}
Let $M,N\in \N$, $E\geq M+N$, and $\rho\in\R^+$. 
A LS-Pad\'e approximant $\pade{\mcS}$, centered in $z_0$, of the solution map $\mcS$ is a quotient $\frac{P}{Q}$ with $P\in\PspaceV{M}{V}$, $Q\in\PspaceS{N}$, such that
\begin{equation}
\label{eq:pade_approximant}
\jfun{P}{Q}\leq \jfun{R}{S}\quad\forall R\in\PspaceV{M}{V},\ \forall S\in\PspaceS{N}.
\end{equation}
\end{definition}

The following convergence result has been proved in~\cite{Bonizzoni2016}.

\begin{theorem}
\label{th:pade_conv}
Let $N\in\N$ be fixed, and let $R\in\R^+$ be such that the disk $\overline{\ball{z_0}{R}}$ contains exactly $N$ poles of $\mcS$. Then, for any $z\in\ball{z_0}{R}\setminus\Lambda$ and for any $\abs{z}<\rho<R$, it holds
\begin{equation*}
	\lim_{M\rightarrow \infty}
	\normw{\mcS(z)-\pade{\mcS}(z)}{V}{\sqrt{\Real{z_0}}}=0,
\end{equation*}
for all $E\geq M+N$.
Moreover, given $\alpha>0$ small enough, introduce the open subset 
$$
K_\alpha:=\bigcup_{\lambda\in\Lambda\cap K} (\lambda-\alpha,\lambda+\alpha)\subset K.
$$
Then for any $0<\rho<R$ such that $\ball{z_0}{\rho}\supset K$, there exists $M^\star\in \N$ such that, for any $M\geq M^\star$ and for any $z\in K\setminus K_\alpha$, it holds
\begin{equation}
\label{eq:pade_approx_S}
	\normw{\mcS(z)-\pade{\mcS(z)}}{V}{\sqrt{\Real{z_0}}}
	\leq C \frac{1}{\alpha^3} \left(\frac{\rho}{R}\right)^{M+1},
\end{equation}
where the constant $C>0$ depends on $\rho$, $R$, $N$, $z_0$, $\lambda_{min}=\min\{\lambda\in\Lambda\}$, $\norm{f}{L^2(D)}$, and  $g(z)=\prod_{\lambda\in\Lambda\cap \ball{z_0}{R}} (z-\lambda)$.
\end{theorem}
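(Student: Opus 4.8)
The plan is to exploit the quadratic (Gramian) structure of $\jfun{\cdot}{\cdot}$ and to compare the optimal denominator against the \emph{exact} one built from $g$. First I would eliminate the numerator: for fixed $Q\in\PspaceS{N}$ the coefficients $\coeff{Q\mcS-P}{\alpha,z_0}$ of order $\alpha\le M$ are annihilated by the Taylor truncation $P_\alpha=\coeff{Q\mcS}{\alpha,z_0}$, so that
\begin{equation*}
\min_{P\in\PspaceV{M}{V}}\jfun{P}{Q}^2=\sum_{\alpha=M+1}^{E}\normw{\coeff{Q\mcS}{\alpha,z_0}}{V}{\sqrt{\Real{z_0}}}^2\rho^{2\alpha}=:\mathbf q^{\mathsf H}G\,\mathbf q,
\end{equation*}
a Hermitian form in the coefficient vector $\mathbf q=(q_0,\dots,q_N)$ of $Q$, so that the LS-Pad\'e denominator $Q^\star$ is the unit-norm eigenvector of $G$ associated with its smallest eigenvalue. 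The condition $E\ge M+N$ ensures that at least $N$ high-order coefficients enter $G$, exactly what is needed to resolve the $N$ unknowns of $Q$.

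Next I would decompose $\mcS=\mcS_{\mathrm{sing}}+\mcS_{\mathrm{hol}}$ on $\ball{z_0}{R}$, where $\mcS_{\mathrm{sing}}(z)=\sum_{\lambda}\frac{r_\lambda}{z-\lambda}$ collects the principal parts at the $N$ poles inside the disk (simple, by Theorem~\ref{thm:helmholtz_well_posed}, with residues $r_\lambda\in V$) and $\mcS_{\mathrm{hol}}$ is holomorphic on $\ball{z_0}{R}$. A direct expansion yields the key identity
\begin{equation*}
\coeff{Q\,\mcS_{\mathrm{sing}}}{\alpha,z_0}=-\sum_{\lambda}Q(\lambda)\,r_\lambda\,(\lambda-z_0)^{-(\alpha+1)},
\end{equation*}
showing that the singular part of $Q\mcS$ is driven entirely by the values $Q(\lambda)$. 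For the upper bound I would test with the normalized exact denominator $Q_g:=g/\norm{g}{}$, for which $Q_g(\lambda)=0$ at every pole, so only $\mcS_{\mathrm{hol}}$ survives; Cauchy estimates on circles of radius $r\uparrow R$ then bound its weighted Taylor tail, whence
\begin{equation*}
\mathbf q^{\star\,\mathsf H}G\,\mathbf q^\star=\min_{Q}\min_{P}\jfun{P}{Q}^2\le C\left(\frac{\rho}{R}\right)^{2(M+1)}.
\end{equation*}

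The crux — and the step I expect to be the main obstacle — is the matching \emph{lower} bound, i.e. converting smallness of $\mathbf q^{\star\,\mathsf H}G\,\mathbf q^\star$ into convergence of the roots of $Q^\star$ to the true poles. Here the identity above is decisive: the slowest-decaying terms of $G$ come from the pole nearest $z_0$ and are proportional to $|Q(\lambda)|^2$. Using that the residues $r_\lambda$ are linearly independent in $V$ (for this problem they are eigenfunctions of the Laplacian at distinct eigenvalues, hence orthogonal), one obtains, after isolating the leading order, a coercivity estimate of the form
\begin{equation*}
\mathbf q^{\mathsf H}G\,\mathbf q\gtrsim\sum_{\lambda}|Q(\lambda)|^2\left(\frac{\rho}{|\lambda-z_0|}\right)^{2(M+1)},
\end{equation*}
up to contributions decaying strictly faster in $M$. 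Combined with the upper bound this forces $|Q^\star(\lambda)|\lesssim(|\lambda-z_0|/R)^{M+1}\to0$ for each of the $N$ distinct, strictly interior poles; since $Q^\star$ has degree $N$, vanishing at all of them pins down $Q^\star\to Q_g$ up to a unimodular factor. The delicate points are controlling the cross-interaction between $\mcS_{\mathrm{sing}}$ and $\mcS_{\mathrm{hol}}$ inside $G$ and handling the finite truncation at $E$, both of which I would absorb into faster-decaying remainders.

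Finally I would assemble the error bound. Writing $\mcS-\pade{\mcS}=(Q^\star\mcS-P^\star)/Q^\star$, the numerator has weighted Taylor tail of size $\jfun{P^\star}{Q^\star}=O((\rho/R)^{M+1})$ on $\overline{\ball{z_0}{\rho}}\supset K$, while the denominator is bounded below on $K\setminus K_\alpha$: since the roots of $Q^\star$ converge to the poles, one has $|Q^\star(z)|\gtrsim\alpha$ there for $M$ large. Tracking the near-pole behaviour of $\mcS$, the residue mismatch, and $Q^\star$ through its derivative produces the extra negative powers of $\alpha$; careful bookkeeping yields the stated factor $\alpha^{-3}$ and a constant $C$ depending on $\rho,R,N,z_0,\lambda_{min},\norm{f}{L^2(D)}$ and $g$. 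The first, pointwise, statement follows from the same representation on any compact subset of $\ball{z_0}{R}\setminus\Lambda$, where $|Q^\star|$ is bounded away from zero once $M$ is large.
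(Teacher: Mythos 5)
Your outline reconstructs, essentially step by step, the proof from \cite{Bonizzoni2016}; note that the present paper does not reprove Theorem~\ref{th:pade_conv} at all --- it quotes it, and the only argument it supplies is the remark upgrading the cited bound $C\,\alpha^{-1}(\rho/R)^{M+1}$, whose constant scales like $g_{K\setminus K_\alpha}^{-2}$, to the stated $\alpha^{-3}$ form via $g_{K\setminus K_\alpha}\geq C_g\alpha$. Your reduction to the Gramian quadratic form is exactly Propositions~\ref{prop:existence_pade} and~\ref{pr:step2} of the paper; the upper bound obtained by testing with the normalized exact denominator plus Cauchy estimates, and the lower bound exploiting orthogonality of the residues (eigenfunctions at distinct eigenvalues, orthogonal also in the weighted $H^1$ inner product) to force $\abs{Q^\star(\lambda)}\lesssim(\abs{\lambda-z_0}/R)^{M+1}$, are both the right --- and the reference's --- mechanisms.

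The step that fails as written is the assembly. You bound the numerator by asserting that it ``has weighted Taylor tail of size $\jfun{P^\star}{Q^\star}=O((\rho/R)^{M+1})$ on $\overline{\ball{z_0}{\rho}}\supset K$''. This is not usable: (i) $j_{E,\rho}$ controls only the coefficients of orders $M+1,\dots,E$, not the full tail; (ii) much more seriously, coefficient smallness yields pointwise bounds only inside the disk of convergence of the Taylor series, and the series of $Q^\star\mcS-P^\star$ centered at $z_0$ has radius of convergence equal to the distance from $z_0$ to the nearest pole of $\mcS$, because $Q^\star$ only \emph{nearly} vanishes there, so $Q^\star\mcS$ still has poles. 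Since $K$ straddles several poles, for most $z\in K\setminus K_\alpha$ that series diverges and $\jfun{P^\star}{Q^\star}$ carries no pointwise information. The repair, which your own key identity already provides, is the everywhere-valid algebraic representation
\begin{equation*}
	(Q^\star\mcS-P^\star)(z)
	=\sum_{\lambda\in\Lambda\cap\ball{z_0}{R}}
	\frac{Q^\star(\lambda)\,r_\lambda}{z-\lambda}
	\left(\frac{z-z_0}{\lambda-z_0}\right)^{M+1}+T_M(z),
\end{equation*}
where $T_M$ is the Taylor tail (orders $>M$) of $Q^\star\mcS_{\mathrm{hol}}$, a function holomorphic in a disk of radius strictly larger than $R$, hence uniformly $O((\rho/R)^{M+1})$ on $K$ by Cauchy estimates. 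Combined with $\abs{Q^\star(\lambda)}\lesssim(\abs{\lambda-z_0}/R)^{M+1}$ and $\abs{z-\lambda}\geq\alpha$, this yields a numerator bound $O(\alpha^{-1}(\rho/R)^{M+1})$ --- note the extra factor $\alpha^{-1}$, which your version misses --- and dividing by $\abs{Q^\star(z)}$, bounded below on $K\setminus K_\alpha$ in terms of $g_{K\setminus K_\alpha}\geq C_g\alpha$ once $M$ is large, produces the remaining negative powers of $\alpha$ in \eqref{eq:pade_approx_S}. The same representation, rather than the raw Taylor tail, is also what makes your first (pointwise) statement valid for $z$ lying beyond the innermost pole.
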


\begin{remark}
In~\cite{Bonizzoni2016} the bound
\begin{equation*}
	\normw{\mcS(z)-\pade{\mcS(z)}}{V}{\sqrt{\Real{z_0}}}
	\leq C \frac{1}{\alpha} \left(\frac{\rho}{R}\right)^{M+1}
\end{equation*}
was proved, with a constant $C$ that depends on $\frac{1}{g_{K\setminus K_\alpha}^2}$, with 
$$
g_{K\setminus K_\alpha}:=\min_{z\in K\setminus K_\alpha}\abs{g(z)},
$$ 
and $g(z)=\prod_{\lambda\in\Lambda\cap \ball{z_0}{R}} (z-\lambda)$. 
Since the frequency response map $\mcS$ presents only simple poles (given by the Dirichlet/Neumann Laplace eigenvalues), and the interval of interest $K$ contains a finite number of poles of $\mcS$, it follows that there exists $C_g>0$ such that 
$$
\abs{g(z)}\geq C_g\min_{\lambda\in\Lambda\cap \ball{z_0}{R}}\abs{z-\lambda} 
	\quad\forall z\in K,
$$ 
hence $g_{K\setminus K_{\alpha}}\geq C_g\alpha$ and the bound~\eqref{eq:pade_approx_S} follows.
\end{remark}

We can draw the following consequences:
\begin{enumerate}[(a)]
\item The roots of the LS-Pad\'e denominator $\den$ approximate the $N$ poles of $\mcS$, closest to $z_0$.
\item The region of convergence of $\pade{\mcS}$ is an open circle whose radius is equal to the distance between $z_0$ and the $(N+1)$-th closest pole of $\mcS$.
\end{enumerate}

\section{Algorithmical aspects}
\label{sec:algorithmical_aspects}

In this section, we describe an algorithm for the computation of a LS-Pad\'{e} approximant (defined according to Definition~\ref{def:pade_approximant}) of the Helmholtz frequency response map $\mcS$ introduced in the previous section. We underline that the presented algorithm can be likewise applied to any $V$-valued meromorphic map $\T:\C\rightarrow V$.
As a first instructive step, we recall the proof of the existence of such an approximant, which was developed in~\cite[Proposition 4.1]{Bonizzoni2016}.

\begin{proposition}
\label{prop:existence_pade}
For any $M,N\in \N$, $E\geq M+N$, and $\rho\in\R^+$, there exists a LS-Pad\'e approximant centered in $z_0$.
\end{proposition}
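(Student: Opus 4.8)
The plan is to realize the approximant as a minimizer of the functional $\jfun{\cdot}{\cdot}$ over the admissible set $\PspaceV{M}{V}\times\PspaceS{N}$, and to reduce the existence question to the minimization of a \emph{continuous} function over a \emph{compact} set, so that the Weierstrass extreme value theorem applies. The key structural observation is that the two arguments decouple: for a \emph{fixed} denominator $Q\in\PspaceS{N}$, the minimization over the numerator $P\in\PspaceV{M}{V}$ is an unconstrained least-squares problem that can be solved explicitly.

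First I would make the relevant Taylor coefficients concrete. Since $z_0\in\C\setminus\Lambda$, Theorem~\ref{thm:helmholtz_well_posed} guarantees that $\mcS$ is holomorphic in a neighborhood of $z_0$, so the coefficients $\coeff{\mcS}{\beta,z_0}\in V$ are well defined for every $\beta\geq0$. Writing $P(z)=\sum_{i=0}^M p_i(z-z_0)^i$ and $Q(z)=\sum_{i=0}^N q_i(z-z_0)^i$, the Cauchy product gives, for $0\le\alpha\le E$,
\begin{equation*}
\coeff{Q(z)\mcS(z)-P(z)}{\alpha,z_0}
= \sum_{i=0}^{\min(\alpha,N)} q_i\,\coeff{\mcS}{\alpha-i,z_0} \;-\; p_\alpha,
\end{equation*}
with the convention $p_\alpha=0$ for $\alpha>M$. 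Thus each free coefficient $p_\alpha$ (with $0\le\alpha\le M$) enters the sum defining $\jfun{P}{Q}$ through a single term of index $\alpha$, weighted by $\rho^{2\alpha}>0$; the unique inner minimizer is therefore
\begin{equation*}
p_\alpha^\star(Q) := \sum_{i=0}^{\min(\alpha,N)} q_i\,\coeff{\mcS}{\alpha-i,z_0},
\qquad \alpha=0,\ldots,M,
\end{equation*}
which annihilates those terms and leaves the reduced residual
\begin{equation*}
\Phi(Q):=\min_{P\in\PspaceV{M}{V}}\jfun{P}{Q}^2
= \sum_{\alpha=M+1}^{E}\normw{\,\sum_{i=0}^{\min(\alpha,N)} q_i\,\coeff{\mcS}{\alpha-i,z_0}\,}{V}{\sqrt{\Real{z_0}}}^2\rho^{2\alpha},
\end{equation*}
a quantity depending on $Q$ alone. (If $E\le M$ this sum is empty and $\Phi\equiv0$, so existence is trivial; the interesting regime is $E\ge M+1$, which is implied by the standing assumption $E\ge M+N$.)

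Next I would verify continuity and invoke compactness. The map sending the coefficient vector $(q_0,\ldots,q_N)$ to $\Phi(Q)$ is a finite sum of squared weighted norms of fixed linear combinations of the $\coeff{\mcS}{\beta,z_0}$, hence a non-negative quadratic, and in particular continuous, function on $\C^{N+1}$. Under the identification of a polynomial with its coefficient vector, the admissible set $\PspaceS{N}$ is precisely the unit sphere $\{(q_0,\ldots,q_N):\sum_{i=0}^N|q_i|^2=1\}\subset\C^{N+1}$, which is compact. By the Weierstrass theorem $\Phi$ attains its minimum at some $Q^\star\in\PspaceS{N}$; setting $P^\star:=\sum_{\alpha=0}^M p_\alpha^\star(Q^\star)(z-z_0)^\alpha$, the pair $(P^\star,Q^\star)$ satisfies~\eqref{eq:pade_approximant} and is a LS-Pad\'e approximant centered in $z_0$.

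I do not anticipate a genuine obstacle, as this is fundamentally a compactness argument; the only points requiring care are the well-definedness of the coefficients $\coeff{\mcS}{\beta,z_0}$ (supplied by the holomorphy of $\mcS$ at $z_0\notin\Lambda$) and the explicit solvability of the inner least-squares problem, which hinges on the numerator coefficients being \emph{free} elements of $V$ while the denominator is constrained to a compact sphere. Finally I would emphasize that uniqueness is \emph{not} expected: distinct normalized denominators may realize the same minimal value of $\Phi$, which is why the statement asserts existence only.
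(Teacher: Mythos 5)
Your proof is correct and takes essentially the same route as the paper's: for fixed $Q$ the numerator coefficients are chosen to annihilate all terms of order $\alpha\leq M$, reducing the problem to minimizing the residual functional (the paper's $\bar j_{E,\rho}$, your $\Phi$) over $\PspaceS{N}$, which is compact as the unit sphere in $\C^{N+1}$, so Weierstrass gives a minimizer. The only difference is presentational—you write out the Cauchy product and the explicit formula for the optimal $p_\alpha^\star(Q)$, whereas the paper defines $\bar P(Q)$ directly by its Taylor coefficients.
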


\begin{proof}
We want to show that the minimization problem~\eqref{eq:pade_approximant} admits at least one solution.
Since $P$ has degree $M$, then $\coeff{P(z)}{\alpha,z_0}=0$ for all $\alpha>M$. Hence, we can rewrite $j_{E,\rho}$ as
\begin{align*}
	\jfun{P}{Q}^2
	&=\sum_{\alpha=0}^M\normw{\coeff{Q(z)\mcS(z)-P(z)}{\alpha,z_0}}
		{V}{\sqrt{\Real{z_0}}}^2\rho^{2\alpha}\\
	&\quad+\sum_{\alpha=M+1}^E\normw{\coeff{Q(z)\mcS(z)-P(z)}{\alpha,z_0}}
		{V}{\sqrt{\Real{z_0}}}^2\rho^{2\alpha}\\
	&=\sum_{\alpha=0}^M\normw{\coeff{Q(z)\mcS(z)-P(z)}{\alpha,z_0}}
		{V}{\sqrt{\Real{z_0}}}^2\rho^{2\alpha}\\
	&\quad +\sum_{\alpha=M+1}^E\normw{\coeff{Q(z)\mcS(z)}{\alpha,z_0}}
		{V}{\sqrt{\Real{z_0}}}^2\rho^{2\alpha}.
\end{align*}
Now, let $Q$ be fixed. Taking $P=\bar P(Q)$, where $\bar P(Q)$ satisfies 
$$
\coeff{\bar P(z)}{\alpha,z_0}=\coeff{Q(z)\mcS(z)}{\alpha,z_0}
\quad\forall\ 0\leq\alpha\leq M,
$$ 
problem~\eqref{eq:pade_approximant} can be formulated as a minimization problem in $Q$ only: find $Q\in\PspaceS{N}$ such that
\begin{equation}
\label{eq:min_Q}
\jfunbar{Q}\leq \jfunbar{S}\quad\forall S\in\PspaceS{N},
\end{equation}
where 
\begin{equation}
\label{eq:jbar}
\jfunbar{Q}:=\jfun{\bar P(Q)}{Q}
	= \left(\sum_{\alpha=M+1}^E
	\normw{\coeff{Q(z)\mcS(z)}{\alpha,z_0}}
	{V}{\sqrt{\Real{z_0}}}^2\rho^{2\alpha}\right)^{1/2}.
\end{equation}
Since the functional $\bar j_{E,\rho}$ is continuous and the set $\PspaceS{N}$ is compact (being homeomorphic to the unit sphere in $\C^{N+1}$), $\bar j_{E,\rho}$ has a global minimum on $\PspaceS{N}$, and the minimization problem~\eqref{eq:min_Q} admits at least one solution.
\end{proof}

In the following proposition we express an equivalent formulation of the constrained minimization problem~\eqref{eq:min_Q}.

\begin{proposition}
\label{pr:step2}
The constrained minimization problem~\eqref{eq:min_Q} is equivalent to the identification of the (normalized) eigenvector corresponding to the smallest non-negative eigenvalue of the Hermitian positive-semidefinite matrix $\G\in\C^{(N+1)\times(N+1)}$ with entries
\begin{equation}
\label{eq:gram_matrix}
\left(\G\right)_{i,j}
	=\sum_{\alpha=M+1}^E
	\dualw{ \coeff{\mcS}{\alpha-j,z_0} }{ \coeff{\mcS}{\alpha-i,z_0} }
		\rho^{2\alpha},
	\qquad i,j=0,\ldots,N,
\end{equation}
where $\dualw{\cdot}{\cdot}$ denotes the scalar product that induces the weighted $H^1(D)$-norm $\normw{\cdot}{V}{\sqrt{\Real{z_0}}}$, and the Taylor coefficient of order $\beta$, $\coeff{\mcS}{\beta,z_0}$, is the unique solution of the following Helmholtz equation:
\begin{align}
	\nonumber
	& \int_D\Grad \coeff{\mcS}{\beta,z_0}(\x)\cdot\overline{\Grad v}(\x)\ d\x 
		-z\int_D \coeff{\mcS}{\beta,z_0}(\x)\overline{v}(\x)\ d\x\\
	\label{eq:coeff_S}
	&\quad
		= \int_D \coeff{\mcS}{\beta-1,z_0}\overline{v}(\x)\ d\x
		\quad \forall v\in V,
\end{align}
whereas we set $\coeff{\mcS}{\beta,z_0}=0$, whenever $\beta<0$.
\end{proposition}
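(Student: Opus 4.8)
The plan is to rewrite the objective $\jfunbar{Q}^2$ as a Hermitian quadratic form in the coefficient vector of $Q$, and then to recognize the constrained minimization~\eqref{eq:min_Q} as the minimization of a Rayleigh quotient over the unit sphere, whose solution is classically given by the eigenvector associated with the smallest eigenvalue. First I would write $Q(z)=\sum_{j=0}^N q_j(z-z_0)^j$ and compute the Taylor coefficients of the product $Q\mcS$ via the Cauchy product: since the $\alpha$-th coefficient of a product of two power series is the convolution of their coefficients, and the convention $\coeff{\mcS}{\beta,z_0}=0$ for $\beta<0$ absorbs the out-of-range terms, one obtains
\[
\coeff{Q(z)\mcS(z)}{\alpha,z_0}=\sum_{j=0}^N q_j\,\coeff{\mcS}{\alpha-j,z_0}.
\]
Substituting this into~\eqref{eq:jbar} and expanding each squared weighted norm through the sesquilinear product $\dualw{\cdot}{\cdot}$ yields
\[
\normw{\coeff{Q(z)\mcS(z)}{\alpha,z_0}}{V}{\sqrt{\Real{z_0}}}^2
=\sum_{i,j=0}^N \overline{q_i}\,q_j\,\dualw{\coeff{\mcS}{\alpha-j,z_0}}{\coeff{\mcS}{\alpha-i,z_0}}.
\]

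Next I would multiply by $\rho^{2\alpha}$, sum over $\alpha$ from $M+1$ to $E$, and exchange the two finite summations. This identifies $\jfunbar{Q}^2=\mathbf{q}^{*}\G\,\mathbf{q}$, with $\mathbf{q}=(q_0,\ldots,q_N)^{\top}$ and $\G$ exactly the matrix in~\eqref{eq:gram_matrix}. The constraint $\sum_{i=0}^N\abs{q_i}^2=1$ defining $\PspaceS{N}$ reads $\mathbf{q}^{*}\mathbf{q}=1$, so~\eqref{eq:min_Q} becomes the minimization of $\mathbf{q}^{*}\G\,\mathbf{q}$ over the unit sphere of $\C^{N+1}$.

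The matrix $\G$ is Hermitian, since $\dualw{\cdot}{\cdot}$ is conjugate-symmetric, and positive semidefinite, because
\[
\mathbf{q}^{*}\G\,\mathbf{q}=\sum_{\alpha=M+1}^E \rho^{2\alpha}\,\normw{\sum_{j=0}^N q_j\,\coeff{\mcS}{\alpha-j,z_0}}{V}{\sqrt{\Real{z_0}}}^2\geq 0;
\]
hence all its eigenvalues are real and non-negative. By the variational (Courant--Fischer) characterization of the extreme eigenvalues of a Hermitian matrix, the minimum of $\mathbf{q}^{*}\G\,\mathbf{q}$ subject to $\mathbf{q}^{*}\mathbf{q}=1$ equals the smallest eigenvalue of $\G$ and is attained precisely at a corresponding normalized eigenvector, which gives the claimed equivalence.

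It remains to justify that the Taylor coefficients $\coeff{\mcS}{\beta,z_0}$ solve the recursive problems~\eqref{eq:coeff_S}. This I would obtain by expanding the weak formulation~\eqref{eq:helmholtz_weak} in powers of $(z-z_0)$: writing $z=z_0+(z-z_0)$ makes the $z$-dependence affine, so matching the coefficient of $(z-z_0)^\beta$ on both sides produces, for each $v\in V$, the stated identity with source $\coeff{\mcS}{\beta-1,z_0}$ (and source $f$ for $\beta=0$), together with the convention $\coeff{\mcS}{\beta,z_0}=0$ for $\beta<0$. The only point requiring care is purely bookkeeping, namely keeping the convolution indices and the conjugation convention aligned so that the assembled matrix coincides entry-by-entry with~\eqref{eq:gram_matrix}; no analytical difficulty arises, since once the quadratic form is correctly assembled the statement is standard finite-dimensional linear algebra.
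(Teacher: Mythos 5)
Your proposal is correct and follows essentially the same route as the paper's proof: expand $\coeff{Q\mcS}{\alpha,z_0}$ by the Cauchy product with the zero-padding convention, rewrite $\jfunbar{Q}^2$ as the Hermitian quadratic form $\qq^\star\G\qq$, observe that $\G$ is Hermitian positive-semidefinite and that the constraint is $\norm{\qq}{2}=1$, and invoke the variational characterization of the smallest eigenvalue. Your coefficient-matching justification of~\eqref{eq:coeff_S} is also the same idea as the paper's ``repeated differentiation'' of~\eqref{eq:helmholtz_weak}, which the paper simply delegates to the cited reference.
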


\begin{proof}
Set $q_\alpha:=\coeff{Q}{\alpha,z_0}$ for $\alpha=0,\ldots,N$. 
Since 
$$
\coeff{Q\mcS}{\alpha,z_0}=\sum_{n=0}^\alpha q_n\coeff{\mcS}{\alpha-n,z_0}
=\sum_{n=0}^N q_n\coeff{\mcS}{\alpha-n,z_0}
$$ 
according to our convention that $\coeff{\mcS}{\beta,z_0}=0$ for $\beta<0$, we have
\begin{align*}
	\nonumber
\jfunbar{Q}^2= & \sum_{\alpha=M+1}^E
	\dualw{ \coeff{Q\mcS}{\alpha,z_0} } { \coeff{Q\mcS}{\alpha,z_0} }
	\rho^{2\alpha}\\
	\nonumber
=& \sum_{\alpha=M+1}^E
	\dualw{ \sum_{j=0}^{N} q_j \coeff{\mcS}{\alpha-j,z_0} }
	{ \sum_{i=0}^{N} q_i \coeff{\mcS}{\alpha-i,z_0} }
	\rho^{2\alpha}\\
	\nonumber
=& \sum_{\alpha=M+1}^E \sum_{i,j=0}^{N}
	q_i^* q_j \dualw{ \coeff{\mcS}{\alpha-j,z_0} }{ \coeff{\mcS}{\alpha-i,z_0} }
	\rho^{2\alpha}\\
= & \sum_{i,j=0}^N q_i^* q_j \sum_{\alpha=M+1}^E 
	\dualw{ \coeff{\mcS}{\alpha-j,z_0} }{ \coeff{\mcS}{\alpha-i,z_0} }
	\rho^{2\alpha}\\
= & \qq^\star \G \qq,
\end{align*}
where $\G\in\C^{(N+1)\times(N+1)}$ is defined in~\eqref{eq:gram_matrix}, and $\qq=(q_0,\ldots,q_N)^T$. 
By definition, $\G$ is Hermitian. Moreover, definition~\eqref{eq:jbar} implies that $\G$ is positive-semidefinite, so that all its eigenvalues are real non-negative. 
Finally, observe that the constraint $\sum_{\alpha=0}^N\abs{\coeff{Q}{\alpha,z_0}}^2=1$ is equivalent to the condition $\norm{\qq}{2}=1$. Hence, we conclude that the constrained minimization problem~\eqref{eq:min_Q} is equivalent to the identification of the (normalized) eigenvector corresponding to the smallest eigenvalue $\G$.
Finally, we observe that equation~\eqref{eq:coeff_S} is obtained by repeated differentiation of equation~\eqref{eq:helmholtz_weak}; see~\cite{Bonizzoni2016} for a rigorous derivation.
\end{proof}

The Hermitian matrix $\G$ defined in~\eqref{eq:gram_matrix} is obtained as weighted sum of sub-matrices of the Gram matrix $G\in\C^{(N+1)\times(N+1)}$ associated with the solution map $\mcS$, namely, the matrix with entries $G_{i,j}=\dualw{\coeff{\mcS}{i,z_0}}{\coeff{\mcS}{j,z_0}}$, for $i,j=0,\ldots,N$.
See Figure~\ref{fig:gram_matrix} for a graphical representation.
\begin{figure}[h]
\begin{center}
$G=\left[
\begin{tabular}{cccccc}
	$\dual{\mcS}{\mcS}$ & $\dual{\mcS}{\mcS_1}$ &
		$\dual{\mcS}{\mcS_2}$ & $\ldots$ & &\\
	$\dual{\mcS_1}{\mcS}$ & 	
		\cellcolor{blue!25}$\dual{\mcS_1}{\mcS_1}$ &
		\cellcolor{blue!25}$\dual{\mcS_1}{\mcS_2}$ &
		\cellcolor{blue!25}$\dual{\mcS_1}{\mcS_3}$ & 		
		$\ldots$\\
	$\dual{\mcS_2}{\mcS}$ & 		
		\cellcolor{blue!25}$\dual{\mcS_2}{\mcS_1}$ &
		\cellcolor{blue!25}$\dual{\mcS_2}{\mcS_2}$ &
		\cellcolor{blue!25}$\dual{\mcS_2}{\mcS_3}$ & 		
		$\dual{\mcS_2}{\mcS_4}$ &
		$\ldots$\\
	$\smash\vdots$ & 		
		\cellcolor{blue!25}$\dual{\mcS_3}{\mcS_1}$ &
		\cellcolor{blue!25}$\dual{\mcS_3}{\mcS_2}$ &
		\cellcolor{blue!25}$\dual{\mcS_3}{\mcS_3}$ & 		
		$\dual{\mcS_3}{\mcS_4}$ &
		$\ldots$\\
	& $\smash\vdots$ &
		$\dual{\mcS_4}{\mcS_2}$ &
		$\dual{\mcS_4}{\mcS_3}$ &
		$\dual{\mcS_4}{\mcS_4}$ &
		$\ldots$\\
	&& $\smash\vdots$ & $\smash\vdots$ & $\smash\vdots$ &
\end{tabular}
\right]$\\\vspace{0.2cm}
$\G=\ldots+\rho^6\left[
\begin{tabular}{ccc}
	$\dual{\mcS_3}{\mcS_3}$ & $\dual{\mcS_2}{\mcS_3}$ & $\dual{\mcS_1}{\mcS_3}$\\
	$\dual{\mcS_3}{\mcS_2}$ & $\dual{\mcS_2}{\mcS_2}$ & $\dual{\mcS_1}{\mcS_2}$\\
	$\dual{\mcS_3}{\mcS_1}$ & $\dual{\mcS_2}{\mcS_1}$ & $\dual{\mcS_1}{\mcS_1}$
\end{tabular}
\right]+\ldots$
\end{center}
\caption{Gram matrix (top) associated with the frequency response map $\mcS$. To lighten the notation, we omit both the argument ($z_0$) of the Taylor coefficients $\mcS_\alpha$, and the weight $\sqrt{\Real{z_0}}$ of the scalar product $\dual{\cdot}{\cdot}$. In blue the sub-matrix corresponding to $N=2$ and $\alpha=3$, which provides a contribution to $\G$ (bottom) with weight $\rho^6$. Observe that a transposition with respect to the secondary diagonal is carried out before computing the sum.}
\label{fig:gram_matrix}
\end{figure}

By following the steps performed in the proof of Proposition~\ref{prop:existence_pade}, and applying Proposition~\ref{pr:step2}, we devise Algorithm~\ref{al:pade} for the computation of the LS-Pad\'e approximant.
\begin{algorithm}[h]
\caption{Construction of the LS-Pad\'e approximant}
\label{al:pade}
\begin{algorithmic}[1]
\STATE Fix $z_0\in\C\setminus\Lambda$ with $\Real{z_0}>0$, $\rho\in\R^+$, 
	$M,\ N,\ E\in\N$, with $E\geq M+N$
\STATE Evaluate $\mcS$ in the center $z_0$, by solving problem~\eqref{eq:helmholtz_weak}
\FOR{$\beta=1,\ldots,E$}
	\STATE Compute the Taylor coefficient of $\mcS$ in $z_0$ of order $\beta$, $\coeff{\mcS}{\beta,z_0}$, by solving the problem~\eqref{eq:coeff_S}
\ENDFOR 
\STATE Define the matrix $\G\in\R^{(N+1)\times(N+1)}$ according to~\eqref{eq:gram_matrix}
\label{al:min_Q}
\STATE Compute the (normalized) eigenvector $\mathbf \xi=(\xi_0,\ldots,\xi_N)$ corresponding to the smallest non-negative eigenvalue of the matrix $\G$
\STATE Define the denominator as $\den(z)=\sum_{\alpha=0}^N \xi_\alpha (z-z_0)^\alpha$
\FOR{$\alpha=0,\ldots,N$}
	\STATE Compute the Taylor coefficient of $\mcS\den$ in $z_0$ of order $\alpha$ using the formula $\coeff{\mcS\den}{\alpha,z_0}=\sum_{n=0}^\alpha \xi_n\coeff{\mcS}{\alpha-n,z_0}$
\ENDFOR
\STATE Define the numerator as $\num(z)=\sum_{\alpha=0}^M\coeff{\mcS\den}{\alpha,z_0}(z-z_0)^\alpha$
\STATE Define the single-point LS-Pad\'e approximant as $\pade{\mcS}=\frac{\num}{\den}$
\end{algorithmic}
\end{algorithm}

\begin{remark}
\label{re:choice_rho}
The choice of $\rho$ impacts the algorithm only by determining the weights in the computation of $\G$. Specifically, small (respectively large) values of $\rho$ emphasize the contributions from the sub-matrices located in the top-left (respectively bottom-right) portion of $G$. A fast version of the algorithm, where $\G$ reduces just to the leading term (i.e., for $\rho\rightarrow+\infty$), is currently under investigation (see~\cite{Bonizzoni}).
\end{remark}

\section{Application to a transmission/reflection problem}
\label{sec:transmission_reflection}

We consider the transmission/reflection problem treated in~\cite{Kapita2015}, i.e., the transmission/reflection of a plane wave $e^{i\kappa \mathbf{x}\cdot\mathbf{d}}$ with wavenumber $\kappa$ and direction $\mathbf{d}=(\cos(\theta),\sin(\theta))$, across a fluid-fluid interface. In particular, the considered domain $D=(-1,1)^2$ is divided into two regions with different refractive indices $n_1,\ n_2$; we assume $n_1<n_2$. The Helmholtz problem is the following 
\begin{equation}
\label{eq:helmholtz_transmission}
	-\Delta u - \kappa^2\varepsilon_r^2 u =0,
	\quad\text{ with }
	\varepsilon_r(x_1,x_2)=\left\{\begin{array}{ll}
		n_1 & \text{ if }x_2<0,\\
		n_2 & \text{ if }x_2>0.
	\end{array}\right.
\end{equation}
For any angle $0\leq\theta<\pi/2$, the following function is a solution of equation~\eqref{eq:helmholtz_transmission}:
\begin{equation}
\label{eq:sol_transmission}
	u_{ex}(x_1,x_2)=\left\{\begin{array}{ll}
		T \exp\{i\mathbf{K}\cdot\mathbf{x}\} & \text{ if }x_2>0,\\
		\exp\{ i\kappa n_1 \mathbf{d}\cdot\x\} 
		+R \exp\{ i \kappa n_1 \mathbf{d}\cdot(x_1,-x_2)\} & \text{ if }x_2<0.\\
	\end{array}\right.
\end{equation}
where $\mathbf{K}=(\kappa n_1 d_1 , \kappa\sqrt{n_2^2-(n_1d_1)^2})$, $R=-\frac{K_2-\kappa n_1 d_2}{K_2+\kappa n_1 d_2}$ and $T=1+R$.
We couple the Helmholtz equation~\eqref{eq:helmholtz_transmission} with Dirichlet boundary conditions derived from the exact solution~\eqref{eq:sol_transmission}, i.e., $u|_{\partial D}=u_{ex}|_{\partial D}$. 

Depending on the value of $\theta$ (angle of the incident wave), the solution may exhibit two types of behavior:
\begin{itemize}
	\item if $\theta<\theta_{crit}:=\arccos\left(\frac{n_2}{n_1}\right)$, then $\Imag{K_2}\neq 0$, and $u_{ex}$ decays exponentially for $x_2>0$. Physically, this phenomenon is called \emph{total internal reflection};
	\item if $\theta>\theta_{crit}$, then $\mathbf{d}$ is close to the normal incidence, and the wave is \emph{refracted} at the interface.
\end{itemize}
The two behaviors are depicted in Figure~\ref{fig:transmission_theta_comparison}.

\begin{figure}[htb]
\centering
\includegraphics[width=0.45\textwidth]{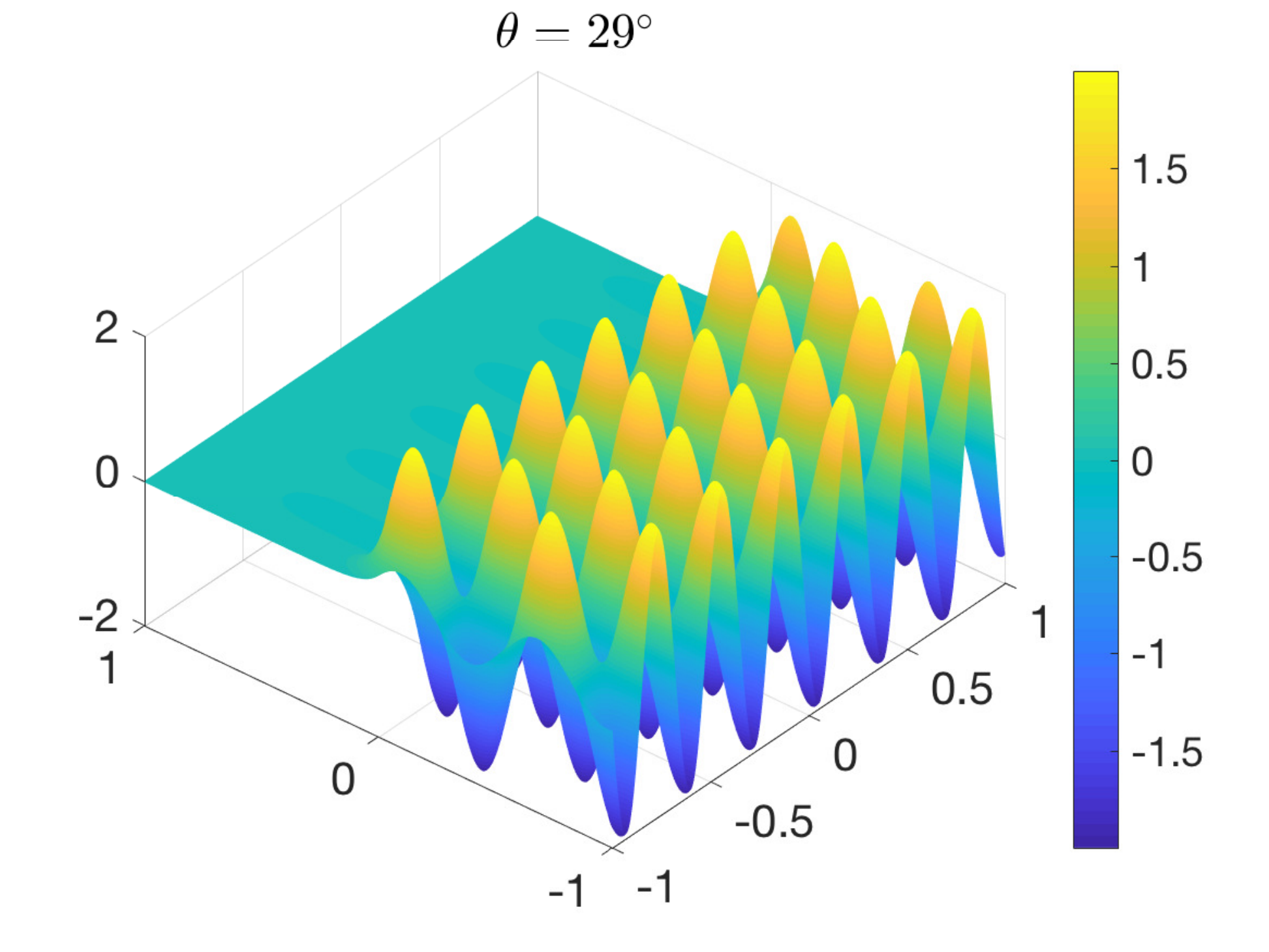}
\includegraphics[width=0.45\textwidth]{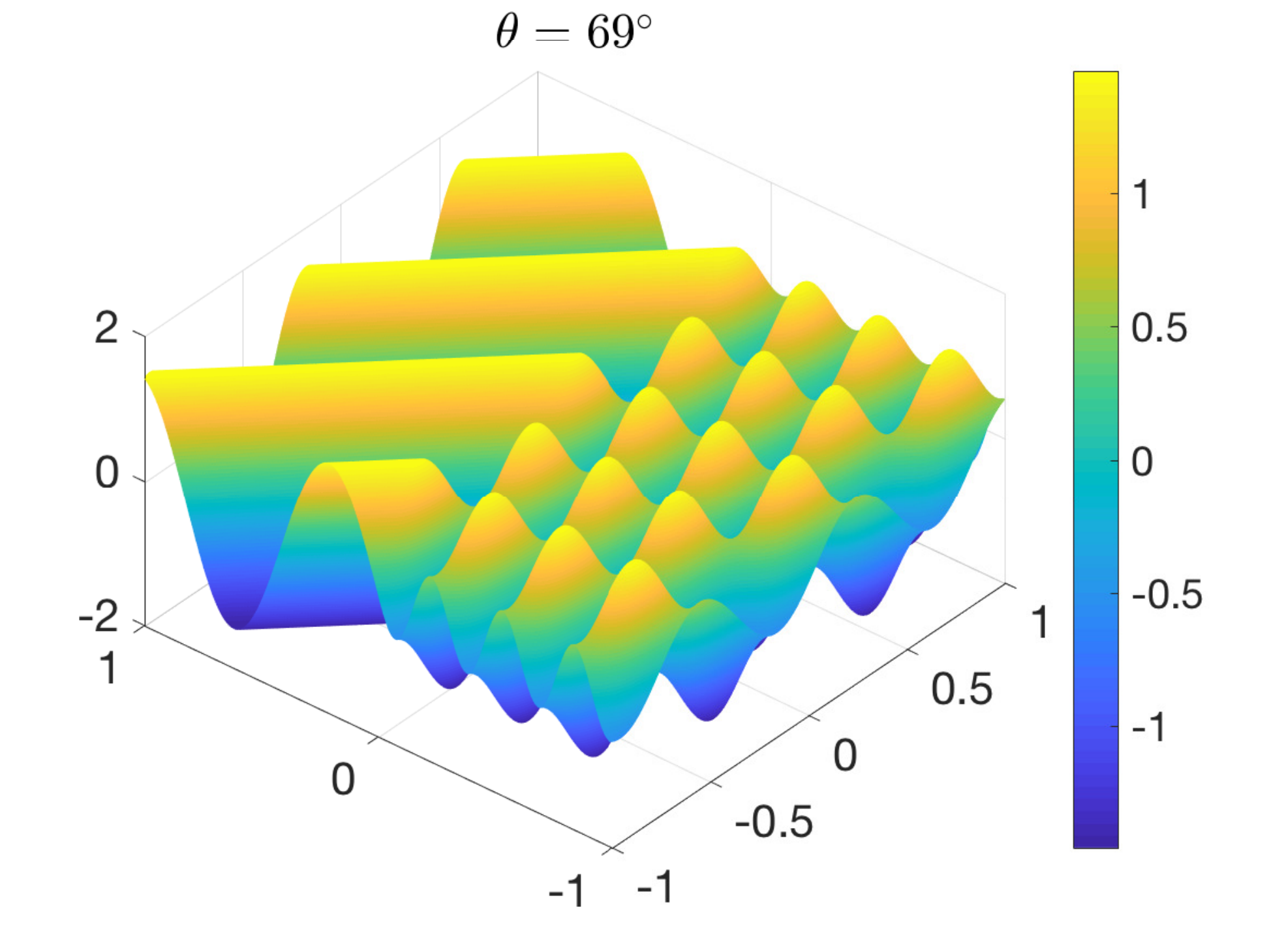}
\caption{Exact solution of the transmission/reflection problem with $n_1=2$, $n_2=1$, $\kappa=11$ and $\theta=29^\circ$ (left), $\theta=69^\circ$ (right).}
\label{fig:transmission_theta_comparison}
\end{figure}

\subsection{Frequency response map}

We are interested in the following boundary value problem:
\begin{problem}[Transmission/Reflection Problem] 
	\label{prob:transmission_reflection_bis}
	The wavenumber $\kappa^2$ ranges in the interval of interest $K=[\kappa^2_{min},\kappa^2_{max}]$, and the Helmholtz equation is endowed with Dirichlet boundary conditions on $\Gamma_D=\partial D$:
	\begin{equation}
	\label{eq:transmission_k_bis}
		\left\{\begin{array}{ll}
		-\Delta u - \kappa^2\varepsilon_r^2 u =0 &\text{ in }D,\\
		u=g_D & \text{ on }\partial D,
		\end{array}\right.
	\end{equation} 
	where $g_D:=u_{ex}|_{\partial D}$, and $u_{ex}$ is given by formula~\eqref{eq:sol_transmission} with $\kappa=11$ and either $\theta=29^\circ$ or $\theta=69^\circ$. 
\end{problem}
A weak formulation of problem~\eqref{eq:transmission_k_bis} with $z\in\C$ replacing $\kappa^2$ reads: find $\mathring u\in V=H^1_0(D)$ such that
\begin{align}
	\nonumber
	& \int_D \Grad \mathring{u}_z(\x)\cdot\overline{\Grad v}(\x) d\x
	- z \int_D \varepsilon_r^2(\x) \mathring{u}_z(\x)\overline v(\x) d\x \\
	\label{eq:transmission_weak_bis}
	&\quad = z \int_D \varepsilon_r^2(\x) w_g(\x)\overline v(\x) d\x
	\quad\forall v\in V,
\end{align}
where $w_g\in H^1(D)$ is the unique harmonic extension of $g_D$, i.e., $\Delta w_g=0$ in $D$ and $w_g|_{\partial D}=g_D$, and $\mathring{u}:=u-w_g$.

By generalizing~\cite[Theorem 2.1]{Bonizzoni2016}, it can be proved that problem~\eqref{eq:transmission_weak_bis} admits a unique solution for all $z\in\C\setminus\Lambda$, $\Lambda$ being the set of eigenvalues of the Laplacian (w.r.t. the weighted $L^2(D)$-norm $\norm{v}{L^2(D),\varepsilon_r}=\norm{\varepsilon_r v}{L^2(D)}$)  with homogeneous Dirichlet boundary conditions. Moreover, with
\begin{equation}
		\label{eq:transmission_alpha_bis}
		0<\alpha<\min_{j:\lambda_j\in\Lambda}\abs{\lambda_j-z},
\end{equation}
the unique solution satisfies the a priori bound
\begin{equation}
		\label{eq:transmission_bound_u_bis}
		\normw{\mathring{u}_z}{V}{\sqrt{\Real{z_0}}}\leq 
		\max\{1,n_1,n_2\}
		\frac{\sqrt{\abs{\lambda_{min}-z} + \abs{\Real{z}} +\Real{z_0} }}{\alpha}
		\abs{z}\norm{w_g}{L^2(D)},
\end{equation}
where $\lambda_{min}:=\min\{\lambda\in\Lambda\}$. 
By triangular inequality, an analogous upper bound on $\normw{u_z}{V}{\sqrt{\Real{z_0}}}$ follows.

Let us denote by $\mcS:\C\rightarrow V:=H^1(D)$ the frequency response map that associates to each complex wavenumber $z$, the function $\mcS(z)=\mathring{u}_z+w_g$, with $\mathring{u}_z$ the weak solution of~\eqref{eq:transmission_weak_bis}.
\begin{proposition}
	\label{prop:transmission_S_meromorphy_bis}
	The frequency response map $\mcS$ is meromorphic in $\C$, having a pole of order one in each $\lambda\in\Lambda$, where $\Lambda$ is the set of eigenvalues of the Laplacian (w.r.t. the weighted $L^2(D)$-norm $\norm{\cdot}{L^2(D),\varepsilon_r}$) with homogeneous Dirichlet boundary conditions.
\end{proposition}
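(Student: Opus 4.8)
The plan is to recast the parametric weak problem~\eqref{eq:transmission_weak_bis} as a resolvent equation for a single fixed compact self-adjoint operator and then invoke the spectral theorem, in the spirit of the proof of~\cite[Theorem 2.1]{Bonizzoni2016}. Since $\Gamma_D=\partial D$, the Dirichlet form $a(u,v):=\int_D\Grad u\cdot\overline{\Grad v}\,d\x$ is, by the Poincar\'e inequality, an inner product on $V=H^1_0(D)$ inducing a norm equivalent to the standard $H^1(D)$-norm; I would equip $V$ with this structure. Because $\varepsilon_r^2\in L^\infty(D)$ is real and uniformly positive, the sesquilinear form $b(u,v):=\int_D\varepsilon_r^2\,u\,\overline{v}\,d\x$ is bounded on $V$, and the Riesz representation $a(Bu,v)=b(u,v)$ defines a bounded operator $B:V\to V$. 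A direct computation using the reality of $\varepsilon_r^2$ gives $a(Bu,v)=a(u,Bv)$, so $B$ is self-adjoint, while $a(Bu,u)=\int_D\varepsilon_r^2\abs{u}^2\,d\x>0$ for $u\neq0$ shows it is positive definite; the compact embedding $H^1_0(D)\hookrightarrow L^2(D)$ (Rellich--Kondrachov) then makes $B$ compact.

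Next I would rewrite the problem in operator form. Introducing $g^\ast\in V$ through $a(g^\ast,v)=b(w_g,v)$ (a bounded functional on $V$, since $w_g\in H^1(D)\subset L^2(D)$), equation~\eqref{eq:transmission_weak_bis} reads $(I-zB)\mathring u_z=z\,g^\ast$ in $V$. Hence, at every $z$ for which $I-zB$ is invertible, $\mathring u_z=z\,(I-zB)^{-1}g^\ast$ and $\mcS(z)=\mathring u_z+w_g$. Let $\{\mu_k\}$ be the positive eigenvalues of $B$ (decreasing to $0$) with $a$-orthonormal eigenfunctions and associated orthogonal projections $P_k$; completeness of the eigenbasis (B injective, compact, self-adjoint) yields the spectral representation $(I-zB)^{-1}=I+\sum_k\frac{z\mu_k}{1-z\mu_k}P_k$, which is a $\mathcal B(V)$-valued meromorphic function on $\C$ whose only singularities are simple poles at the points $z=\mu_k^{-1}$. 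By construction these reciprocals are exactly the eigenvalues of the generalized problem $a(u,v)=\lambda\,b(u,v)$, i.e.\ of the Dirichlet Laplacian with respect to the $\varepsilon_r$-weighted $L^2(D)$-norm, which is the set $\Lambda$. Multiplying by the entire factor $z$ and the fixed vector $g^\ast$, and adding the $z$-independent term $w_g$, preserves meromorphy and the location of the singularities; since $0\notin\Lambda$ (all $\lambda$ are positive), the factor $z$ cancels no pole. Thus $\mcS$ is meromorphic on $\C$ with poles contained in $\Lambda$, each of order at most one.

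The point deserving care, and the main technical step, is confirming that the poles are genuinely of order one rather than higher. This is precisely where self-adjointness is used: for a compact self-adjoint $B$ every spectral value is semisimple, so the Laurent expansion of $(I-zB)^{-1}$ at $z=\mu_k^{-1}$ terminates at first order, with residue of $\mathring u_z$ proportional to $P_k g^\ast$. Establishing this compact self-adjoint structure in the presence of the variable coefficient $\varepsilon_r^2$ and of the lifted, $z$-linear datum $z\,g^\ast$ is the delicate bookkeeping relative to~\cite{Bonizzoni2016}; the reality and positivity of $\varepsilon_r^2$ make the self-adjointness computation go through, and the affine-in-$z$ right-hand side is handled by the explicit factorization above. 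Finally, that each $\lambda\in\Lambda$ is an honest (non-removable) pole reduces to $P_k g^\ast\neq0$, i.e.\ that the harmonic lifting $w_g$ is not $b$-orthogonal to the corresponding eigenspace; this is the only place where the specific boundary datum $g_D$ enters the argument.
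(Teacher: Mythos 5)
Your proof is correct, and it reaches the same destination as the paper's by a more operator-theoretic route. The paper argues concretely: it takes the eigenpairs $\{(\lambda_j,\varphi_j)\}$ of the weighted Dirichlet eigenproblem $-\Delta\varphi_j=\lambda_j\varepsilon_r^2\varphi_j$, orthonormal with respect to $\duale{\cdot}{\cdot}{\varepsilon_r}$, inserts the expansion $\mathring u(z)=\sum_j \mathring u_j(z)\varphi_j$ directly into~\eqref{eq:transmission_weak_bis}, and obtains the explicit coefficients $\mathring u_j(z)=\frac{z\,w_j}{\lambda_j-z}$ of~\eqref{eq:trasmission_u_coeff_bis}, from which meromorphy and the order-one poles are read off the resulting series~\eqref{eq:S_eig_bis}. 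You package the same spectral information into the solution operator $B$ with $a(Bu,v)=b(u,v)$: compactness, self-adjointness and positivity of $B$ produce the same eigenpairs (with $\mu_k=1/\lambda_k$), and your resolvent formula is exactly the paper's expansion in disguise, since $z(I-zB)^{-1}g^\ast=\sum_j \frac{z\,w_j}{\lambda_j-z}\varphi_j$. What your formulation buys: meromorphy and the semisimplicity of the poles (hence order at most one) follow from general facts about compact self-adjoint operators, the relevant series converges in operator norm essentially for free, and well-posedness for $z\notin\Lambda$ comes out as a byproduct; what the paper's buys: the explicit residues, proportional to $\lambda_j w_j\varphi_j$, in terms of the Fourier coefficients of the lifting $w_g$, at the cost of having to justify (it merely asserts) convergence of the eigenfunction series in the weighted $H^1(D)$-norm. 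One further point is actually in your favor: you state explicitly that $\lambda\in\Lambda$ is a genuine, non-removable pole only if $P_kg^\ast\neq0$, i.e., only if $w_j\neq 0$ for some eigenfunction attached to $\lambda$; the paper's proof needs the same non-degeneracy of the boundary datum but passes over it silently, so strictly speaking both arguments prove a pole of order \emph{at most} one at each $\lambda\in\Lambda$, with a true pole wherever the lifted datum is not $\duale{\cdot}{\cdot}{\varepsilon_r}$-orthogonal to the corresponding eigenspace.
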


\begin{proof}
	We denote with $\duale{\cdot}{\cdot}{\varepsilon_r}$ the inner product which induces the $L^2(D)$ weighted norm $\norm{\cdot}{L^2(D),\varepsilon_r}$, i.e.,  $\duale{v_1}{v_2}{\varepsilon_r}:=\int_D \varepsilon_r^2(\x) v_1(\x) v_2(\x) d\x$. 
	Let $\{\varphi_j\}$ be the set of eigenfunctions of the Laplacian (with homogeneous Dirichlet boundary conditions) orthonormal with respect to the inner product $\duale{\cdot}{\cdot}{\varepsilon_r}$, and let $\{\lambda_j\}$ be the corresponding eigenvalues, i.e., $-\Delta\varphi_j=\lambda_j\varepsilon^2_r\varphi_j$ in $D$ and $\varphi_j|_{\partial D}=0$ (see, e.g., \cite[Theorem 2.36]{McLean2000}).
Inserting into equation~\eqref{eq:transmission_weak_bis} the eigenfunction expansion $\mathring{u}(z,\x)=\sum_{j}\mathring{u}_j(z)\varphi_j(\x)$, where $\mathring{u}_j(z):=\duale{\mathring{u}(z)}{\varphi_j}{\varepsilon_r}$, and denoting $w_j:=\duale{w_g}{\varphi_j}{\varepsilon_r}$, we derive
\begin{equation}
	\label{eq:trasmission_u_coeff_bis}
	\mathring{u}_j(z)=\frac{z\ w_j}{\lambda_j-z}.
\end{equation}
The eigenfunction expansion of the frequency response map is then given by
\begin{align}
	\nonumber
	\mcS(z)
	& =\mathring{u}(z,\x)+w_g(\x)
	=\sum_j \mathring{u}_j(z)\varphi_j(\x) + w_g(\x)\\
	\label{eq:S_eig_bis}
	& \stackrel{\eqref{eq:trasmission_u_coeff_bis}}{=}
		\sum_j \frac{zw_j}{\lambda_j-z}\varphi_j(\x) + w_g(\x).
\end{align}
Since the series converges in the (weighted) $H^1(D)$-norm, then~\eqref{eq:S_eig_bis} directly implies that $\mcS$ is meromorphic in $\C$, and each $\lambda\in\Lambda$ is a pole of order one for $\mcS$.
\end{proof}

\subsection{LS-Pad\'e approximant of the frequency response map}

Since the frequency response map is meromorphic, it is appropriate to use the LS-Pad\'e technology to catch the singularities of $\mcS$, and provide sharp approximations of $\mcS(z)$, when $z$ is close to the center $z_0$. We apply Algorithm~\ref{al:pade}, and  compute the coefficients of the denominator as the entries of the eigenvector corresponding to the minimal eigenvalue of the Gram matrix~\eqref{eq:gram_matrix}. 
The Taylor coefficient of order $\beta\geq 1$, $\coeff{\mcS}{\beta,z_0}=\frac{1}{\beta!}\derzz{\beta}{\mcS}|_{z=z_0}\in H^1_0(D)$, satisfies
\begin{align}
	\nonumber
	& \int_D \Grad \coeff{\mcS}{\beta,z_0}(\x)\cdot\overline{\Grad v}(\x) d\x
			- z_0 \int_D \varepsilon_r^2(\x) \coeff{\mcS}{\beta,z_0}(\x)\overline v(\x) d\x\\
	\label{eq:transmission_beta}
	& = \int_D \varepsilon_r^2(\x) \coeff{\mcS}{\beta-1,z_0}(\x)\overline v(\x) d\x
	\quad\forall v\in H^1_0(D).
\end{align}
Problem~\eqref{eq:transmission_beta} admits a unique solution for all $z\in\C\setminus\Lambda$, since the PDE operator is the same as in~\eqref{eq:transmission_weak_bis} and the right-hand side is a bounded linear form.



Let $K=[3,12]$ be the interval of interest and $\theta=29^\circ$. In Figure~\ref{fig:transmission_pade_norm}, the $H^1(D)$-weighted norm of the $\mathbb P^2$ finite element approximation of $\mcS$, $\mcS_h$, is compared with the norm of its LS-Pad\'e approximant $\mcS_{h,P}$ centered in $z_0=7.5+0.5i$, for various degrees $(M,N)$.
We have empirically observed (see Figure~\ref{fig:transmission_error}) that the LS-Pad\'e approximation delivers a better accuracy than that predicted in~\eqref{eq:pade_approx_S}:
\begin{equation} 
	\label{eq:numerical_rate}
	\normw{\mcS_h(z)-\padeh{\mcS}(z)}{V}{\sqrt{\Real{z_0}}}
	\sim\left(\frac{\abs{z_0-z}}{\abs{z_0-\lambda_{N+1}}}\right)^{M+1},
\end{equation}
where $\{\lambda_j\}_j$ are the elements of $\Lambda$ ordered according to: $\abs{\lambda_1-z_0}<\abs{\lambda_2-z_0}<\ldots$. 
We refer to~\cite{Bonizzoni} for a formal derivation of~\eqref{eq:numerical_rate}, where $\pade{\mcS}$ is computed by a fast version of Algorithm~\ref{al:pade}.

\begin{figure}[h]
\centering
\includegraphics[width=0.45\textwidth]{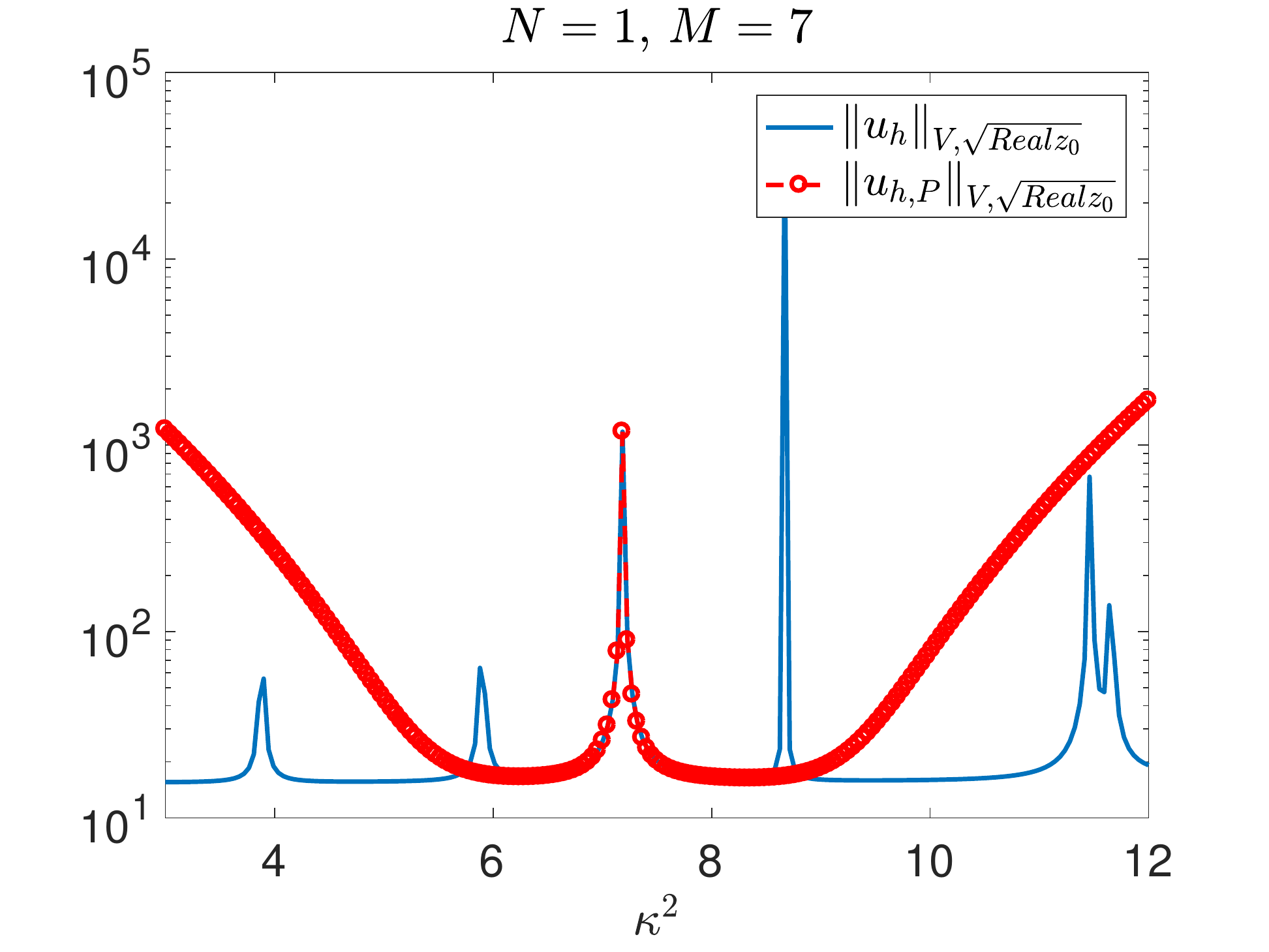}
\includegraphics[width=0.45\textwidth]{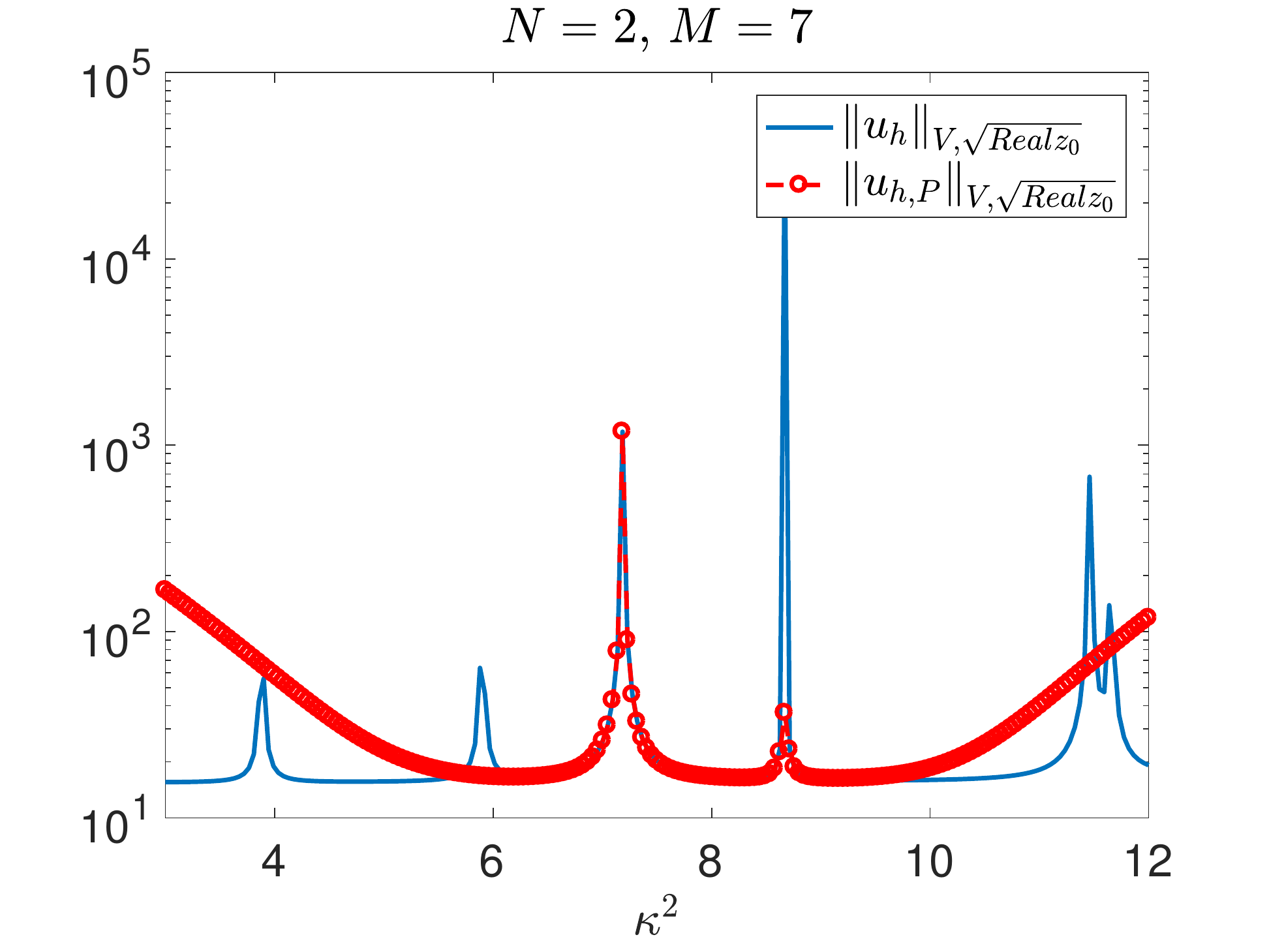}
\includegraphics[width=0.45\textwidth]{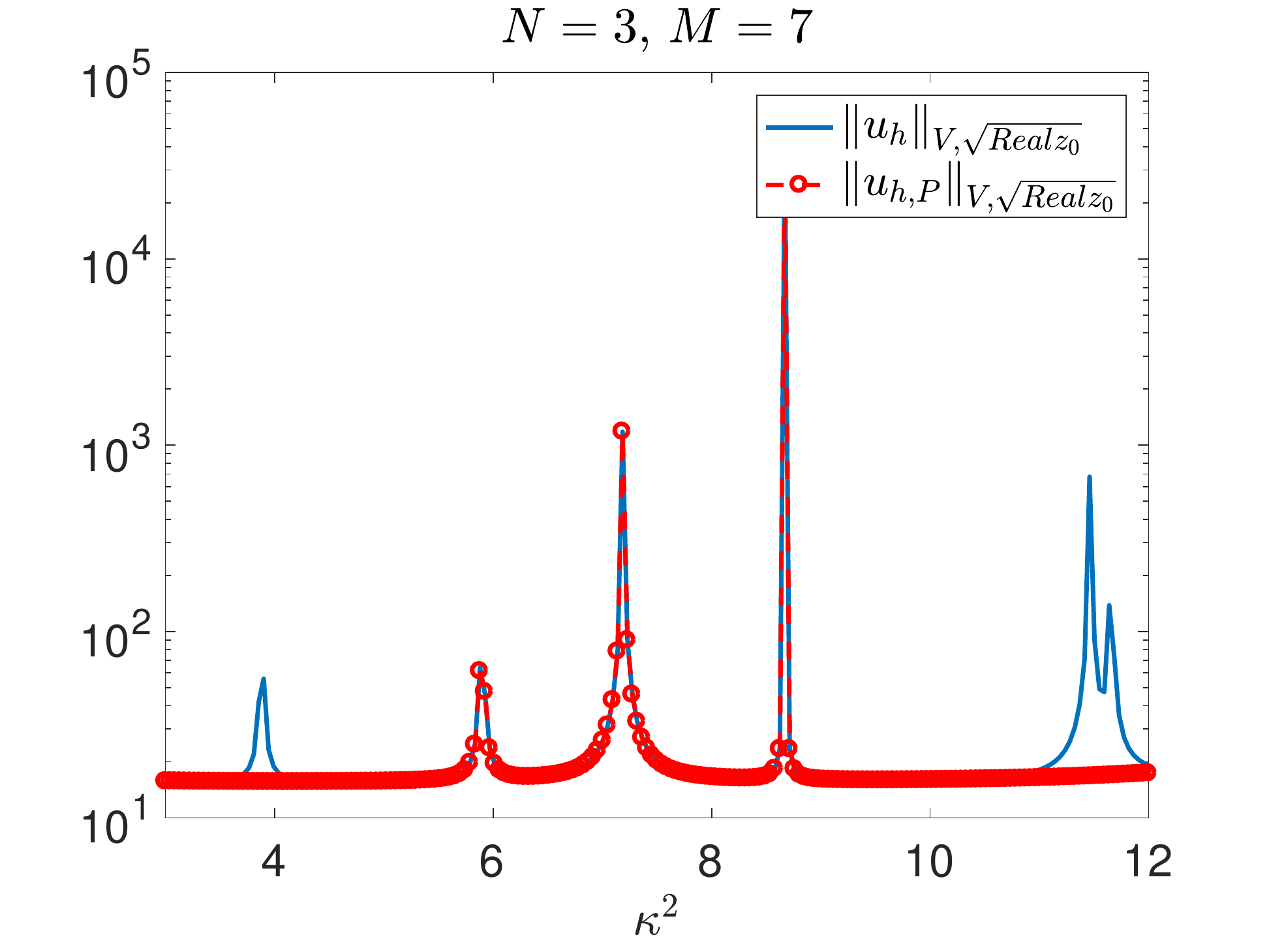}
\caption{Comparison between the $H^1(D)$-weighted norm of $\mcS_h$ (with $\theta=29^\circ$) and of its LS-Pad\'e approximant $\mcS_{h,P}$ centered in $z_0=7.5+0.5i$.}
\label{fig:transmission_pade_norm}
\end{figure}

\begin{figure}[h]
\centering
\includegraphics[width=0.6\textwidth]{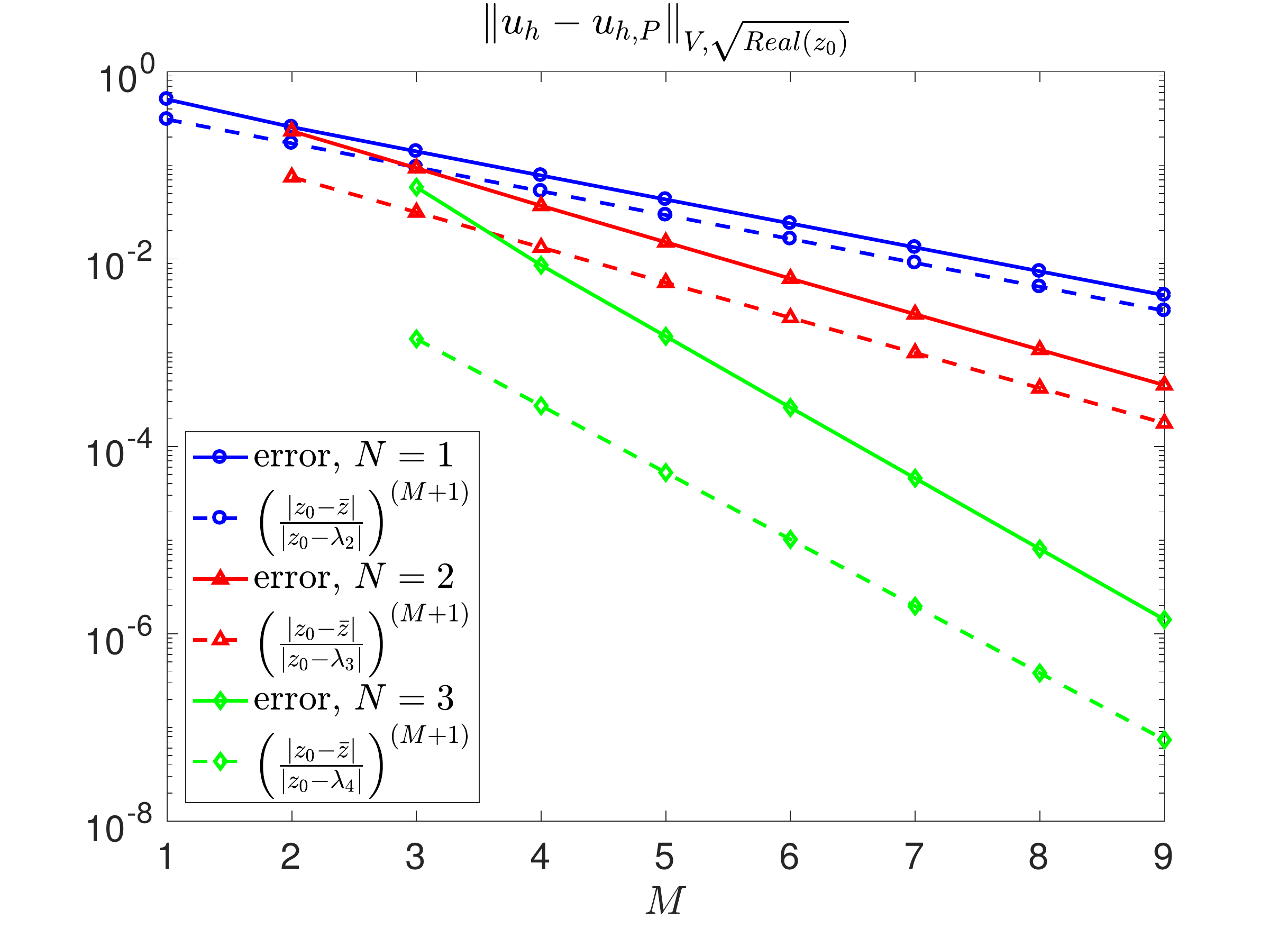}
\caption{LS-Pad\'e approximation error, compared with the heuristic slope $\left(\frac{\abs{z_0-\bar z}}{\abs{z_0-\lambda_{N+1}}}\right)^{M+1}$ for $\bar z=8$.}
\label{fig:transmission_error}
\end{figure}

\section{Application to a scattering problem}
\label{sec:scattering_problem}

In this section, we consider the scattering of an acoustic wave at a scatter occupying the domain $\ball{(0,0)}{0.5}\subset\R^2$. The incident wave $u^i$ is the time-harmonic plane wave traveling along the direction $\d=(\cos(\theta),\sin(\theta))$ with wavenumber $k$, i.e., $u^i=e^{ik\d\cdot\x}$. The total field $u$, given by the sum of the incident wave $u^i$ with the scattered wave $u^s$, satisfies the following boundary value problem in the infinite domain $\R^2\setminus\ball{(0,0)}{0.5}\subset\R^2$
\begin{equation}
	\label{eq:total_field}
	\left\{\begin{array}{ll}
		-\Delta u - k^2 u = 0 &\text{ in }\R^2\setminus \overline{\ball{(0,0)}{0.5}\subset\R^2},\\
		u=0 & \text{ on }\Gamma_D:=\partial\ball{(0,0)}{0.5}\subset\R^2,\\
		\lim_{\abs{\x}\rightarrow \infty} \abs{x}^{1/2}\left(
		\frac{\partial u^s(\x)}{\partial\abs{x}}-iku^s(\x)\right)=0
	\end{array}
	\right.
\end{equation}
The finite element approximation of problem~\eqref{eq:total_field} entails the truncation of the unbounded domain $\R^2\setminus \overline{\ball{(0,0)}{0.5}\subset\R^2}$ into the bounded domain 
$$
D:=\left([-2,2]\times[-2,2]\right)\setminus\overline{\ball{0}{0.5}},
$$ 
whose outer boundary will be denoted as $\Gamma_R$. Approximating the Sommerfeld radiation condition at infinity in problem~\eqref{eq:total_field} by a first order absorbing boundary condition, we write the following parametric problem:
\begin{problem}[Scattering Problem]
The wavenumber $k^2$ ranges in the interval of interest $K:=[k^2_{min},k^2_{max}]\subset\R^+$, $\mathbf n$ is the outgoing normal vector field to $\Gamma_R$, and $g_R:=\frac{\partial u^i}{\partial \mathbf n} - iku^i$ is the impedance trace of the incoming wave $u^i$. We consider the Helmholtz boundary value problem 
\begin{equation}
	\label{eq:total_field_D}
	\left\{\begin{array}{ll}
		-\Delta u - k^2 u = 0 &\text{ in }D,\\
		u=0 & \text{ on }\Gamma_D,\\
		\frac{\partial u}{\partial \mathbf n} - iku = g_R & \text{ on }\Gamma_R.
	\end{array}
	\right.
\end{equation}
\end{problem}

\subsection{Regularity of the frequency response map}

We extend problem~\eqref{eq:total_field_D} to complex wavenumbers. Given a complex wavenumber $z\in\C$, we introduce the incident plane wave $u^i=e^{iz\d\cdot\x}$ and its impedance trace $g_z:=\frac{\partial u^i}{\partial \mathbf n} - izu^i$, and we define the frequency response map $\mcS:z\mapsto\mcS(z):=u_z\in V:=H^1_{\Gamma_D}(D)$, where $u_z$ satisfies
\begin{align}
	\label{eq:total_field_weak}
	&\int_D\Grad u_z(\x)\cdot\overline{\Grad v}(\x) d\x
	-z^2 \int_D u_z(\x)\overline{v}(\x) d\x
	-iz \int_{\Gamma_R} u_z(\x)\overline{v}(\x) ds\\
	\nonumber
	&\quad=\int_{\Gamma_R} g_z(\x)\overline{v}(\x) ds
	\quad\forall\ v\in V.
\end{align} 
If $z\in\R$, problem~\eqref{eq:total_field_weak} admits a unique solution (see, e.g., \cite{Hiptmair2014}), which implies that the frequency response map is well-defined on $\R$. The following Theorem extends this result to the complex half plane $\left\{z\in\C:\ \Imag{z}\geq 0\right\}$.
Since the wavenumber in~\eqref{eq:total_field_weak} is square of the parameter $z$, we will endow the Hilbert space $V$ with the weighted $H^1(D)$-norm, with weight $w=\Real{z_0}$ (and not $w=\sqrt{\Real{z_0}}$, as was done before).

\begin{theorem}
	\label{thm:scattering_wp}
	Problem~\eqref{eq:total_field_weak} admits a unique solution in all compact subsets of 
	\begin{equation}
		\label{eq:C+}
		\C^+:=\left\{z\in\C:\ \Imag{z}\geq 0\right\}.
	\end{equation}
\end{theorem}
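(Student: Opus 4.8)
The plan is to combine a Fredholm-alternative argument for existence with a direct energy identity (reinforced by a unique-continuation step) for uniqueness. First I would introduce on $V=H^1_{\Gamma_D}(D)$ the coercive, bounded sesquilinear form
\[
	a_0(u,v)=\int_D\Grad u\cdot\overline{\Grad v}\,d\x+\gamma\int_D u\overline v\,d\x,
	\qquad \gamma>0,
\]
whose induced norm is equivalent to the weighted $H^1(D)$-norm $\normw{\cdot}{V}{\Real{z_0}}$ used in this section. Writing the form of~\eqref{eq:total_field_weak} as
\[
	a_z(u,v)=a_0(u,v)-(z^2+\gamma)\int_D u\overline v\,d\x-iz\int_{\Gamma_R}u\overline v\,ds,
\]
the two trailing terms define compact operators on $V$: the volume term because the embedding $V\hookrightarrow L^2(D)$ is compact (Rellich, $D$ being a bounded Lipschitz domain), and the boundary term because the trace $V\to L^2(\Gamma_R)$ factors through the compact embedding $H^{1/2}(\Gamma_R)\hookrightarrow L^2(\Gamma_R)$. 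Thus $a_z$ is a compact perturbation of the coercive form $a_0$, and by the Fredholm alternative well-posedness at a fixed $z$ reduces to injectivity.

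Next I would prove injectivity for every $z\in\C^+$. Testing the homogeneous equation with $v=u_z$ gives
\[
	\int_D|\Grad u_z|^2\,d\x-z^2\int_D|u_z|^2\,d\x-iz\int_{\Gamma_R}|u_z|^2\,ds=0.
\]
Writing $z=a+bi$ with $b=\Imag{z}\geq0$ and separating real and imaginary parts, the imaginary part reads $a\bigl(2b\int_D|u_z|^2\,d\x+\int_{\Gamma_R}|u_z|^2\,ds\bigr)=0$. If $a=\Real{z}\neq0$ and $b>0$, both nonnegative contributions must vanish, so $u_z=0$; if $a=0$ (hence $z=bi$) the identity collapses to $\int_D|\Grad u_z|^2\,d\x+b^2\int_D|u_z|^2\,d\x+b\int_{\Gamma_R}|u_z|^2\,ds=0$, a sum of nonnegative terms, whence $\Grad u_z=0$ and $u_z=0$ by the homogeneous Dirichlet condition on $\Gamma_D$.

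The delicate case—and the main obstacle—is real $z\neq0$, i.e.\ the boundary $\partial\C^+$, where coercivity is lost. There the imaginary part still forces $\int_{\Gamma_R}|u_z|^2\,ds=0$, so $u_z=0$ on $\Gamma_R$; inserting this into the Robin condition $\Grad u_z\cdot\mathbf n=iz\,u_z$ shows that the Neumann trace of $u_z$ vanishes on $\Gamma_R$ as well. Hence $u_z$ solves $\Delta u_z+z^2u_z=0$ with vanishing Cauchy data on the piecewise flat (real-analytic) open piece $\Gamma_R$, and by Holmgren's uniqueness theorem together with the interior analyticity of Helmholtz solutions, $u_z\equiv0$ on the connected domain $D$. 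This closes injectivity on all of $\C^+$ and, with the Fredholm step, yields existence and uniqueness for each $z\in\C^+$.

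Finally, to obtain the uniform statement on an arbitrary compact set $\mathcal K\subset\C^+$, I would argue by contradiction on the \emph{inf-sup} constants: if they were not bounded away from zero, there would exist $z_n\in\mathcal K$ and $u_n\in V$ with $\normw{u_n}{V}{\Real{z_0}}=1$ and $\sup_{0\neq v\in V}|a_{z_n}(u_n,v)|/\normw{v}{V}{\Real{z_0}}\to0$. Passing to a subsequence with $z_n\to z^\star\in\mathcal K$ and $u_n\rightharpoonup u$ weakly in $V$, the compactness of the two perturbation terms upgrades weak to strong convergence, giving $\normw{u}{V}{\Real{z_0}}=1$ and $a_{z^\star}(u,\cdot)=0$, which contradicts the injectivity just established at $z^\star\in\C^+$. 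This produces a uniform \emph{inf-sup} lower bound on $\mathcal K$, hence uniform well-posedness and a uniform a priori estimate for the solution map $\mcS$ over $\mathcal K$.
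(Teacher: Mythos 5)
Your proof is correct, and its skeleton---a compact-perturbation/Fredholm reduction to injectivity, followed by splitting the energy identity $B_z(u,u)=0$ into real and imaginary parts---is the same as the paper's, which verifies a G\aa rding inequality plus boundedness, invokes \cite[Theorem 4.11]{McLean2000}, and then shows triviality of the homogeneous adjoint problem by the same real/imaginary-part computation (working with the direct rather than the adjoint problem is immaterial once the Fredholm alternative applies). You diverge in exactly the two places where the paper is least self-contained, and in both you do more. First, for real $z\neq 0$ (the boundary of $\C^+$, where coercivity is lost) the paper does not argue at all: it explicitly restricts its uniqueness computation to $\Imag{z}>0$ and cites \cite{Hiptmair2014} for $\Imag{z}=0$. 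You instead give the classical impedance-problem argument: the imaginary part forces $u=0$ on $\Gamma_R$, the homogeneous impedance condition then annihilates the Neumann trace, and vanishing Cauchy data on the open piece $\Gamma_R$ combined with Holmgren/unique continuation yields $u\equiv 0$ on the connected domain $D$. This makes the theorem self-contained (modulo standard unique-continuation results for $H^1$ solutions, which one should handle with the usual extension-by-zero argument). Second, you add a compactness/contradiction argument giving a uniform inf-sup constant on any compact $\mathcal K\subset\C^+$; the paper only proves well-posedness pointwise in $z$ and never addresses the uniformity suggested by the phrase ``in all compact subsets'', so your final step delivers a slightly stronger---and arguably the intended---form of the statement, including a uniform a priori bound for $\mcS$ on $\mathcal K$. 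The trade-off is the expected one: the paper's proof is shorter because it outsources the hardest case to the literature, while yours is longer but stands on its own.
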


\begin{proof}
Given $z\in\C$, we introduce the bilinear and linear forms which define problem~\eqref{eq:total_field_weak}:
\begin{align}
	\label{eq:scattering_B}
	B_z(u,v)& :=\int_D\Grad u_z(\x)\cdot\overline{\Grad v}(\x) d\x
	-z^2 \int_D u_z(\x)\overline{v}(\x) d\x
	-iz \int_{\Gamma_R} u_z(\x)\overline{v}(\x) ds,\\
	\label{eq:scattering_L}
	L_z(v)& :=\int_{\Gamma_R} g_z(\x)\overline{v}(\x) ds.
\end{align}
We first show that either the coercivity or the G\aa rding inequality (see~\cite{McLean2000}) holds, provided that $\Imag{z}$ is non-negative. For the bilinear form in~\eqref{eq:scattering_B}, we have
\begin{align*}
	\Real{B_z(u,u)}
	& =  \norm{\Grad u}{L^2(D)}^2
		-(\Real{z}^2-\Imag{z}^2) \norm{u}{L^2(D)}^2 
		+\Imag{z} \norm{u}{L^2(\Gamma_R)}^2 \\
	& \geq \norm{\Grad u}{L^2(D)}^2
		-(\Real{z}^2-\Imag{z}^2) \norm{u}{L^2(D)}^2 \\
	& = \normw{u}{V}{\Real{z_0}}^2
		-(\Real{z}^2-\Imag{z}^2+\Real{z_0}^2) \norm{u}{L^2(D)}^2.
\end{align*}
If $C:=\Real{z}^2-\Imag{z}^2+\Real{z_0}^2\leq 0$, then $B(\cdot,\cdot)$ is coercive, whereas if $C>0$, then $B_z(\cdot,\cdot)$ satisfies the G\aa rding inequality. 

The bilinear form~\eqref{eq:scattering_B} is bounded, with constant $C=\max\left\{1,\frac{\abs{z}^2}{\Real{z_0}},
		\frac{\abs{z}C_{tr}^2}{\Real{z_0}}\right\}$. 
Indeed, using the trace inequality
\begin{equation*}
	\norm{u}{L^2(\Gamma_R)}\leq C_{tr}\norm{u}{H^1(D)},
\end{equation*}
we get
\begin{align*}
	\abs{B_z(u,v)}
	& \leq \norm{\Grad u}{L^2(D)}\norm{\Grad v}{L^2(D)}
		+\abs{z}^2\norm{u}{L^2(D)}\norm{v}{L^2(D)}
		+\abs{z}\norm{u}{L^2(\Gamma_R)}\norm{v}{L^2(\Gamma_R)}\\
	& \leq \norm{\Grad u}{L^2(D)}\norm{\Grad v}{L^2(D)}
		+\abs{z}^2\norm{u}{L^2(D)}\norm{v}{L^2(D)}
		+\abs{z} C_{tr}^2 \norm{u}{H^1(D)}\norm{v}{H^1(D)}\\
	& \leq \norm{\Grad u}{L^2(D)}\norm{\Grad v}{L^2(D)}
		+\frac{\abs{z}^2\Real{z_0}^2}{\Real{z_0}^2}\norm{u}{L^2(D)}\norm{v}{L^2(D)}\\
	&\quad+\abs{z} C_{tr}^2 \max\left\{1,\frac{1}{\Real{z_0}^2}\right\}
			\normw{u}{V}{\Real{z_0}}\norm{v}{\Real{z_0}}\\
	& \leq \max\left\{1,\frac{\abs{z}^2}{\Real{z_0}^2},
		\frac{\abs{z}C_{tr}^2}{\Real{z_0}^2}\right\}
		\normw{u}{V}{\Real{z_0}}\norm{v}{\Real{z_0}}.
\end{align*}
Moreover, the linear functional~\eqref{eq:scattering_L} is bounded, with constant
$$
C=C_{tr}^2\max\left\{1,\frac{1}{\Real{z_0}^2}\right\}\normw{g_z}{V}{\Real{z_0}}.
$$
Problem~\eqref{eq:total_field_weak} admits a unique solution (continuously dependent on the data) if and only if its homogeneous adjoint problem admits only trivial solutions: see~\cite[Theorem 4.11]{McLean2000}. We consider the case $\Imag{z}> 0$, and we refer to~\cite{Hiptmair2014} for $\Imag{z}=0$.
The bilinear form associated with the adjoint problem with $g_z=0$ reads:
\begin{equation*}
	B^*_z(\varphi,v):=\overline{B_z(v,\varphi)}=
	\int_D\Grad \varphi(\x)\cdot\overline{\Grad v}(\x) d\x
	-\overline{z}^2 \int_D \varphi(\x)\overline{v}(\x) d\x
	- \overline{iz} \int_{\Gamma_R} \varphi(\x)\overline{v}(\x) ds,
\end{equation*}
and the condition $B^*_z(u,u)=0$ is equivalent to
\begin{equation*}
	\left\{\begin{array}{l}
	\Real{B^*_z(u,u)}=\norm{\Grad u}{L^2(D)}^2 
		- (\Real{z}^2-\Imag{z}^2)\norm{u}{L^2(D)}^2 
		+ \Imag{z}\norm{u}{L^2(\Gamma_R)}^2=0\\
	\Imag{B^*_z(u,u)} = \Real{z}\left(2\Imag{z}\norm{u}{L^2(D)}^2
		+\norm{u}{L^2(\Gamma_R)}^2 \right)=0
	\end{array}\right.
\end{equation*}
If $\Real{z}\neq 0$ and $\Imag{z}> 0$, then $\Imag{B^*_z(u,u)}=0$ is equivalent to $\norm{u}{L^2(D)}=\norm{u}{L^2(\Gamma_R)}=0$, that is, $u=0$ in $D$,
whereas, if $\Real{z}=0$ and $\Imag{z}>0$, then $\Real{B^*_z(u,u)}=0$ implies $\norm{\Grad u}{L^2(D)}=\norm{u}{L^2(D)}=\norm{u}{L^2(\Gamma_R)}=0$, hence $u=0$.
\end{proof}

We recall here the following theorem, see~\cite[Theorem 1]{Steinberg1968}, which will be used in the proof of Proposition~\ref{prop:scattering_mer}.
\begin{theorem}
	\label{thm:steinberg}
	Let $B$ be an open and connected subset of the complex plane.
	If $\{T(z)\}_{z\in B}$ is an analytic family of compact operators defined on a given Banach space, then either $(I-T(z))$ is nowhere invertible in $B$ or $(I-T(z))^{-1}$ is meromorphic in $B$.
\end{theorem}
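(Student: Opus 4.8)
The statement is the classical \emph{analytic Fredholm theorem}, and I would prove it by reducing, locally, the invertibility of $I - T(z)$ to the non-vanishing of a scalar analytic determinant, and then globalizing by connectedness of $B$. First I would localize. Fix $z_0 \in B$. Since $T(z_0)$ is compact, choose a finite-rank operator $F$ with $\|T(z_0) - F\| < 1/2$; by analyticity (hence norm-continuity) of $z \mapsto T(z)$ there is a disk $B_0 \subset B$ centered at $z_0$ on which $\|T(z) - F\| < 1$. On $B_0$ the operator $I - (T(z) - F)$ is invertible via a norm-convergent Neumann series, hence analytic in $z$, and I would factor
\begin{equation*}
I - T(z) = \big(I - (T(z) - F)\big)\big(I - A(z)\big),
\qquad A(z) := \big(I - (T(z)-F)\big)^{-1} F,
\end{equation*}
so that, on $B_0$, invertibility of $I - T(z)$ is equivalent to invertibility of $I - A(z)$, where $A(z)$ is an analytic family of operators of rank at most $\mathrm{rank}(F)$.

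Next I would carry out the finite-dimensional reduction. Writing $F = \sum_{n=1}^{\widetilde N} \phi_n(\cdot)\,\psi_n$ with $\phi_n$ in the dual space and $\psi_n$ in the Banach space, the range of $A(z)$ sits in the span of the analytic vectors $\eta_n(z) := \big(I-(T(z)-F)\big)^{-1}\psi_n$. Solving $(I - A(z))x = y$ and pairing with each $\phi_m$ shows that $I - A(z)$ is invertible if and only if the $\widetilde N \times \widetilde N$ matrix with entries $\delta_{mn} - \phi_m(\eta_n(z))$ is invertible. Setting $g(z) := \det\big(\delta_{mn} - \phi_m(\eta_n(z))\big)$, a scalar analytic function on $B_0$, we obtain the key equivalence: $I - T(z)$ is invertible iff $g(z) \ne 0$. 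Cramer's rule then expresses $(I - A(z))^{-1}$, and therefore $(I-T(z))^{-1}$, as a quotient whose numerator entries are analytic in $z$ and whose denominator is $g(z)$; hence $(I - T(z))^{-1}$ is meromorphic on $B_0$ with poles confined to the (isolated, unless $g\equiv 0$) zeros of $g$.

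Finally I would globalize. On each disk we have the dichotomy $g \equiv 0$ (so $I - T$ is invertible \emph{nowhere} on the disk) or $g \not\equiv 0$ (so $I - T$ fails to be invertible only at the isolated zeros of $g$). Because the set where $I - T(z)$ is non-invertible is intrinsic to $T$ and independent of the auxiliary $F$, these two alternatives partition $B$ into the set $B_{\mathrm{bad}}$ of points near which $I - T$ is locally nowhere invertible and the set $B_{\mathrm{good}}$ of points near which non-invertibility is locally isolated. Both are open: $B_{\mathrm{bad}}$ because ``$g \equiv 0$ on a disk'' means exactly that $I - T$ is non-invertible throughout the disk, a property stable under shrinking and recentering; and $B_{\mathrm{good}}$ because an isolated non-invertibility set stays isolated on every subdisk. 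Since they are disjoint, open, and cover the connected set $B$, one of them is empty, giving precisely the two cases of the statement. In the second case the locally defined meromorphic inverses agree on overlaps (they are all equal to $(I-T(z))^{-1}$ where it exists) and hence glue to a single meromorphic operator-valued function on all of $B$.

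The main obstacle is the finite-dimensional reduction of the middle paragraph: one must verify that $g$ is genuinely analytic and that its vanishing is equivalent to non-invertibility of the operator $I - T(z)$ itself, not merely of the auxiliary matrix, and — in the Banach (rather than Hilbert) setting — that the finite-rank calculus goes through verbatim with the dual functionals $\phi_n$ in place of inner products. By contrast, the globalization is a soft connectedness argument, whose only subtlety is checking that the two alternatives are open, which, as noted, reduces to the observation that $g \equiv 0$ on a disk is equivalent to the $F$-independent condition that $I - T$ is invertible at no point of that disk.
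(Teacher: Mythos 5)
First, a point of comparison: the paper does not prove this statement at all --- it is quoted as \cite[Theorem 1]{Steinberg1968} and used as a black box in the proof of Proposition~\ref{prop:scattering_mer} --- so your attempt can only be measured against the classical argument. What you propose is the standard Reed--Simon-style proof of the analytic Fredholm theorem: local factorization $I-T(z)=\bigl(I-(T(z)-F)\bigr)\bigl(I-A(z)\bigr)$ through a finite-rank perturbation, reduction of invertibility to the non-vanishing of a scalar analytic determinant $g$, Cramer's rule for local meromorphy, and a connectedness dichotomy to globalize. The globalization is sound, and the determinant reduction is essentially right up to a loose end you flag yourself: the implication ``matrix $\bigl(\delta_{mn}-\phi_m(\eta_n(z))\bigr)$ singular $\Rightarrow$ $I-A(z)$ not invertible'' is not automatic, because the candidate null vector $x=\sum_n c_n\eta_n(z)$ built from a null vector $c$ of the matrix may vanish when the $\eta_n(z)$ are linearly dependent; one must either take the $\phi_n$ linearly independent (always possible) and exhibit a nonzero functional annihilating the range of $I-A(z)$ from a null vector of the transposed matrix, or invoke the Fredholm alternative for finite-rank operators.

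The genuine gap is the very first step of your localization: ``since $T(z_0)$ is compact, choose a finite-rank operator $F$ with $\|T(z_0)-F\|<1/2$.'' The theorem is stated for an arbitrary Banach space, and on a general Banach space compact operators need \emph{not} be norm-limits of finite-rank operators: this is essentially the approximation property, which fails for Enflo-type spaces, and such failures produce compact endomorphisms that cannot be approximated in norm by finite-rank ones. So this step would fail, and your argument proves the theorem only for Banach spaces with the approximation property --- in particular for Hilbert spaces, which is all the paper actually needs, since in Proposition~\ref{prop:scattering_mer} the operators act on $V=H^1_{\Gamma_D}(D)$. To cover the general Banach case one replaces norm-approximation by the Riesz--Schauder theory of the single compact operator $T(z_0)$: either $1\notin\sigma(T(z_0))$, and then $I-T(z)$ is invertible near $z_0$ by perturbation, or $1$ is an isolated spectral point whose Riesz projection $P$ is finite-rank \emph{for every} compact operator, approximation property or not; a block decomposition of $I-T(z)$ with respect to $\operatorname{ran}P\oplus\ker P$ and a Schur-complement argument then yields the same finite-dimensional analytic determinant as in your middle paragraph, with $P$ playing the role of your $F$. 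This is, in substance, Steinberg's original argument, and the rest of your proof (Cramer's rule, the dichotomy, the gluing) goes through unchanged on top of it.
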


\begin{proposition}
	\label{prop:scattering_mer}
	The frequency response map $\mcS$ associated with problem~\eqref{eq:total_field_weak} is meromorphic in all open bounded and connected subsets of $\C$, and all its poles have negative imaginary part.
\end{proposition}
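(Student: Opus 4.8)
The plan is to recast the variational problem~\eqref{eq:total_field_weak} as an operator equation of the form $(I-T(z))u_z=\ell(z)$ and then invoke the analytic Fredholm theory of Theorem~\ref{thm:steinberg}. First I would isolate from the bilinear form $B_z$ in~\eqref{eq:scattering_B} a coercive, $z$-independent part. Endowing $V$ with the inner product inducing $\normw{\cdot}{V}{\Real{z_0}}$, I set $a(u,v):=\int_D\Grad u\cdot\overline{\Grad v}\,d\x+\Real{z_0}^2\int_D u\overline v\,d\x$, which is bounded and coercive on $V$, and I write $B_z(u,v)=a(u,v)-c_z(u,v)$ with $c_z(u,v):=(z^2+\Real{z_0}^2)\int_D u\overline v\,d\x+iz\int_{\Gamma_R}u\overline v\,ds$. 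By Lax--Milgram $a$ induces an isomorphism, so I can define a bounded linear operator $T(z):V\to V$ through $a(T(z)u,v)=c_z(u,v)$ and an element $\ell(z)\in V$ through $a(\ell(z),v)=L_z(v)$; problem~\eqref{eq:total_field_weak} is then equivalent to $(I-T(z))u_z=\ell(z)$, whence $\mcS(z)=(I-T(z))^{-1}\ell(z)$.

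The two facts needed to apply Theorem~\ref{thm:steinberg} are that $\{T(z)\}_{z\in\C}$ is an \emph{analytic family of compact operators} and that $\ell(z)$ is analytic. Analyticity is immediate: since $c_z$ depends polynomially (indeed quadratically) on $z$, the map $z\mapsto T(z)$ is entire; likewise $z\mapsto\ell(z)$ is entire, using that $g_z=\frac{\partial u^i}{\partial\mathbf n}-izu^i$ with $u^i=e^{iz\d\cdot\x}$ defines an entire $V$-valued map. Compactness of each $T(z)$ follows because the two sesquilinear forms entering $c_z$ factor through compact embeddings: the bulk term $\int_D u\overline v\,d\x$ through the compact (Rellich) embedding of $H^1(D)$ into $L^2(D)$, and the boundary term $\int_{\Gamma_R}u\overline v\,ds$ through the compact trace map $H^1(D)\to L^2(\Gamma_R)$ (the trace lands in $H^{1/2}(\Gamma_R)$, which embeds compactly into $L^2(\Gamma_R)$).

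With these properties, Theorem~\ref{thm:steinberg}, applied on the open connected set $\C$, yields the dichotomy that either $I-T(z)$ is nowhere invertible or $(I-T(z))^{-1}$ is meromorphic on $\C$. To exclude the first alternative I would invoke Theorem~\ref{thm:scattering_wp}: for every $z$ with $\Imag{z}>0$ problem~\eqref{eq:total_field_weak} has a unique solution, so $I-T(z)$ is invertible at such points. Hence $(I-T(z))^{-1}$ is meromorphic on $\C$, and since $\mcS(z)=(I-T(z))^{-1}\ell(z)$ is the product of a meromorphic operator-valued map with an entire $V$-valued map, $\mcS$ is meromorphic; restriction to any open bounded connected subset gives the stated conclusion. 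The localization of the poles is then a direct consequence of well-posedness: Theorem~\ref{thm:scattering_wp} guarantees invertibility of $I-T(z)$ on all of $\C^+=\{\Imag{z}\geq 0\}$, so $\mcS$ has no pole there and every pole must satisfy $\Imag{z}<0$.

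The main point requiring care is the compactness of the boundary term, i.e.\ the compactness of the trace operator $H^1(D)\to L^2(\Gamma_R)$; the Lax--Milgram reformulation and the analyticity via polynomial/exponential dependence on $z$ are routine. A secondary, but essential, care point is to apply Theorem~\ref{thm:steinberg} on the full plane $\C$ rather than on an arbitrary bounded subset, so that the invertibility points furnished by Theorem~\ref{thm:scattering_wp} in the upper half plane genuinely lie in the connected domain to which the dichotomy is applied.
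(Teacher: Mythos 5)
Your proposal is correct and follows essentially the same route as the paper: recast problem~\eqref{eq:total_field_weak} as $(I-T(z))u_z=\ell(z)$ with $\{T(z)\}$ an entire family of compact operators, invoke Theorem~\ref{thm:steinberg}, and use the well-posedness of Theorem~\ref{thm:scattering_wp} to place all poles in $\{\Imag{z}<0\}$. The differences are minor---the paper shifts by the unweighted term $\int_D u\overline v\,d\x$ and proves compactness by factoring $T(z)$ through the compact embedding $V\hookrightarrow H^{1/2+\varepsilon}(D)$, whereas you split into a bulk part (Rellich) and a boundary part (compact trace)---and your explicit exclusion of the ``nowhere invertible'' alternative, by applying the theorem on all of $\C$ so that the invertibility points in $\C^+$ lie in the same connected domain, is in fact slightly more careful than the paper's own write-up.
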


\begin{proof}
We proceed as in~\cite[Proposition 2]{Lenoir1992}.
We add and subtract the term $\int_D u_z\overline v dx$ to the left-hand side of~\eqref{eq:total_field_weak}, and we get
\begin{align*}
	&\int_D\Grad u_z(\x)\cdot\overline{\Grad v}(\x) d\x
	+\int_D u_z(\x)\overline{v}(\x) dx
	-(1+z^2) \int_D u_z(\x)\overline{v}(\x) d\x\\
	&\quad-iz \int_{\Gamma_R} u_z(\x)\overline{v}(\x) ds
	=\int_{\Gamma_R} g_z(\x)\overline{v}(\x) ds
	\quad\forall\ v\in V,
\end{align*}
which can be written equivalently as 
\begin{equation}
	(I-T(z))u_z=G_z\quad\text{in }V,
\end{equation}
where $T(z), G_z:V\rightarrow V$ are defined, respectively, as
\begin{align*}
	\dual{T(z)u}{v}
		&=(1+z^2) \int_D u(\x)\overline{v}(\x) d\x
			 +iz \int_{\Gamma_R} u(\x)\overline{v}(\x) ds
			 \quad\forall\ v\in V,\\
	\dual{G_z}{v}
		&=\int_{\Gamma_R} g_z \overline{v}(\x) ds
		\quad\forall\ v\in V.
\end{align*} 
Therefore, $\mcS(z)=(I-T(z))^{-1}G_z$.
We prove that $T(z)$ is compact in all open bounded connected subsets of the complex plane $\C$.
We write $T(z)$ as $T(z)=\widetilde T(z)\circ J$, where $J$ is the compact embedding $J:V\rightarrow H^{1/2+\varepsilon}(D)$, and $\widetilde T(z):H^{1/2+\varepsilon}(D)\rightarrow V$. Hence, in order to prove the compactness of $T(z)$, it is enough to show that $\widetilde T(z)$ is continuous. For all $u\in H^{1/2+\varepsilon}(D)$, we have
\begin{align}
	\nonumber
	\norm{\widetilde T(z)u}{V}
	& = \sup_{v\in V, \norm{v}{V}=1} \abs{\dual{\widetilde T(z)u}{v}}\\
	\nonumber
	& = \sup_{v\in V, \norm{v}{V}=1} 
		\abs{ (1+z^2) \int_D u(\x)\overline{v}(\x) d\x
		 +iz \int_{\Gamma_R} u(\x)\overline{v}(\x) ds  }	\\
	\nonumber
	&\leq \sup_{\norm{v}{V}=1} 
		\left( \abs{1+z^2}\norm{u}{L^2(D)}\norm{v}{L^2(D)}
		 +\abs{z}\norm{u}{L^2(\Gamma_R)}\norm{v}{L^2(\Gamma_R)}\right)\\
	\nonumber
	&\leq \sup_{\norm{v}{V}=1} 
		\left( \abs{1+z^2}\norm{u}{L^2(D)}\norm{v}{L^2(D)}
		 +\abs{z}\norm{u}{L^2(\partial D)}\norm{v}{L^2(\partial D)}\right)\\
	\nonumber
	&\leq \sup_{\norm{v}{V}=1} 
		\left( \abs{1+z^2}\norm{u}{L^2(D)}\norm{v}{L^2(D)}
		 +C_{tr}^2 \abs{z}\norm{u}{H^{1/2+\varepsilon}(D)}
		 	\norm{v}{H^{1/2+\varepsilon}(D)}\right)\\
	\nonumber
	&\leq \max\{\abs{1+z^2},C_{tr}^2\abs{z}\}\norm{u}{H^{1/2+\varepsilon}(D)},
\end{align}
where $C_{tr}$ is the continuity constant of the trace operator $\gamma:H^{1/2+\varepsilon}(D)\rightarrow L^2(\partial D)$ (see, e.g.,~\cite[Theorem 5.36]{Adams2003}). 
Applying Theorem~\ref{thm:steinberg}, we conclude that $(I-T(z))^{-1}$ is meromorphic in all open bounded and connected subsets of $\C$ and, since $G_z$ is linear in $z$ (hence holomorphic in $\C$), the same conclusion applies to the frequency response function $\mcS(z)=(I-T(z))^{-1}G_z$.
Moreover, since Theorem~\ref{thm:scattering_wp} states that $\mcS$ is well defined in $\C^+$, we deduce that all poles of $\mcS$ must have negative imaginary part. 
\end{proof}

\subsection{LS-Pad\'e approximant of the frequency response map}

We construct the LS-Pad\'e approximant of the frequency response map $\mcS$ following Algorithm~\ref{al:pade}. Having fixed $z_0\in\C^+$, $N,\ M$, and $E\geq M+N$, the coefficients of the denominator are computed by identifying the eigenvector corresponding to the smallest eigenvalue of the matrix~\eqref{eq:gram_matrix}, where the $\beta$-th Taylor coefficient of $\mcS$, $\coeff{\mcS}{\beta,z_0}$, solves the following recursive problem:
\begin{align}
	\nonumber
	&\int_D\Grad \coeff{\mcS}{\beta,z_0}(\x)\cdot\overline{\Grad v}(\x) d\x
	-z_0^2 \int_D \coeff{\mcS}{\beta,z_0}(\x)\overline{v}(\x) d\x
	-iz_0 \int_{\Gamma_R} \coeff{\mcS}{\beta,z_0}(\x)\overline{v}(\x) ds\\
	\nonumber
	&\quad=2 z_0 \int_D \coeff{\mcS(\x)}{\beta-1,z_0}\overline{v}(\x) d\x
	+i \int_{\Gamma_R} \coeff{\mcS(\x)}{\beta-1,z_0}\overline{v}(\x) ds\\
	\label{eq:scattering_tay}
	&\quad+\int_D \coeff{\mcS(\x)}{\beta-2,z_0}\overline{v}(\x) d\x
	+\frac{1}{\beta !}\int_{\Gamma_R} \frac{d^\beta}{dz}g_z(\x)|_{z=z_0}
		\cdot\overline{v}(\x) ds
	\quad\forall\ v\in V.
\end{align}
Since the PDE operator in~\eqref{eq:scattering_tay} is the same as in~\eqref{eq:total_field_weak}, and the linear form at the right-hand side is bounded, by applying Theorem~\ref{thm:scattering_wp}, we conclude that problem~\eqref{eq:scattering_tay} is well-posed for any $z\in\C^+$.

\begin{figure}[h]
	\centering
	\includegraphics[width=0.3\textwidth]{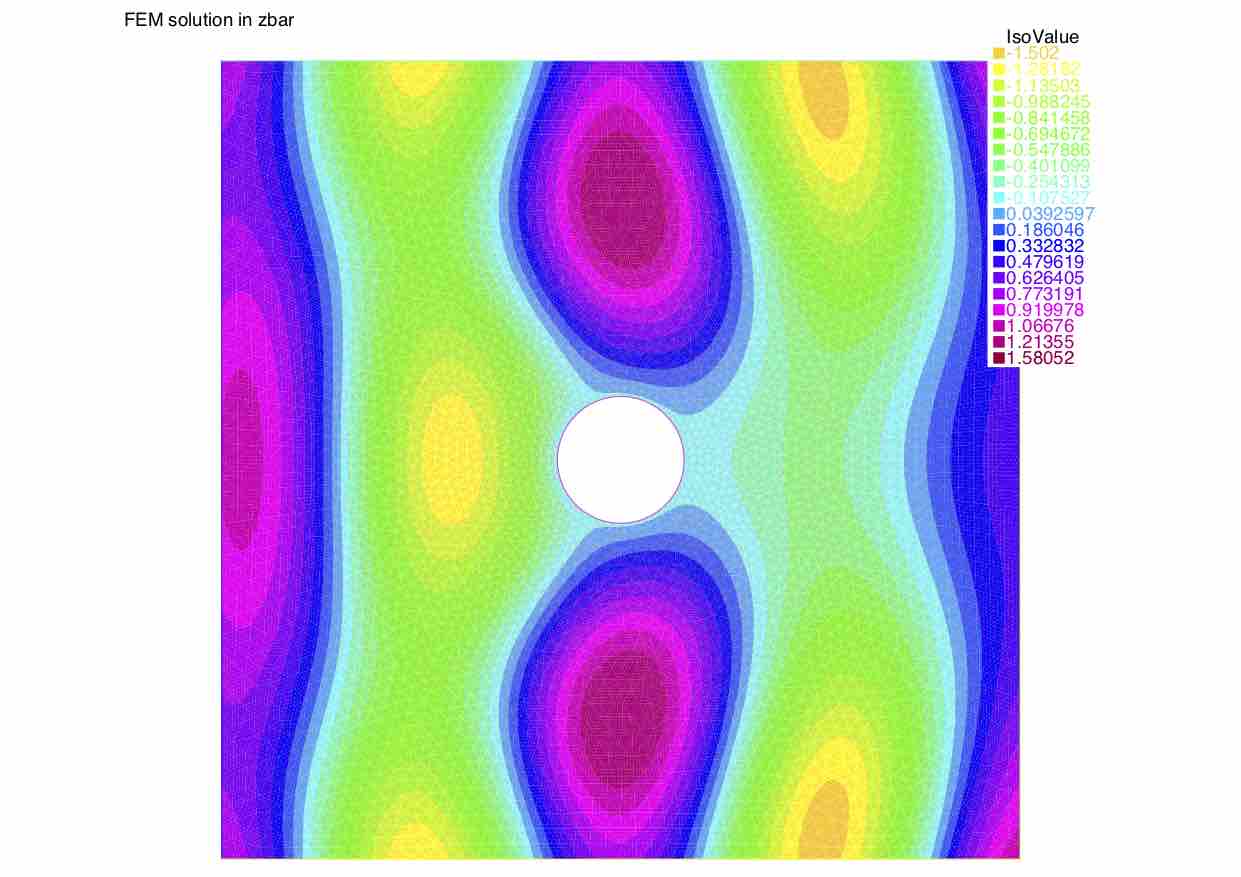}
	\includegraphics[width=0.3\textwidth]{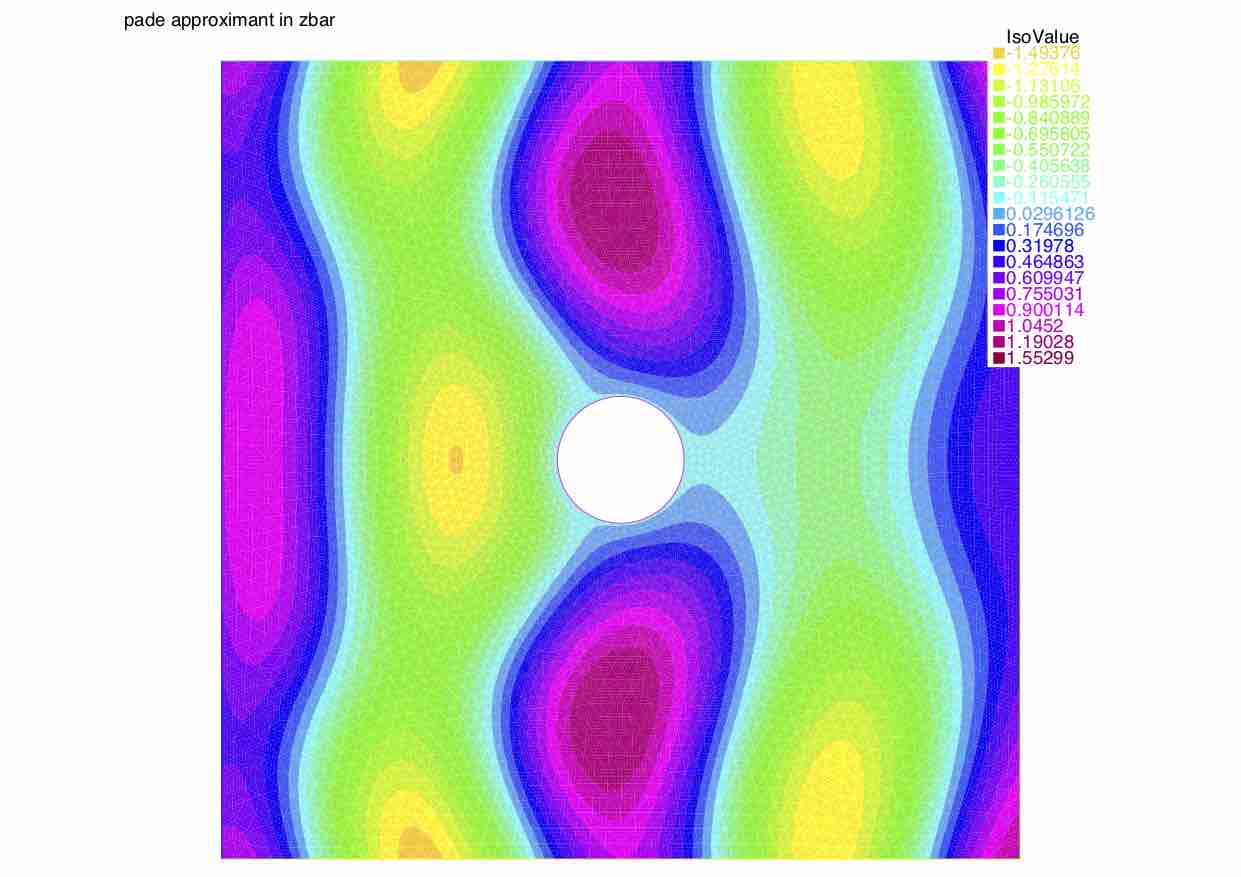}
	\includegraphics[width=0.3\textwidth]{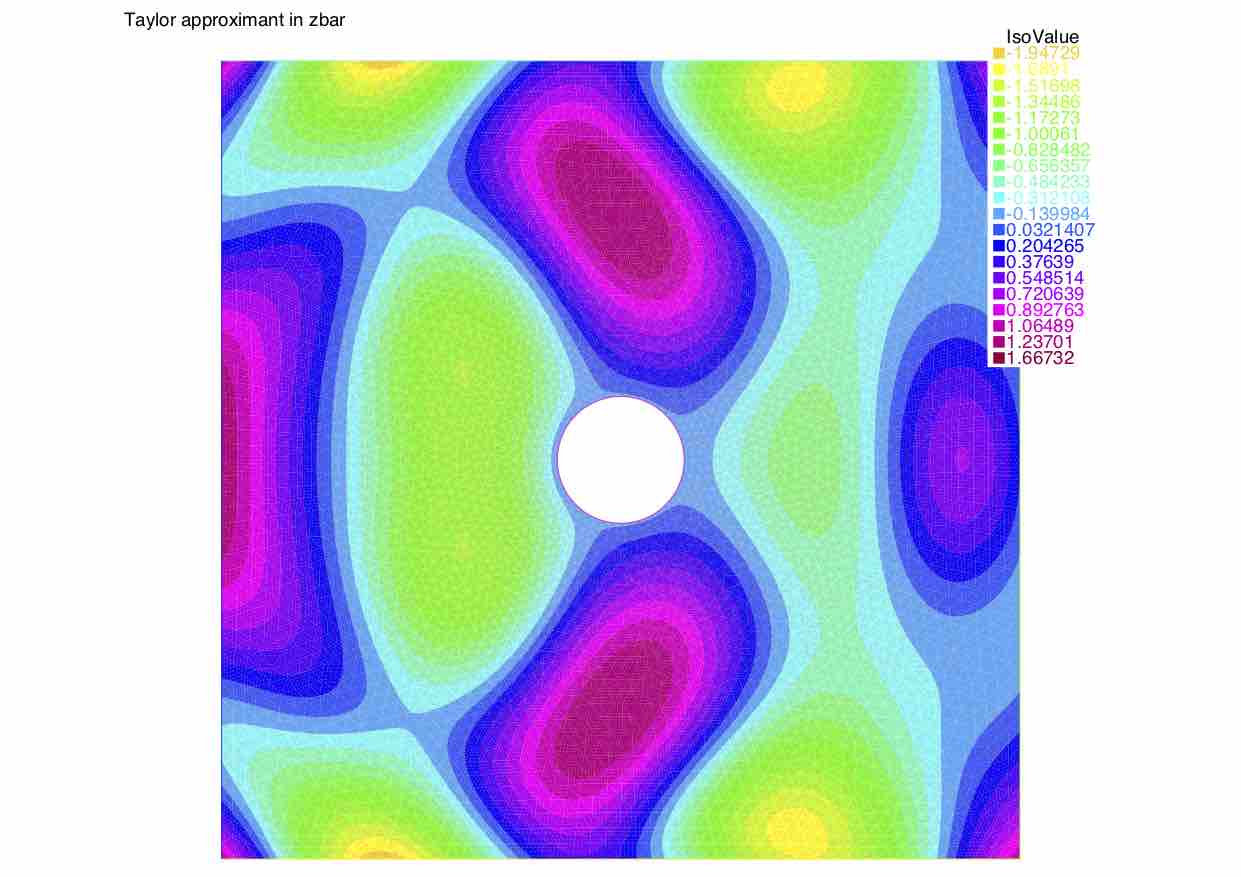}
	\caption{Comparison between the $\mathbb P^3$ finite element solution of problem~\eqref{eq:total_field_weak} (left), with its LS-Pad\'e approximation $\pade{\mcS}(z)$ centered in $z_0=3+0.5i$ with degree $(M,N)=(10,2)$ (center), and its Taylor polynomial centered in $z_0=3+0.5i$ with degree $M+N=12$ (right), in the point $z=2$.}
	\label{fig:scattering_comparison}
\end{figure}

Let $u^i$ be the incident wave traveling along the direction $\d=(\cos(0),\sin(0))$ with wavenumber $z=2$. Figure~\ref{fig:scattering_comparison} (left) represents the solution of problem~\eqref{eq:total_field_weak} computed via the finite element method with polynomials of degree 3. Figure~\ref{fig:scattering_comparison} (center) and (right) represents the LS-Pad\'e approximation $\pade{\mcS}(z)$ at $z=2$ with center $z_0=3+0.5i$ and degree $(M,N)=(10,2)$ (and parameters $\rho=\abs{z-z_0}$, $E=M+N$), and the Taylor polynomial centered in $z_0=3+0.5i$ with degree $E=12$, respectively. Both the Pad\'e and the Taylor approximant are constructed starting from the set of evaluations $\{\mcS(z_0),\coeff{\mcS}{1}(z_0),\ldots,\coeff{\mcS}{12}(z_0)\}$. The LS-Pad\'e approximant reproduces the behavior of the reference solution much better than the Taylor approximant, and the LS-Pad\'e relative error in the weighted $H^1(D)$-norm $err_{pade}=0.101089$ is much smaller than the Taylor one $err_{tay}=0.611428$.

Let $z=3$, and $z_0=3+0.5i$. In Figure~\ref{fig:scattering_err_vs_M_z3} (left) we plot the relative LS-Pad\'e approximation error versus the degree of the LS-Pad\'e numerator, for different values of denominator degree. In Figure~\ref{fig:scattering_err_vs_M_z3} (right), the relative error obtained by approximating the frequency response map with the Taylor polynomial (black dashed line), and with the LS-Pad\'e approximant are compared. Also the diagonal LS-Pad\'e approximant is considered (dashed purple line), where the LS-Pad\'e numerator and denominator have the same degree. 
In Figure~\ref{fig:scattering_err_vs_M_z3} (right), the errors are plotted versus the number of derivatives $\coeff{\mcS}{\beta,z_0}$, $\beta=0,\ldots, E$ computed (i.e., the number of PDEs solved offline). Since $\ball{z_0}{\abs{z-z_0}}$, the disk with center $z_0=3+0.5i$ and radius $r_1=\abs{z-z_0}=0.5$, is contained in the half plane where the frequency response map is holomorphic (see Proposition~\ref{prop:scattering_mer}), the Taylor series centered in $z_0$ converges, and the Taylor approximation error is comparable to the LS-Pad\'e approximation error. Figure~\ref{fig:scattering_err_vs_M_z45} presents analogous plots as in Figure~\ref{fig:scattering_err_vs_M_z3}, for the point $z=2$. In this case, $\ball{z_0}{\abs{z-z_0}}\cap\{\Imag{z}<0\}\neq\emptyset$, and the Taylor series diverges.

\begin{figure}[h]
	\centering
	\includegraphics[width=0.45\textwidth]{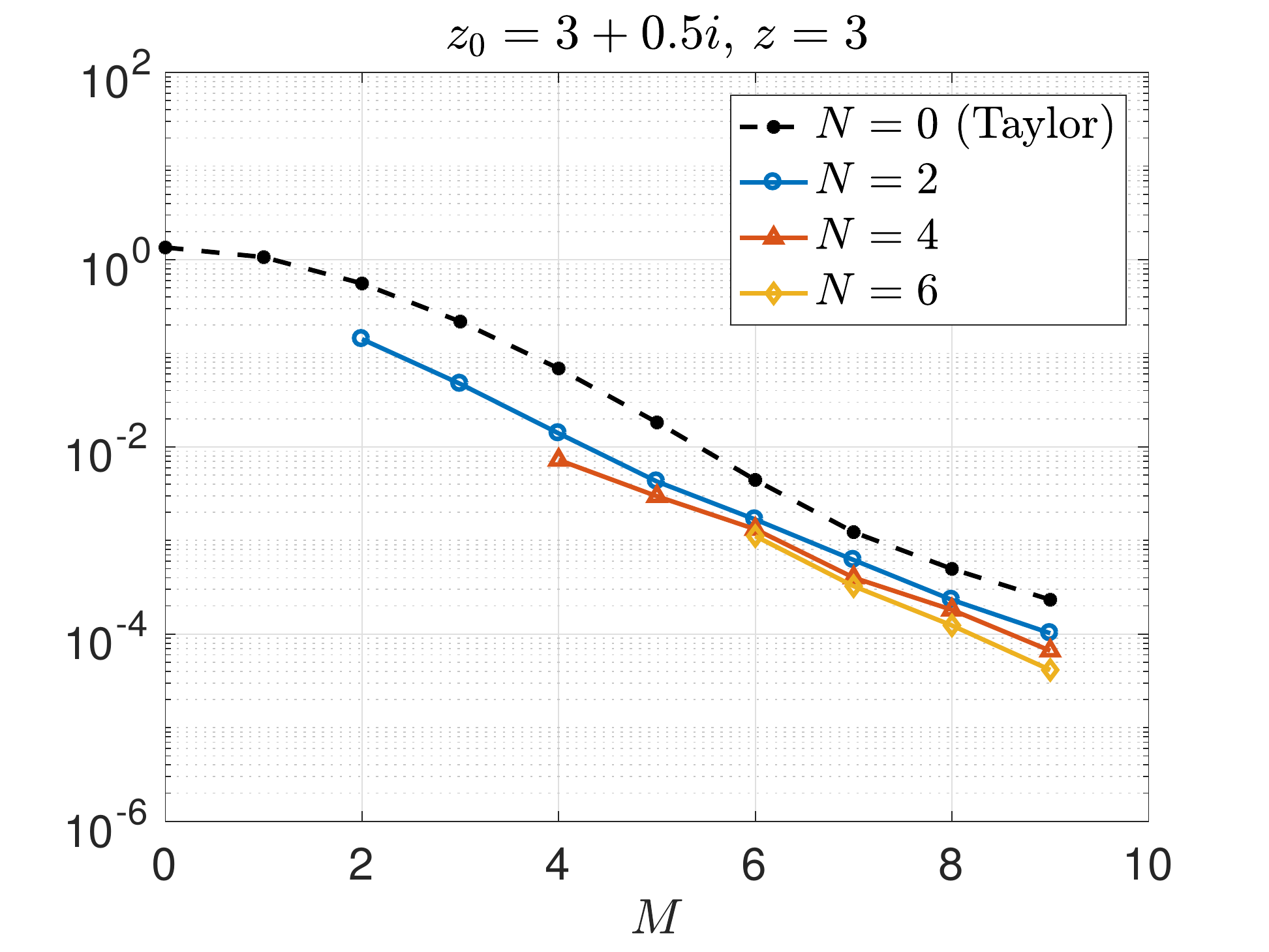}
	\includegraphics[width=0.45\textwidth]{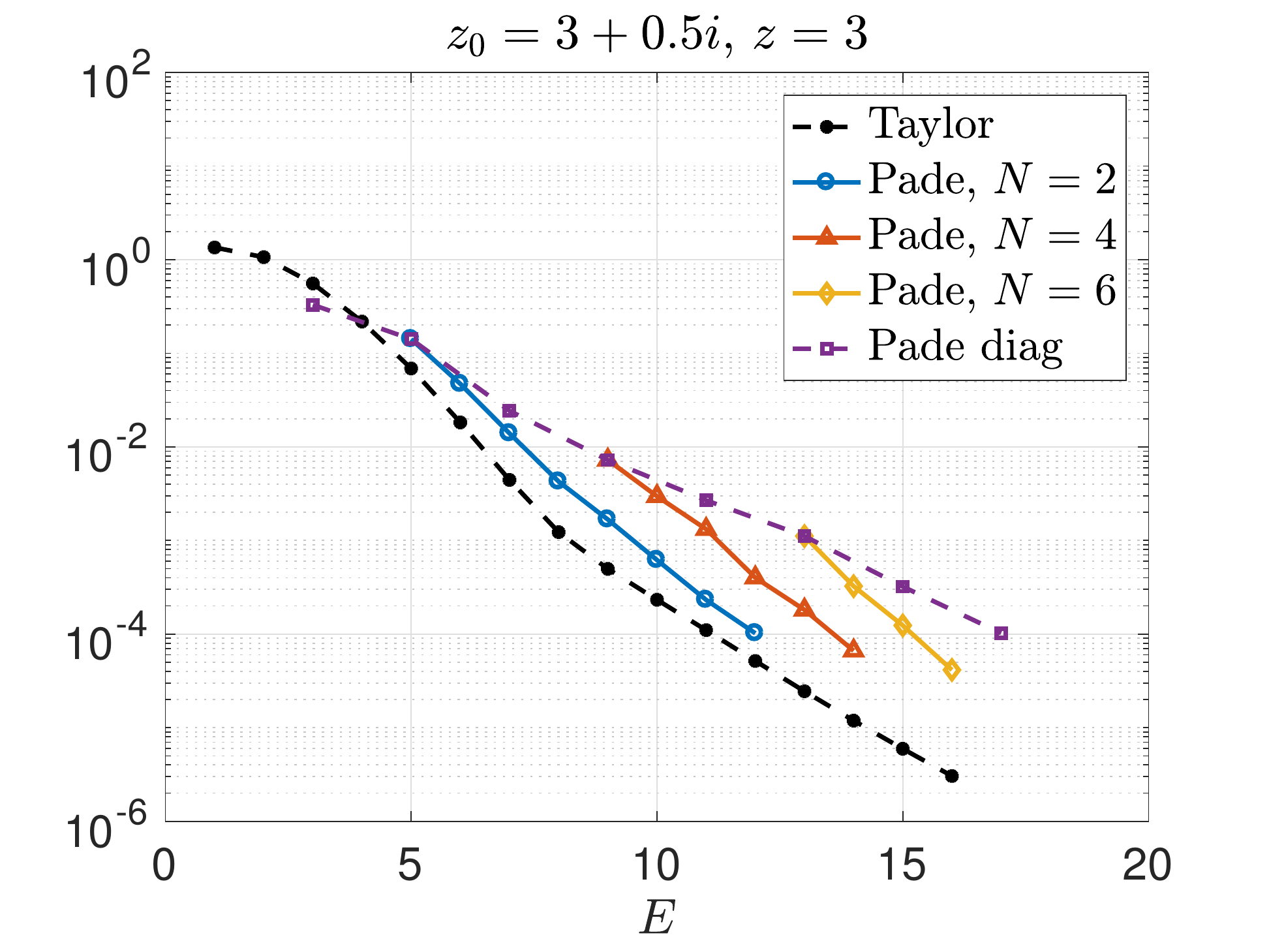}
	\caption{Relative Taylor and LS-Pad\'e approximation error in $z=3$
	plotted versus the numerator degree (left), and the number of derivatives $\coeff{\mcS}{\beta,z_0}$, $\beta=0,\ldots, E$ computed offline (right).}
	\label{fig:scattering_err_vs_M_z3}
\end{figure}

\begin{figure}[h] 
	\centering
	\includegraphics[width=0.45\textwidth]{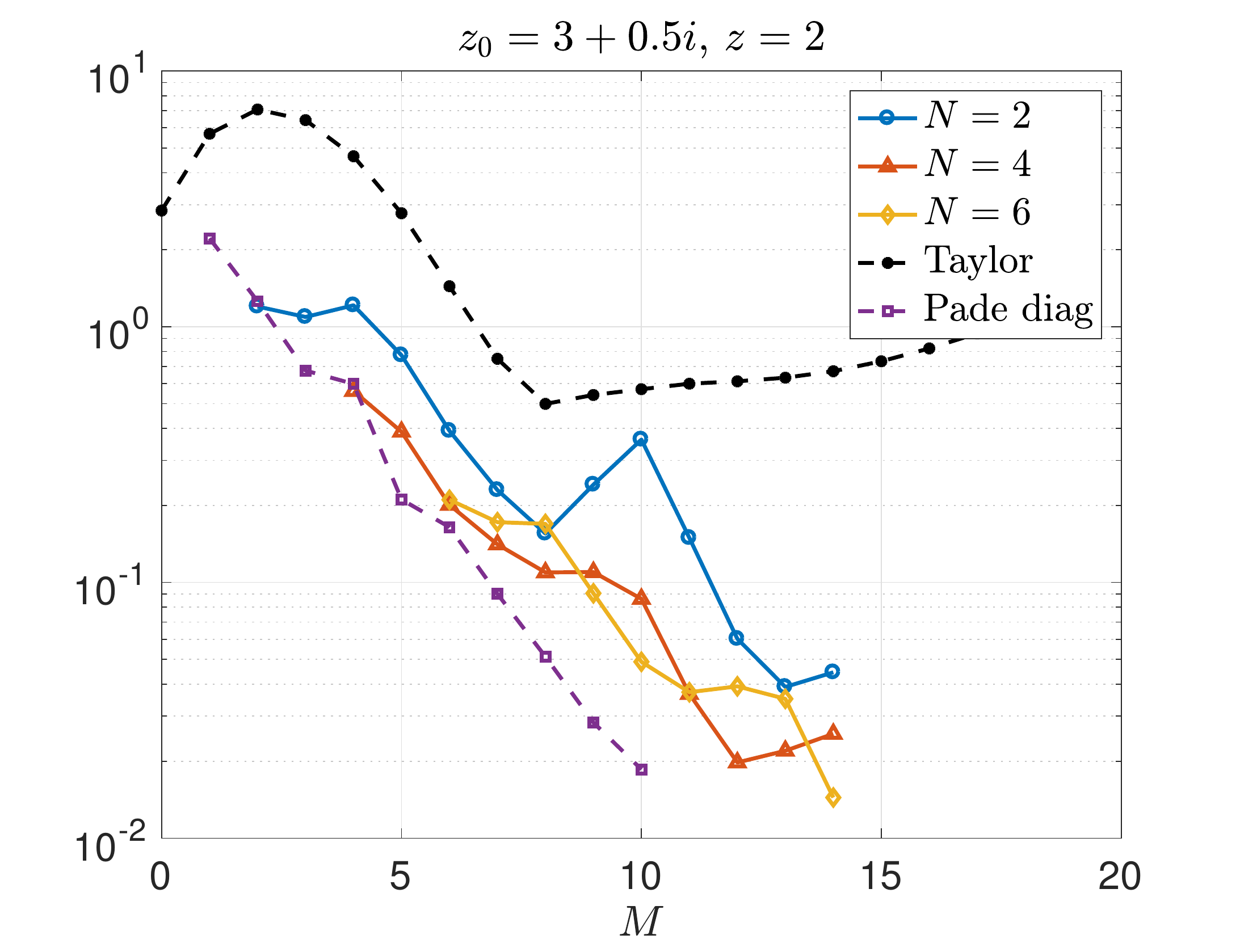}
	\includegraphics[width=0.45\textwidth]{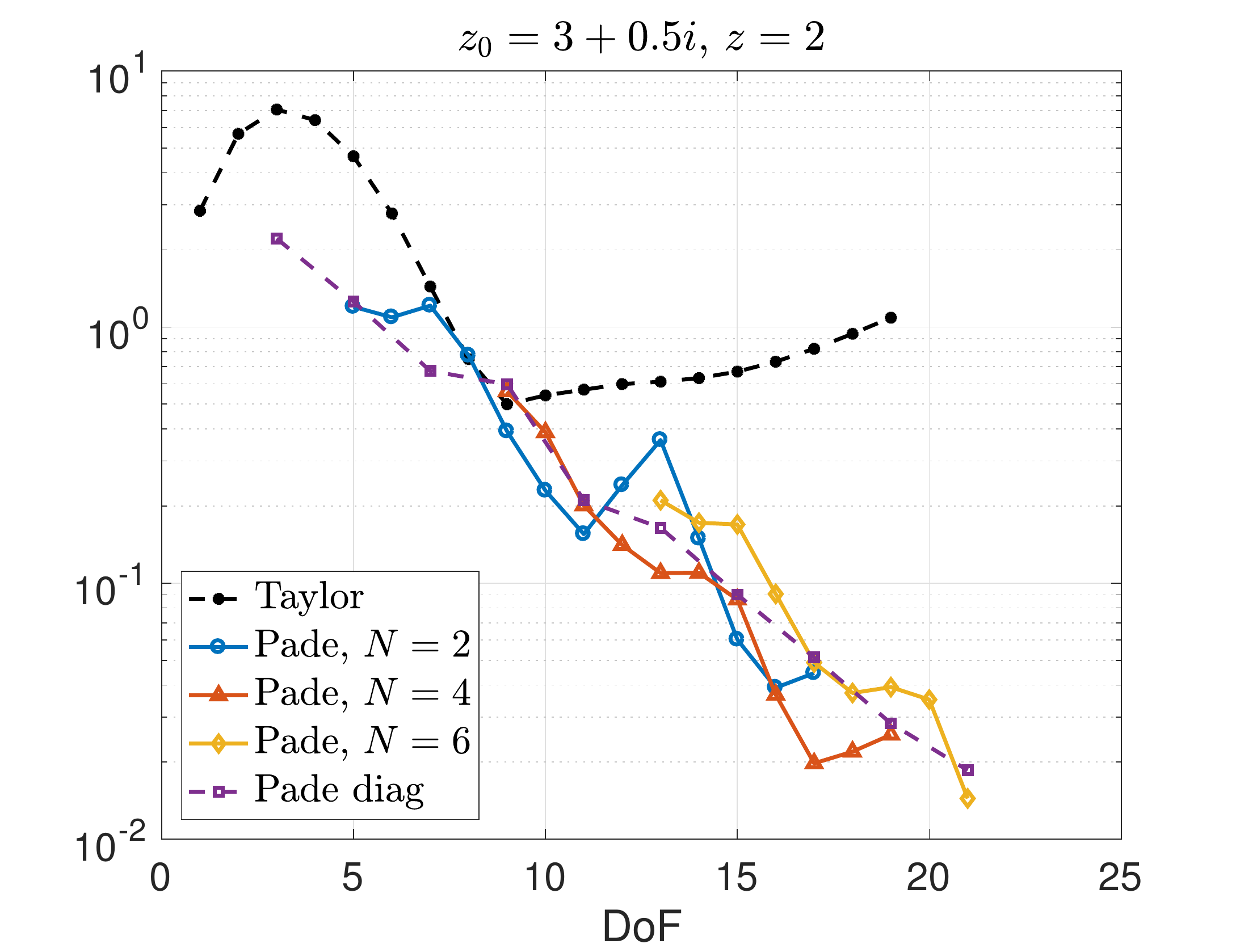}
	\caption{Relative Taylor and LS-Pad\'e approximation error in $z=2$
	plotted versus the numerator degree (left), and the number of derivatives $\coeff{\mcS}{\beta,z_0}$, $\beta=0,\ldots, E$ computed offline (right).}
	\label{fig:scattering_err_vs_M_z45}  
\end{figure}


\section{Application in high frequency regime}
\label{sec:high_frequency}

In this section, we want to study the approximation properties of the LS-Pad\'e approximant in the high frequency regime. As in~\cite{Bonizzoni2016}, we consider Problem~\eqref{prob:model_problem_parametric} with $D=(0,\pi)\times (0,\pi)$, $\Gamma_D=\partial D$, $g_D=0$, and $f(\x)=-\Delta w(x)-\nu^2 w(x)$, where $w(x)$ is the product between the plane wave $\e^{-i\nu\d\cdot\x}$ with wavenumber $\nu=\sqrt{51}$ traveling along the direction $\d=(\cos(\pi/6),\sin(pi/6))$ and the normalized quadratic bubble function vanishing on $\partial D$. Note that $w(x)$ is the exact solution of the Helmholtz equation~\eqref{eq:helmholtz_weak}, i.e., $\mcS(z)=w$, when $z=\nu^2$. The interval of frequencies we are interested in is $K=[39,55]$, which contains 6 eigenvalues of the Dirichlet-Laplace operator: $40, 41, 45, 50, 52, 53$.

In Figure~\ref{fig:high_frequency_FEM_zbar}, we plot the numerical solution $\mcS_h\in V$ of problem~\eqref{eq:helmholtz_weak} with $z=51$, computed via $\mathbb P^3$ continuous finite elements. Observe that the relative finite element error is of the order of $10^{-5}$. In Figure~\ref{fig:high_frequency_pade_zbar}, the LS-Pad\'e approximant $\mcS_{h,P}$ centered in $z_0=47+0.5i$ evaluated in $z=51$ is represented for two different values of the denominator degree. Due to the fact that more derivatives are employed in the right plot, more accurate results are obtained with higher denominator polynomial degrees.

In Figure~\ref{fig:high_frequency_error_zbar}, we plot the LS-Pad\'e approximation error w.r.t. the exact solution, in $z=51$, for different values of the degree of the denominator, and we compare it with the numerical rate~\eqref{eq:numerical_rate}. When $N=2,4$, the LS-Pad\'e technique works as expected (or even better), whereas for $N=6$ the error is no longer decreasing. We believe that this behavior is caused by the ill-conditioning of Step 7 in Algorithm~\ref{al:pade}), i.e., the computation of the (normalized) eigenvector of the Gramian matrix $\G$ defined in~\eqref{eq:gram_matrix}.

We partition uniformly the interval of interest $K$ in $100$ subintervals. At each point $z$ of the grid we have computed the numerical solution $\mcS_h(z)$ of the Helmholtz problem~\eqref{eq:helmholtz_weak}, and the LS-Pad\'e approximant $\mcS_{h,P}$ (see Figure~\ref{fig:high_frequency_norm_comparison}), as well as the relative error $\frac{\normw{\mcS_h(z)-\mcS_{h,P}(z)}{H^1(D)}{\sqrt{\Real{z_0}}}}{
\normw{\mcS_h(z)}{H^1(D)}{\sqrt{\Real{z_0}}}}$ (see Figure~\ref{fig:high_frequency_relative_error}).
In Figure~\ref{fig:high_frequency_roots}, we study the convergence of the roots of the LS-Pad\'e denominator $\den$ to the exact Laplace eigenvalues. 
For all degrees of the LS-Pad\'e denominator $\den$, there are two roots of $\den$ which converge to the two Laplace eigenvalues closest to $z_0$. Concerning the other roots, we observe two regimes: the error decreases for $M$ smaller than a fixed value $M^\star$ which depends on $N$ ($M^\star=12$ if $N=4$, $M^\star=9$ if $N=6$); for $M>M^\star$, the problem becomes ill-conditioned and the roots do not converge anymore to the Laplace eigenvalues. This behavior explains also the reason why in Figure~\ref{fig:high_frequency_relative_error} only 4 peaks are identified by the LS-Pad\'e approximant.
The ill-conditioning of the eigenvalue problem limits the applicability of the method, especially in high frequency regime, where the singularities are dense. To overcome this problem, we are currently investigating the multi-point generalization of the single-point LS-Pad\'e method proposed in this paper. In a multi-point framework, the number of derivatives to be computed are split over the set of centers. In particular, instead of computing $M+N$ derivatives in a single center $z_0$, $\lceil(M+N)/n\rceil$ derivatives will be computed in each center $z_i$, for $i=0,\ldots,n-1$.

\begin{figure}[htb]
\centering
\includegraphics[width=0.45\textwidth]{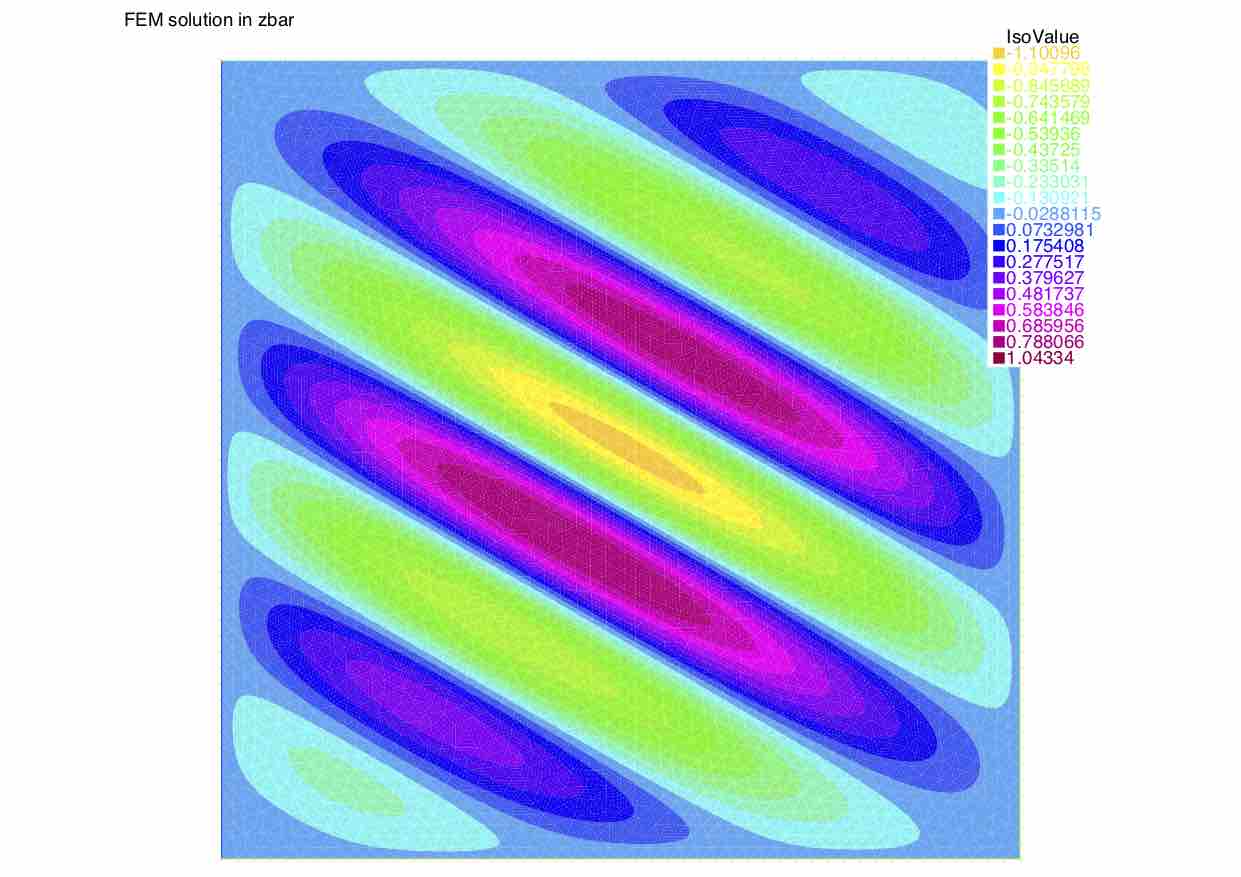}
\caption{Finite element solution $\mcS_h(51)\in V$ of problem~\eqref{eq:helmholtz_weak}}
\label{fig:high_frequency_FEM_zbar}
\end{figure}

\begin{figure}[htb]
\centering
\includegraphics[width=0.45\textwidth]{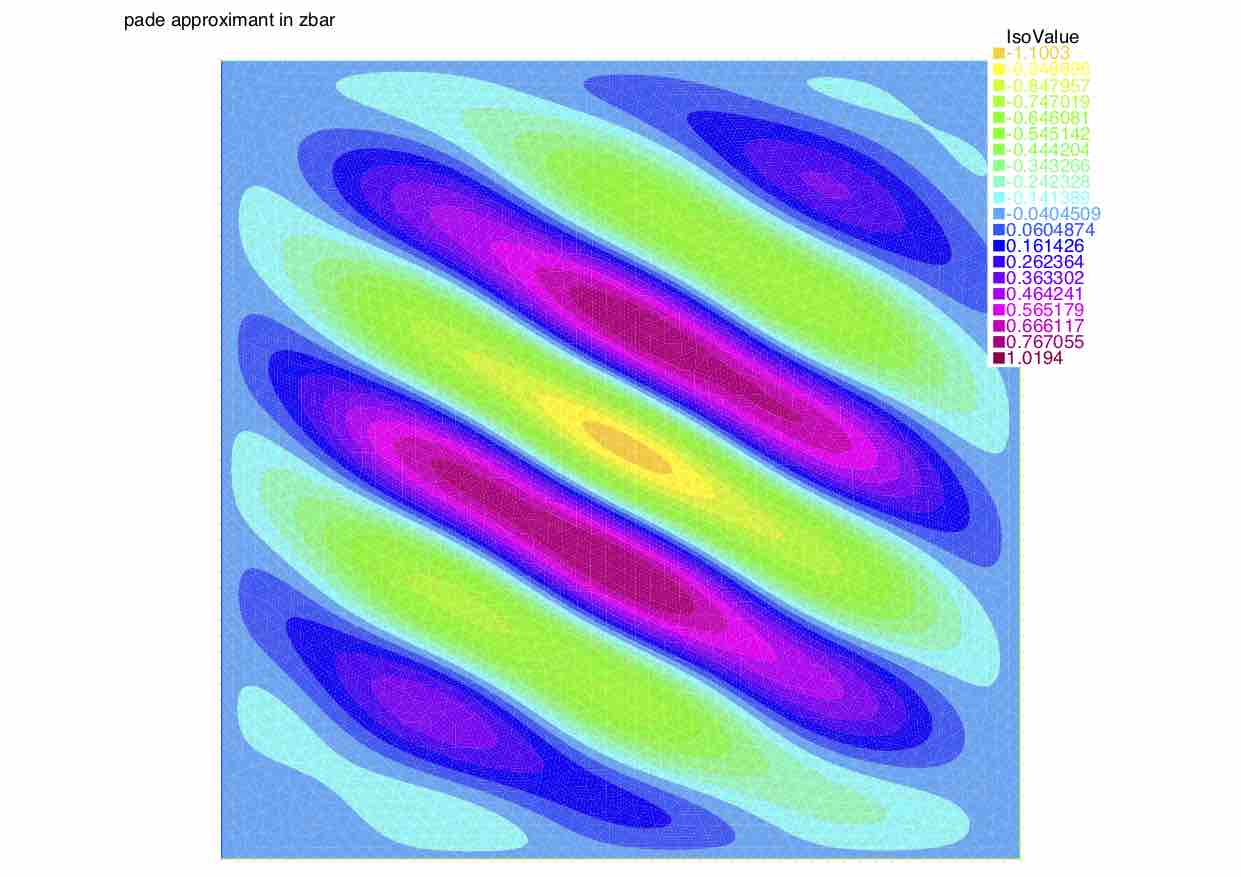}
\includegraphics[width=0.45\textwidth]{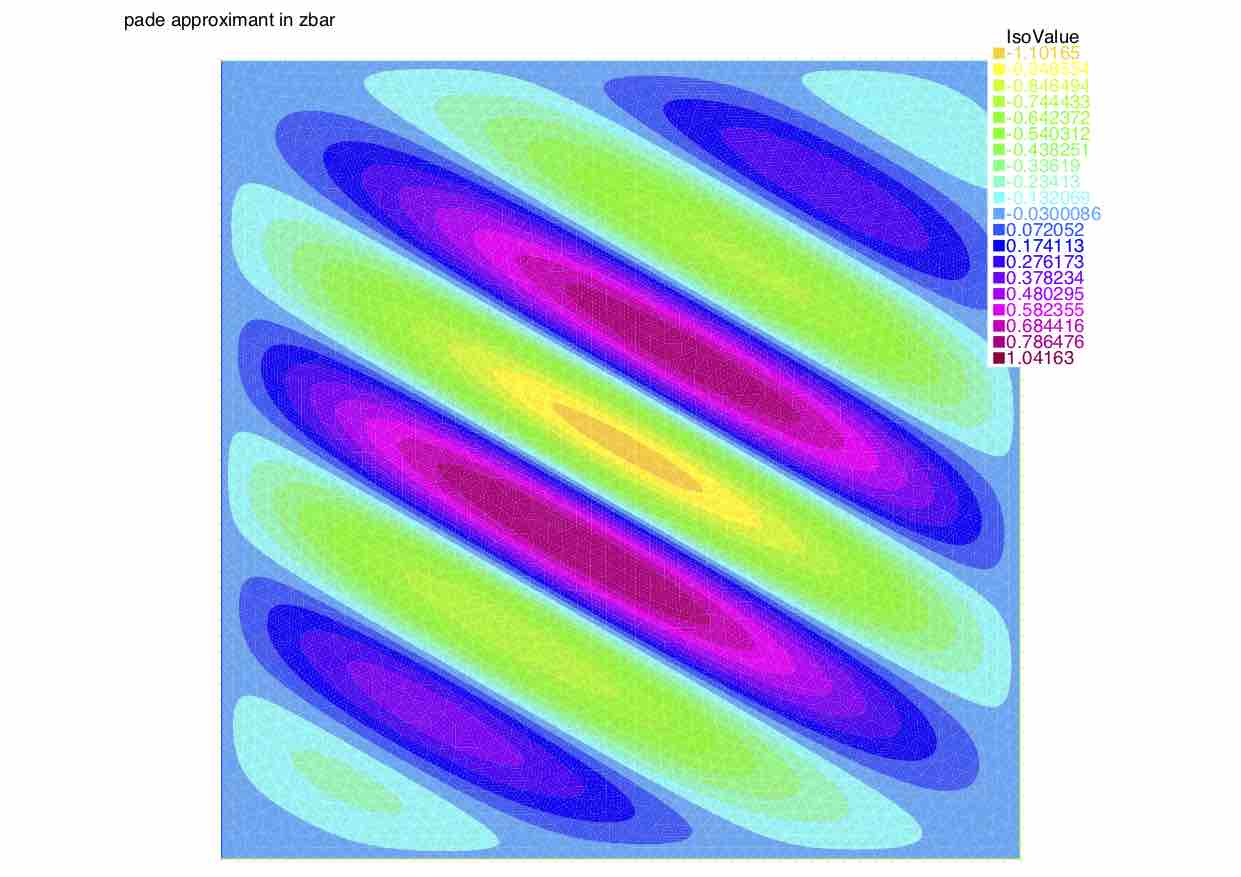}
\caption{LS-Pad\'e approximant $\mcS_{h,P}(51)$ centered in $z_0=47+0.5i$, with degrees $M=10$ and $N=2$ (left), $N=4$ (right).}
\label{fig:high_frequency_pade_zbar}
\end{figure}

\begin{figure}[htb]
\centering
\includegraphics[width=0.5\textwidth]{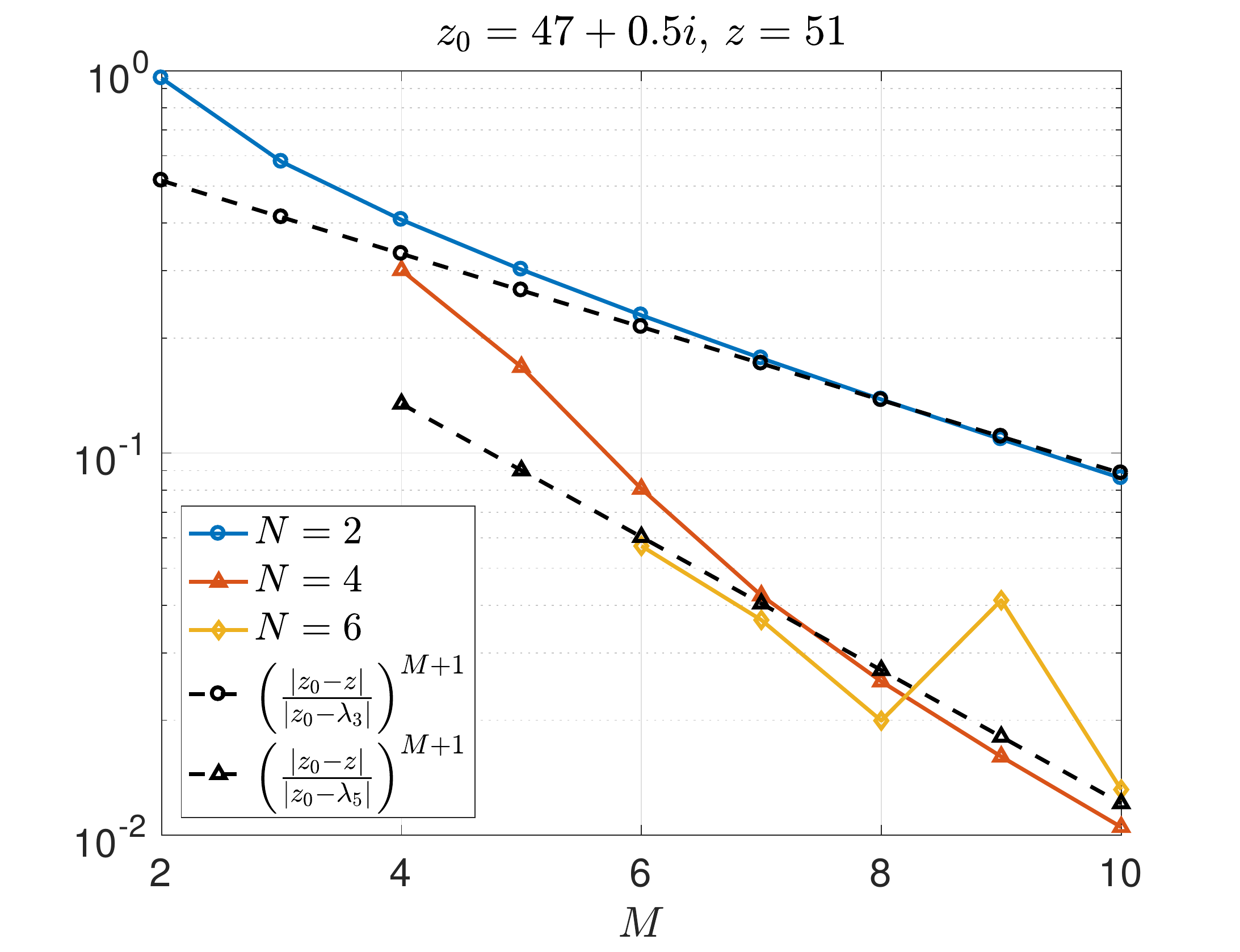}
\caption{Relative error 
$\frac{\normw{\mcS(z)-\mcS_{h,P}(z)}{V}{\sqrt{\Real{z_0}}}}{\normw{\mcS(z)}{V}{\sqrt{\Real{z_0}}}}$ in the point $z=51$, as a function of the degree of the numerator $M$.}
\label{fig:high_frequency_error_zbar}
\end{figure}

\begin{figure}[htb] 
\centering
\includegraphics[width=0.45\textwidth]{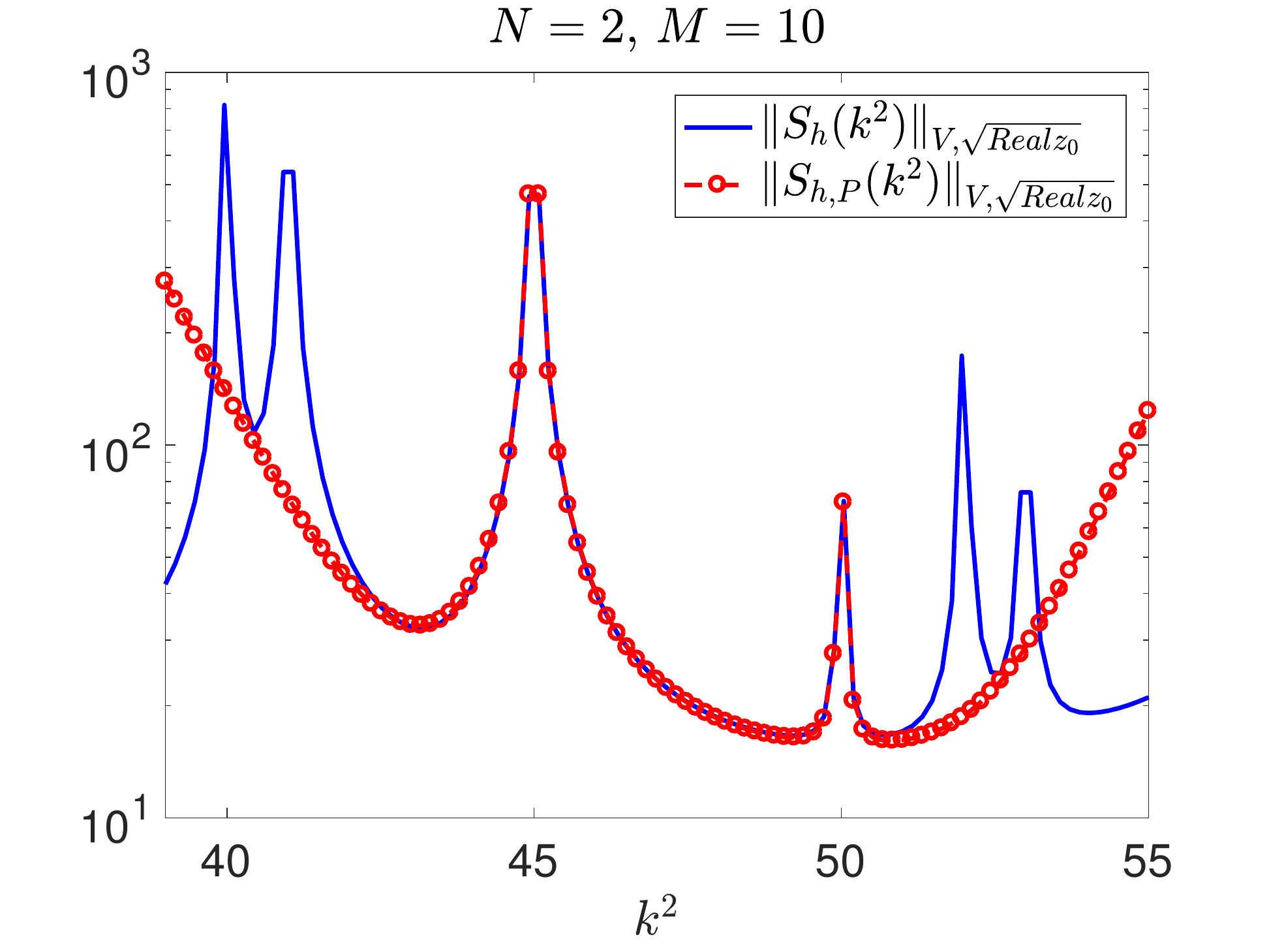}
\includegraphics[width=0.45\textwidth]{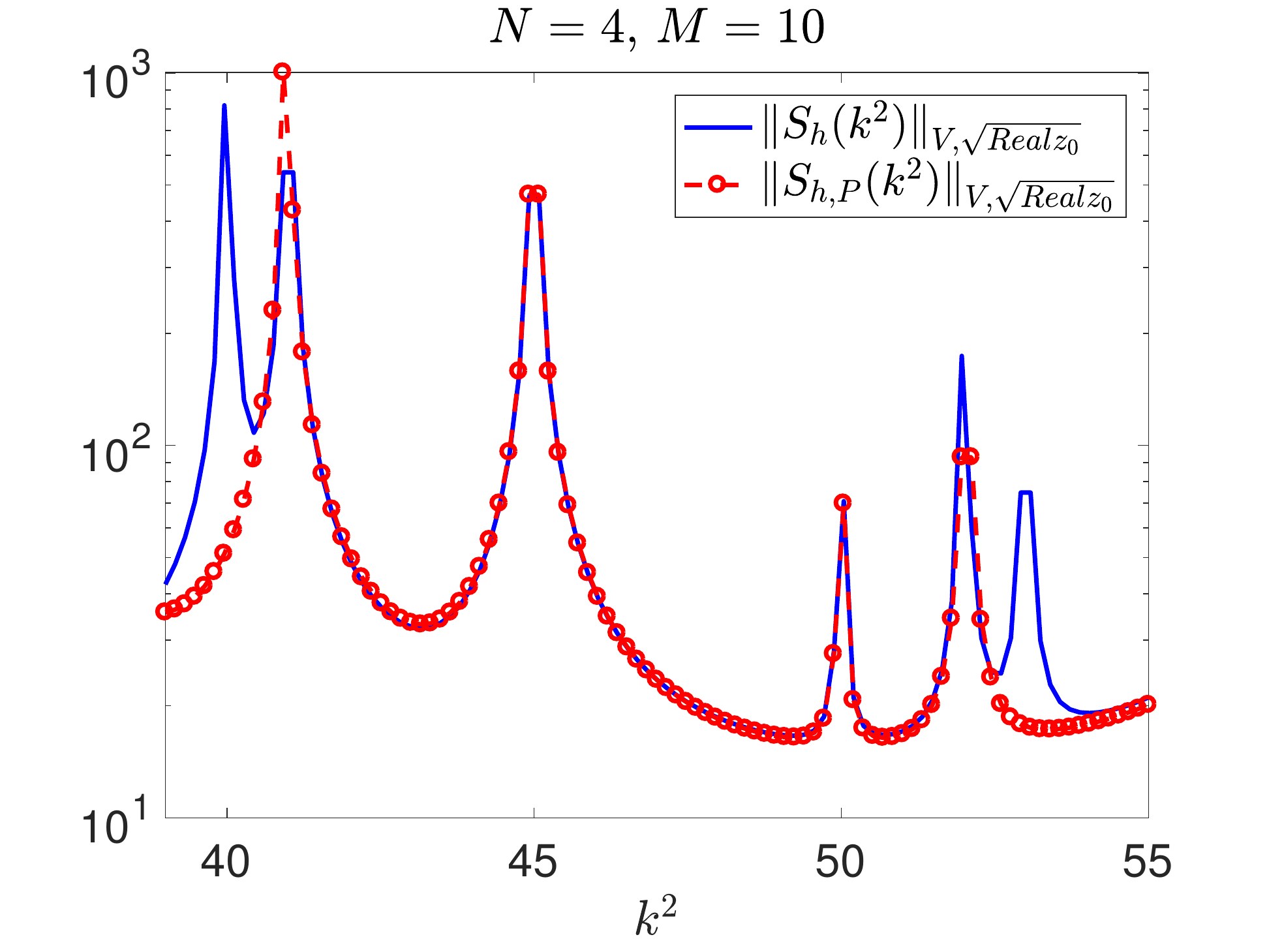}
\includegraphics[width=0.45\textwidth]{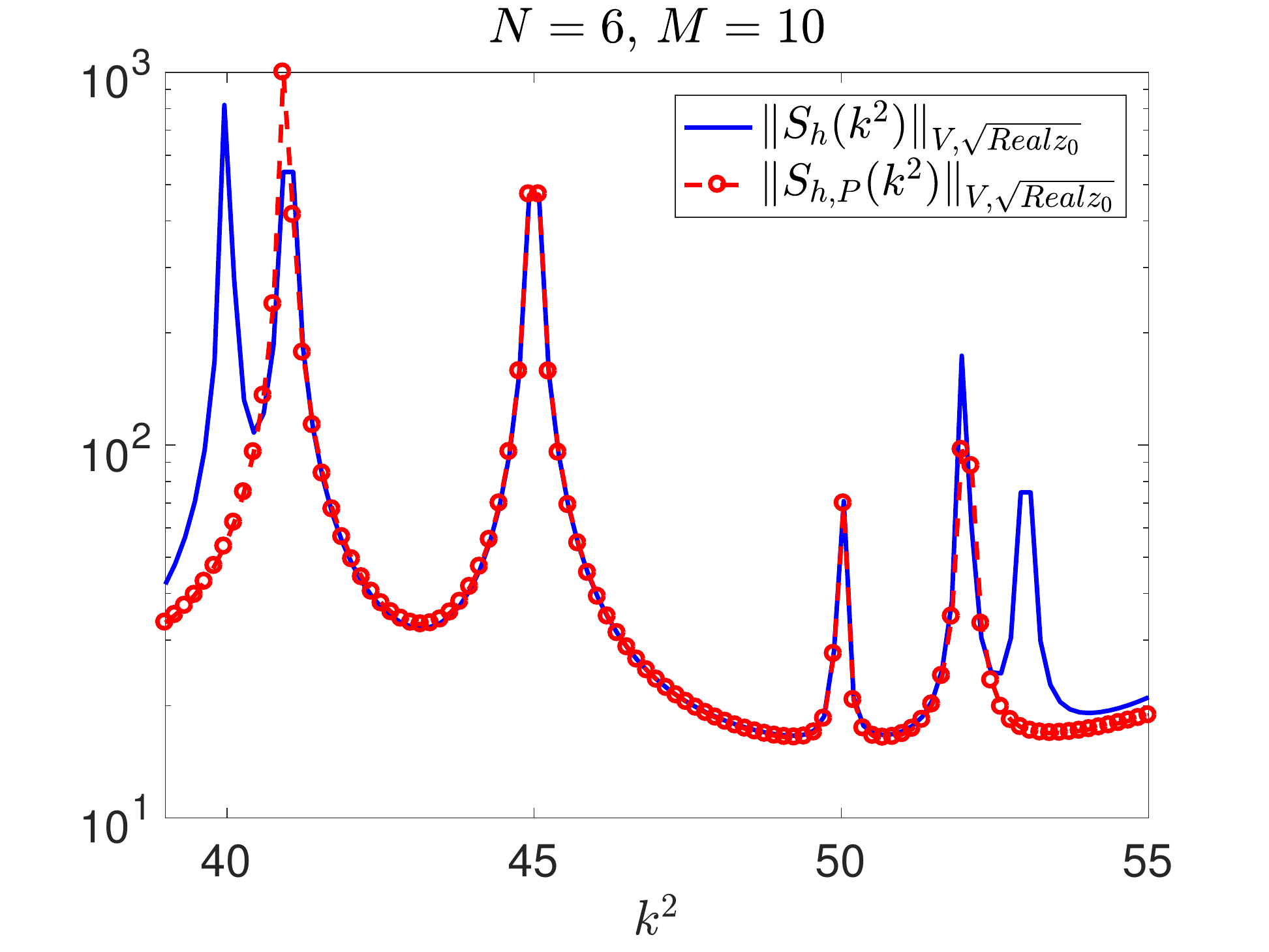}
\caption{Comparison between the weighted $H^1(D)$-norm of the $\mathbb P^3$ finite element solution $\mcS_h(z)$, and the weighted $H^1(D)$-norm of the LS-Pad\'e approximant $\pade{\mcS}(z)$, for numerator degree $M=10$ and denominator degree $N=2$, $N=4$ and $N=6$.}
\label{fig:high_frequency_norm_comparison}
\end{figure}

\begin{figure}[htb]
\centering
\includegraphics[width=0.5\textwidth]{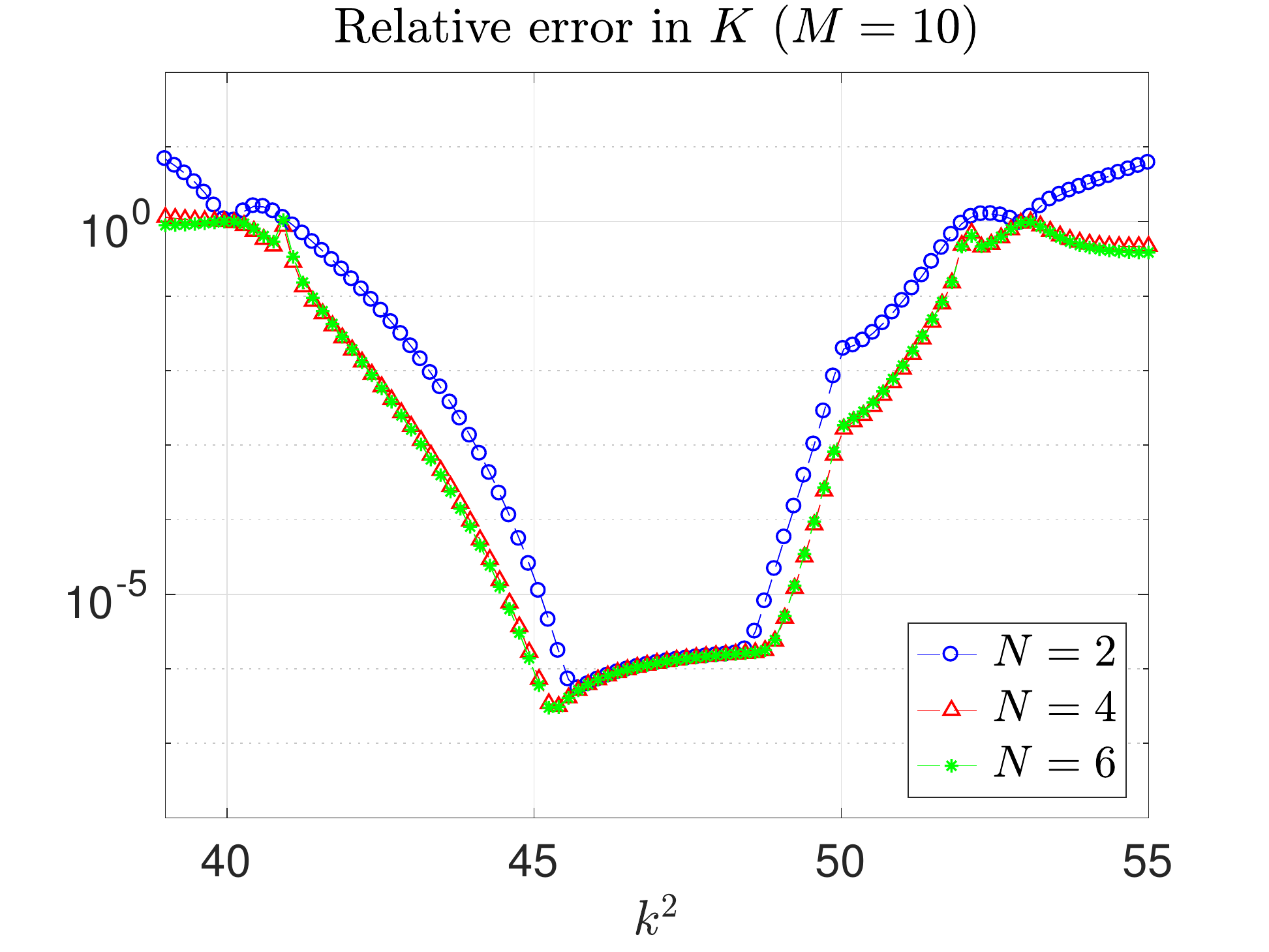}
\caption{Relative error $\frac{\normw{\mcS_h(z)-\mcS_{h,P}(z)}{H^1(D)}{\sqrt{\Real{z_0}}}}{
\normw{\mcS_h(z)}{H^1(D)}{\sqrt{\Real{z_0}}}}$ for different values of the degree of the denominator.}
\label{fig:high_frequency_relative_error}
\end{figure}

\begin{figure}[htb]
\centering
\includegraphics[width=0.45\textwidth]{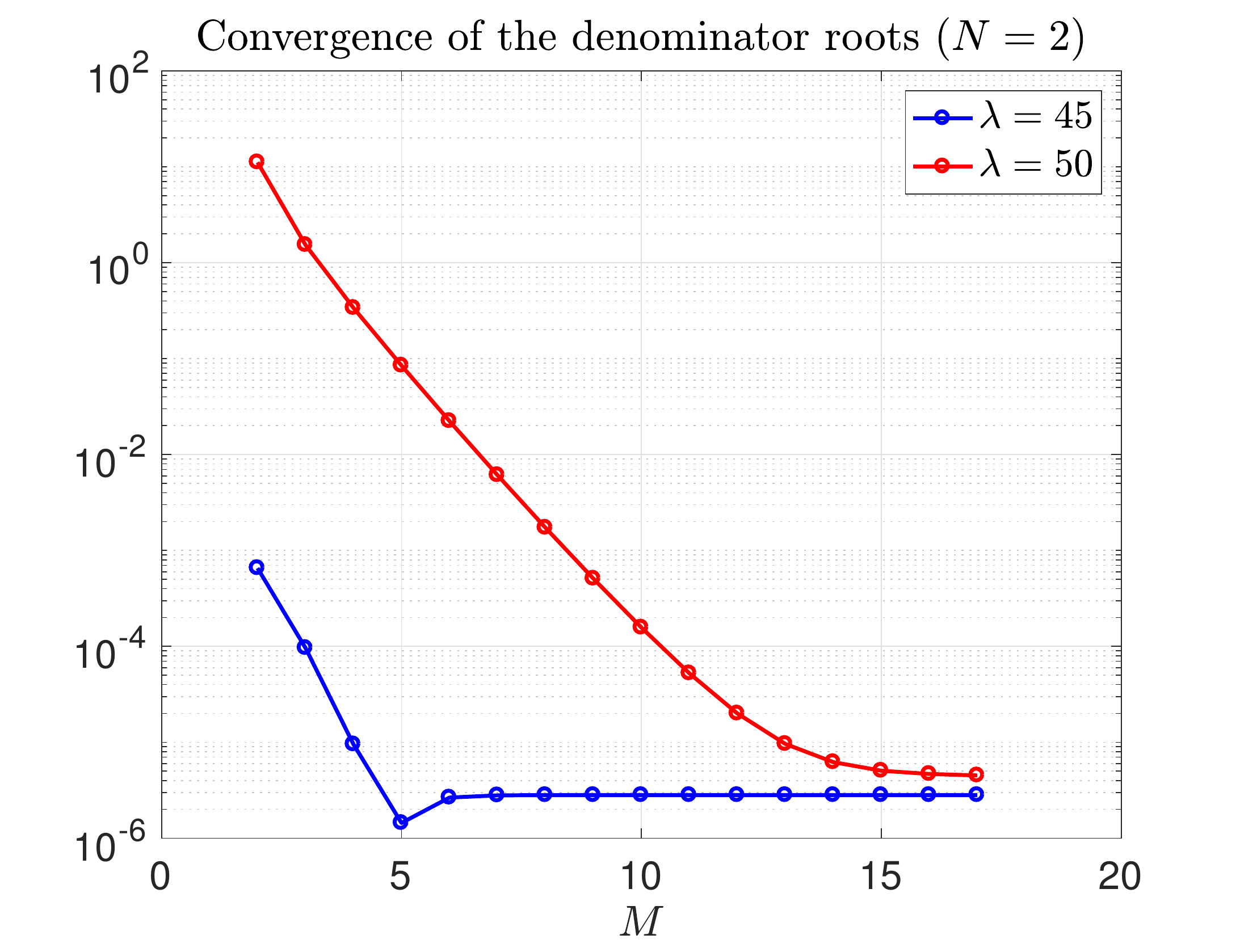}
\includegraphics[width=0.45\textwidth]{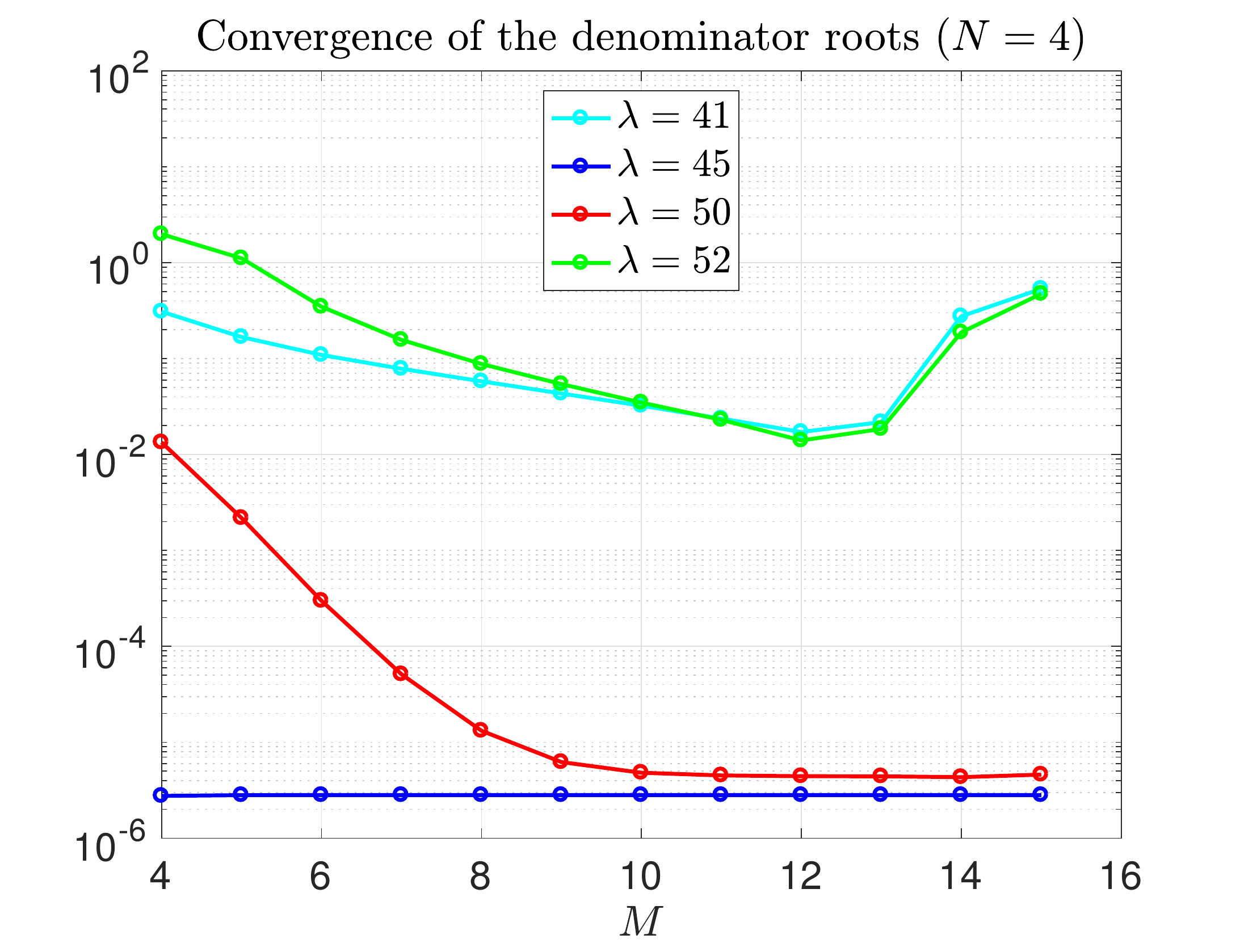}
\includegraphics[width=0.45\textwidth]{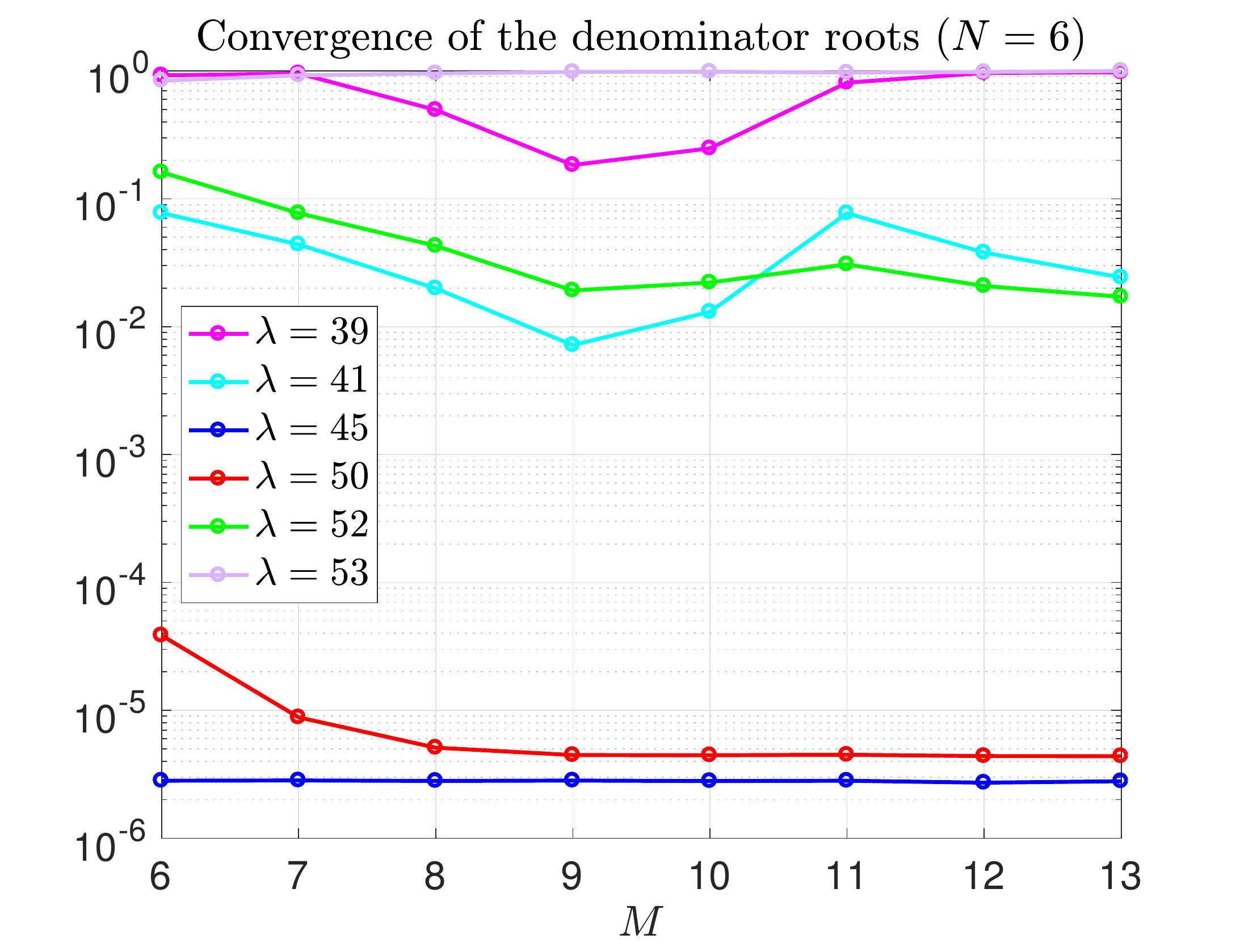}
\caption{Convergence of the roots $r_Q$ of the LS-Pad\'e denominator to the Laplace eigenvalues. The error $\abs{r_Q-\lambda}$ is plotted for $N=2$, $N=4$ and $N=6$.}
\label{fig:high_frequency_roots}
\end{figure}
%
\section{LS-Pad\'e approximant of the stochastic model problem}
\label{sec:stochastic_helmholtz}

This section deals with the stochastic counterpart of Problem~\ref{prob:model_problem_parametric}:

\begin{problem}[Stochastic Model Problem]
	\label{prob:model_problem_stochastic}
	The wavenumber $k^2$ of the Helm\-holtz equation is modeled as a random variable with bounded density function $\mathscr{F}_{k^2}$. In this section, either Dirichlet or Neumann or mixed Dirichlet/Neumann homogeneous boundary conditions on $\partial D$ are considered.
\end{problem}
We introduce a Lipschitz functional $\mcL:V\rightarrow\R$ representing a quantity of interest of the frequency response map $\mcS$, and we define the following two random variables:
\begin{equation}
	\label{eq:X}
	X:=\mcL(\mcS(k^2))
\end{equation}
and
\begin{equation}
	\label{eq:Xp}
	X_P:=\mcL(\pade{\mcS}(k^2))
\end{equation}
where $\pade{\mcS}=\frac{\num}{\den}$ is the LS-Pad\'e approximant of $\mcS$ centered in $z_0$, with $\Real{z_0}=\frac{k^2_{min}+k^2_{max}}{2}$ and $\Imag{z_0}\neq 0$; this guarantees that $z_0\notin\Lambda$, $\Lambda$ being the set of eigenvalues of the Laplacian, with the considered boundary conditions.
Let 
$\phi_X,\phi_{X_P}:\R\rightarrow \C$ denote the characteristic functions of $X$ and $X_P$, respectively, i.e., $\ds{\phi_X(t):=\mean{\e^{itX}}}$, $\ds{\phi_{X_P}(t):=\mean{\e^{itX_P}}}$.
We are interested in studying the LS-Pad\'e approximation error on the characteristic function, i.e., we aim at proving an a priori bound for 
\begin{equation}
\label{eq:errort}
err_t
=\abs{\phi_X(t)-\phi_{X_P}(t)}
\quad\text{for any } t\in \R.
\end{equation}

\begin{theorem}
\label{th:errt_bound}
Let $\mcL:V\rightarrow\R$ be a Lipschitz functional with Lipschitz constant $L$, and let $X,X_P$ be the random variables defined in~\eqref{eq:X} and~\eqref{eq:Xp}.
Given $\alpha>0$, then it holds
\begin{equation}
\label{eq:errt_bound}
err_t \leq \left(2\abs{K_\alpha} 
	+ \abs{t}\,L\,C\frac{1}{\alpha^3} \left(\frac{\rho}{R}\right)^{M+1} \abs{K}\right)
	\sup_{x\in K} \mathscr{F}_{k^2}(x)
	\quad \forall\ t\in\R,
\end{equation}
with the same definitions of $R$, $\rho$, and $K_\alpha$, and the same characterization of $C>0$ as in Theorem~\ref{th:pade_conv}, and $\abs{\cdot}$ denoting the Lebesgue measure.
\end{theorem}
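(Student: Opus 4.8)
The plan is to bound the difference of the two characteristic functions by splitting the error into a contribution from the region near the poles (where $\pade{\mcS}$ may fail to approximate $\mcS$) and a contribution from the region where the Pad\'e bound~\eqref{eq:pade_approx_S} applies. First I would write
\begin{equation*}
	err_t = \abs{\mean{\e^{itX}} - \mean{\e^{itX_P}}}
	\leq \mean{\abs{\e^{itX} - \e^{itX_P}}}
	= \int_K \abs{\e^{itX(x)} - \e^{itX_P(x)}}\,\mathscr{F}_{k^2}(x)\,dx,
\end{equation*}
using Jensen's inequality (triangle inequality for the expectation) and the fact that the law of $k^2$ has density $\mathscr{F}_{k^2}$ supported on $K$. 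Here $X(x) = \mcL(\mcS(x))$ and $X_P(x) = \mcL(\pade{\mcS}(x))$ denote the pointwise values of the random variables at $k^2 = x$. Pulling out $\sup_{x\in K}\mathscr{F}_{k^2}(x)$, the task reduces to bounding $\int_K \abs{\e^{itX(x)} - \e^{itX_P(x)}}\,dx$.

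Next I would use the elementary Lipschitz bound $\abs{\e^{ia} - \e^{ib}} \leq \min\{2, \abs{a-b}\}$ for real $a,b$, giving $\abs{\e^{itX(x)} - \e^{itX_P(x)}} \leq \min\{2, \abs{t}\,\abs{X(x) - X_P(x)}\}$. The domain of integration $K$ is then split as $K = K_\alpha \cup (K\setminus K_\alpha)$, where $K_\alpha$ is the $\alpha$-neighborhood of the poles introduced in Theorem~\ref{th:pade_conv}. On $K_\alpha$ I would use the crude bound $\abs{\e^{itX} - \e^{itX_P}} \leq 2$, contributing at most $2\abs{K_\alpha}$ after integration. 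On $K\setminus K_\alpha$ I would use the other branch, $\abs{\e^{itX} - \e^{itX_P}} \leq \abs{t}\,\abs{X(x) - X_P(x)}$, and then apply the Lipschitz property of $\mcL$:
\begin{equation*}
	\abs{X(x) - X_P(x)}
	= \abs{\mcL(\mcS(x)) - \mcL(\pade{\mcS}(x))}
	\leq L\,\normw{\mcS(x) - \pade{\mcS}(x)}{V}{\sqrt{\Real{z_0}}}.
\end{equation*}
Invoking Theorem~\ref{th:pade_conv}, the right-hand side is bounded by $L\,C\,\tfrac{1}{\alpha^3}\left(\tfrac{\rho}{R}\right)^{M+1}$ uniformly on $K\setminus K_\alpha$, so integrating over that set contributes at most $\abs{t}\,L\,C\,\tfrac{1}{\alpha^3}\left(\tfrac{\rho}{R}\right)^{M+1}\abs{K\setminus K_\alpha} \leq \abs{t}\,L\,C\,\tfrac{1}{\alpha^3}\left(\tfrac{\rho}{R}\right)^{M+1}\abs{K}$. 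Summing the two contributions and factoring out $\sup_{x\in K}\mathscr{F}_{k^2}(x)$ yields exactly~\eqref{eq:errt_bound}.

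The routine pieces are the measure-theoretic rewriting and the two elementary inequalities; the one point demanding care is ensuring the hypotheses of Theorem~\ref{th:pade_conv} are genuinely met on the relevant set. Specifically, the theorem's bound~\eqref{eq:pade_approx_S} holds for $z\in K\setminus K_\alpha$ only once $M\geq M^\star$ and $\rho, R$ are chosen so that $\ball{z_0}{\rho}\supset K$ and $\overline{\ball{z_0}{R}}$ contains exactly $N$ poles of $\mcS$; I would state these as standing assumptions inherited from Theorem~\ref{th:pade_conv}. A subtle point is that $\mcS$ is real-valued under $\mcL$ only in the sense that $\mcL$ maps into $\R$, so $X, X_P$ are genuine real random variables and the scalar inequality $\abs{\e^{ia}-\e^{ib}}\leq\min\{2,\abs{a-b}\}$ applies directly; this is the main thing to verify rather than a real analytic obstacle. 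Thus the proof is essentially a clean two-region splitting argument, and I anticipate no serious difficulty beyond bookkeeping the constants.
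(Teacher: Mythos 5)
Your proof is correct and follows essentially the same route as the paper: the same splitting of the integral over $K_\alpha$ (crude bound by $2$) and $K\setminus K_\alpha$ (Lipschitz bound $\abs{t}\,L$ combined with~\eqref{eq:pade_approx_S}), with only the cosmetic difference that you package the two cases via $\abs{\e^{ia}-\e^{ib}}\leq\min\{2,\abs{a-b}\}$ after moving the modulus inside the expectation, whereas the paper splits the integral first. Your remark that the bound implicitly requires $M\geq M^\star$ and the admissibility conditions on $\rho$ and $R$ from Theorem~\ref{th:pade_conv} is accurate and matches the paper's implicit assumptions.
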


\begin{proof}
Using the definition of the characteristic function and the linearity of the expected value we find
\begin{align*}
err_t
&=\abs{\phi_X(t)-\phi_{X_P}(t)}
=\abs{ \mean{ \e^{itX} } - \mean{ \e^{itX_P} } }\\
&=\abs{ \mean{ \e^{itX} - \e^{itX_P} } }
=\abs{ \int_K \left( \e^{it\mcL(\mcS(x))} - \e^{it\mcL(\pade{\mcS}(x))} \right) 
	\mathscr{F}_{k^2}(x)\,dx}\\
&\leq \abs{ \int_{K_\alpha} \left( \e^{it\mcL(\mcS(x))} - \e^{it\mcL(\pade{\mcS}(x))} \right) \mathscr{F}_{k^2}(x)\,dx}\\
&\quad +\abs{ \int_{K\setminus K_\alpha} \left( \e^{it\mcL(\mcS(x))} - \e^{it\mcL(\pade{\mcS}(x))} \right)\mathscr{F}_{k^2}(x)\,dx}.
\end{align*}
We bound the two integrals separately. For the integral over $K_\alpha$, we have
\begin{align}
\nonumber
&\abs{ \int_{K_\alpha} \left( \e^{it\mcL(\mcS(x))} - \e^{it\mcL(\pade{\mcS}(x))} \right) 
	\mathscr{F}_{k^2}(x)\,dx}\\
\label{eq:bound_I1}
&\leq \int_{K_\alpha} \abs{ \e^{it\mcL(\mcS(x))} } \mathscr{F}_{k^2}(x)  dx +
	\int_{K_\alpha} \abs{ \e^{it\mcL(\pade{\mcS}(x))} } \mathscr{F}_{k^2}(x) dx
\leq 2 \abs{K_\alpha} \sup_{x\in K_\alpha} \mathscr{F}_{k^2}(x).
\end{align}
Consider now the integral over $K\setminus K_\alpha$. Since $\e^{itx}$ is Lipschitz as a function of $x$ with constant $\abs{t}$, and $\mcL$ is Lipschitz with constant $L$, we find
\begin{align*}
&\abs{ \int_{K\setminus K_\alpha} \left( \e^{it\mcL(\mcS(x))} - \e^{it\mcL(\pade{\mcS}(x))}
	 \right)\mathscr{F}_{k^2}(x)\,dx}\\
&\leq \int_{K\setminus K_\alpha} \abs{ \e^{it\mcL(\mcS(x))} - \e^{it\mcL(\pade{\mcS}(x))} }
	\mathscr{F}_{k^2}(x) dx\\
&\leq \abs{t} \int_{K\setminus K_\alpha} \abs{ \mcL(\mcS(x)) - \mcL(\pade{\mcS(x))} }
	 \mathscr{F}_{k^2}(x)  dx\\
&\leq \abs{t}\, L \int_{K\setminus K_\alpha} \normw{ \mcS(x)-\pade{\mcS(x)} }{V}{\sqrt{\Real{z_0}}}
	 \mathscr{F}_{k^2}(x) dx.
\end{align*}
From the bound~\eqref{eq:pade_approx_S} of~Theorem~\ref{th:pade_conv}, we obtain
\begin{align}
\nonumber
&\abs{ \int_{K\setminus K_\alpha} \left( \e^{it\mcL(\mcS(x))} - \e^{it\mcL(\pade{\mcS}(x))}
	 \right)\mathscr{F}_{k^2}(x)\,dx}\\
\label{eq:bound_I2}
&\quad \leq \abs{t}\, L\,C \frac{1}{\alpha^3}  \left(\frac{\rho}{R}\right)^{M+1} \abs{K}
	\sup_{x\in K\setminus K_\alpha} \mathscr{F}_{k^2}(x).
\end{align}
The conclusion follows from inequalities~\eqref{eq:bound_I1} and~\eqref{eq:bound_I2}.  
\end{proof}

\begin{corollary}
\label{cor:errt}
Under the same assumptions as in Theorem~\ref{th:errt_bound}, it holds 
$$
\lim_{M\rightarrow \infty} err_t=0 \quad \forall t\in\R.
$$ 
In particular, there exists $C>0$ such that for any $t\in\R$  
\begin{equation*}
	err_t\leq C\abs{t}^{1/4}\left(\frac{\rho}{R}\right)^{\frac{M+1}{4}}.
\end{equation*}
\end{corollary}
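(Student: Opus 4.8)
The plan is to treat the parameter $\alpha$ appearing in the bound~\eqref{eq:errt_bound} of Theorem~\ref{th:errt_bound} as a free quantity and to optimize over it. First I would simplify the two $\alpha$-dependent contributions. Since $K_\alpha=\bigcup_{\lambda\in\Lambda\cap K}(\lambda-\alpha,\lambda+\alpha)$ is a union of finitely many intervals of length $2\alpha$, one for each eigenvalue of $\Lambda$ lying in $K$, its measure satisfies $\abs{K_\alpha}\leq 2\,\alpha\,\#(\Lambda\cap K)$, i.e.\ it is linear in $\alpha$ with a fixed constant. Moreover, the density $\mathscr{F}_{k^2}$ is bounded by assumption in Problem~\ref{prob:model_problem_stochastic}, and $\abs{K}$ is a fixed constant. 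Absorbing all factors that are independent of $\alpha$ and $t$ into constants $C_1,C_2>0$, the estimate~\eqref{eq:errt_bound} reduces to
\[
	err_t\;\leq\; C_1\,\alpha \;+\; C_2\,\frac{\abs{t}}{\alpha^3}\left(\frac{\rho}{R}\right)^{M+1}.
\]

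Next I would minimize the right-hand side over $\alpha>0$. Writing $\beta:=\left(\frac{\rho}{R}\right)^{M+1}$ for brevity, the map $\alpha\mapsto C_1\alpha+C_2\abs{t}\beta\,\alpha^{-3}$ is convex on $(0,\infty)$, and its derivative vanishes at $\alpha^\star=\big(3C_2\abs{t}\beta/C_1\big)^{1/4}$, so that $\alpha^\star$ is proportional to $\big(\abs{t}\beta\big)^{1/4}$. Substituting $\alpha=\alpha^\star$ back, both summands become proportional to $\big(\abs{t}\beta\big)^{1/4}=\abs{t}^{1/4}\left(\frac{\rho}{R}\right)^{(M+1)/4}$, which yields exactly the claimed estimate
\[
	err_t\;\leq\; C\,\abs{t}^{1/4}\left(\frac{\rho}{R}\right)^{\frac{M+1}{4}}.
\]

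The step requiring the most care, and the main obstacle, is that Theorem~\ref{th:pade_conv} (on which~\eqref{eq:errt_bound} rests) applies only for $\alpha$ \emph{small enough}, so one must check that the optimal choice $\alpha^\star$ is admissible. For fixed $t$ this is automatic in the limit: since $\rho<R$ we have $\beta\to 0$ as $M\to\infty$, hence $\alpha^\star\to 0$ falls below any prescribed threshold for $M$ large, and the displayed estimate together with $\beta\to0$ gives $\lim_{M\to\infty}err_t=0$. For the uniform-in-$t$ statement, the difficulty is that very large $\abs{t}$ (with $M$ fixed) could push $\alpha^\star$ above the admissible threshold $\alpha_0$. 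In that regime I would discard the optimization and fall back on the trivial bound $err_t=\abs{\phi_X(t)-\phi_{X_P}(t)}\leq\abs{\phi_X(t)}+\abs{\phi_{X_P}(t)}\leq 2$, which holds for all $t$ because characteristic functions have modulus at most $1$. Indeed, $\alpha^\star>\alpha_0$ is equivalent to $\abs{t}^{1/4}\beta^{1/4}$ exceeding a fixed positive constant, so enlarging $C$ so that $C\,\abs{t}^{1/4}\beta^{1/4}\geq 2$ in this range closes the case. Combining the two regimes gives the inequality for all $t\in\R$ and, together with the limit above, completes the proof.
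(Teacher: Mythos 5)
Your proof is correct and follows essentially the same route as the paper: both bound $\abs{K_\alpha}$ linearly in $\alpha$, absorb the remaining factors into constants, and minimize the right-hand side of~\eqref{eq:errt_bound} over $\alpha>0$, which balances the two terms at $\alpha^\star\propto\bigl(\abs{t}(\rho/R)^{M+1}\bigr)^{1/4}$ and yields the exponent $\tfrac{M+1}{4}$ together with the $\abs{t}^{1/4}$ factor. Your extra verification that $\alpha^\star$ stays below the admissibility threshold of Theorem~\ref{th:pade_conv} (with the fallback $err_t\leq 2$ for large $\abs{t}$) is a refinement the paper's own proof silently skips, but it does not change the substance of the argument.
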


\begin{proof}
	We have $\abs{K_\alpha}\leq \alpha n$, with $n\leq N$ the number of poles of $\mcS$ in $K$. From Theorem~\ref{th:errt_bound} it holds
$$
err_t\leq\inf_{\alpha>0}\left(C_1\alpha+C_2(t)\frac{1}{\alpha^3}
\left(\frac{\rho}{R}\right)^{M+1}\right),
$$
with $C_1=2n\sup_{x\in K} \mathscr{F}_{k^2}(x)$ and $C_2(t)=\abs{t}LC\abs{K}\mathscr{F}_{k^2}(x)$. 
By optimizing the expression in $\alpha$ we obtain
$$
err_t\leq C_t\left(\frac{\rho}{R}\right)^{\frac{M+1}{4}}
$$
with $C_t=C_1^{3/4}C_2(t)^{1/4}(3^{1/4}+3^{-3/4})$.
\end{proof}

This corollary establishes, in particular, uniform exponential convergence of $\phi_{X_P}$ to $\phi_X$ on any compact subset of $\R$.

\begin{remark}
	Theorem~\ref{th:errt_bound} and Corollary~\ref{cor:errt} can be generalized to derive an a priori upper bound on $err_\xi=\abs{\mean{\xi(X)}-\mean{\xi(X_P)}}$, for any continuous and bounded functional $\xi:\R\rightarrow\R$. Thus, the weak convergence of $X_P$ to $X$ follows, as $M\rightarrow+\infty$.
\end{remark}

Let us consider the case of $\partial D=\Gamma_D$. Let $K=[7,14]$ be the interval of interest (which contains three eigenvalues of the Dirichlet-Laplace operator: $8, 10, 13$), and let the wavenumber be modeled as a random variable uniformly distributed on $K$, i.e., $k^2\sim\mathcal U(K)$. Given the functional $\mcL = \normw{\cdot}{V}{\sqrt{\Real{z_0}}}$, where $z_0=10+0.5i$, we consider the random variables $X=\normw{\mcS_h(k^2)}{V}{\sqrt{\Real{z_0}}}$ and $X_P=\normw{\mcS_{h,P}(k^2)}{V}{\sqrt{\Real{z_0}}}$. We define as $\mcS_h$ the $\mathbb P^3$ finite element approximation of $\mcS$; then $\mcS_{h,P}$ is the LS-Pad\'e approximant of $\mcS_h$, centered in $z_0$ and with polynomial degrees $(M,N)$.
In Figure~\ref{fig:stochastic_samples}, we display the random variables $X$ and $X_P$ evaluated at 100 sample points uniformly distributed in $K$. When the degree of the LS-Pad\'e denominator is $N=3$, all the poles are correctly identified by the LS-Pad\'e approximant, provided that $M$ is larger than $4$.
In Figure~\ref{fig:stochastic_phi} we plot the characteristic function of the random variable $X_P$, $\phi_{X_P}(t)$, where the degrees of the Pad\'e denominator and denominator are $N=3$,  and $M=2,4,6$, respectively. The expected value has been computed by the Monte Carlo method, using $10^5$ samples.
    
\begin{figure}[htb]
\centering
\includegraphics[width=0.45\textwidth]{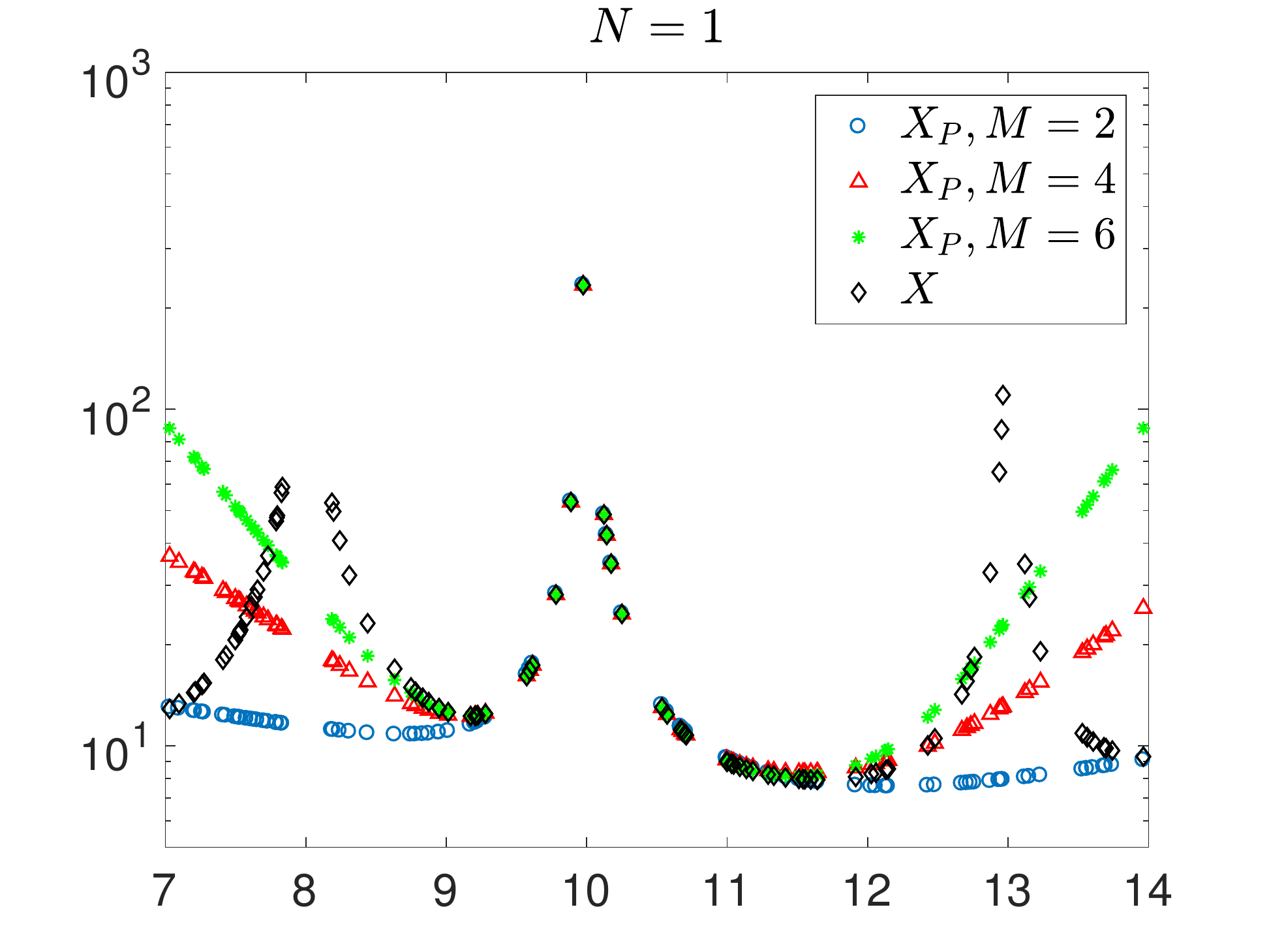}
\includegraphics[width=0.45\textwidth]{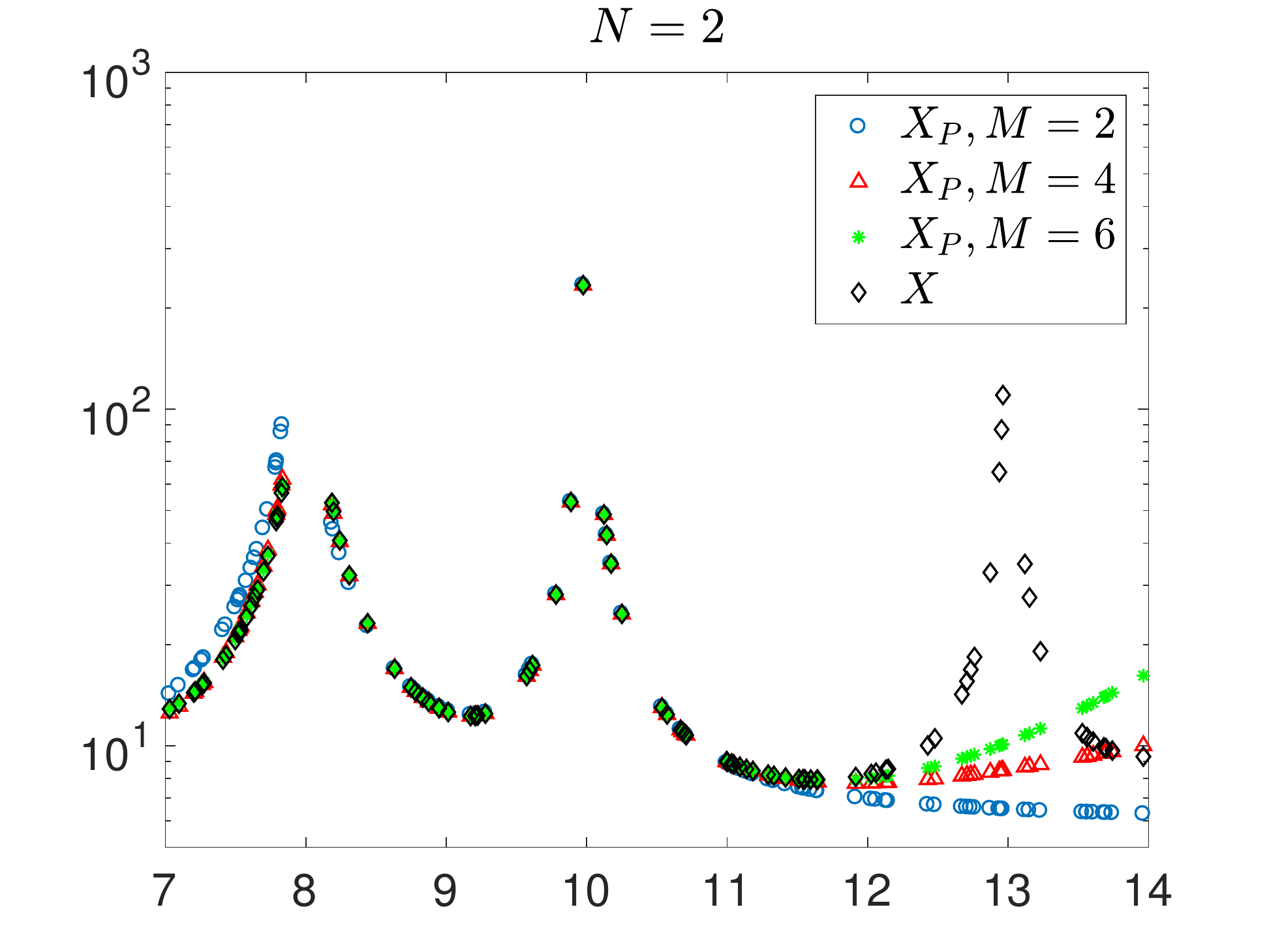}
\includegraphics[width=0.45\textwidth]{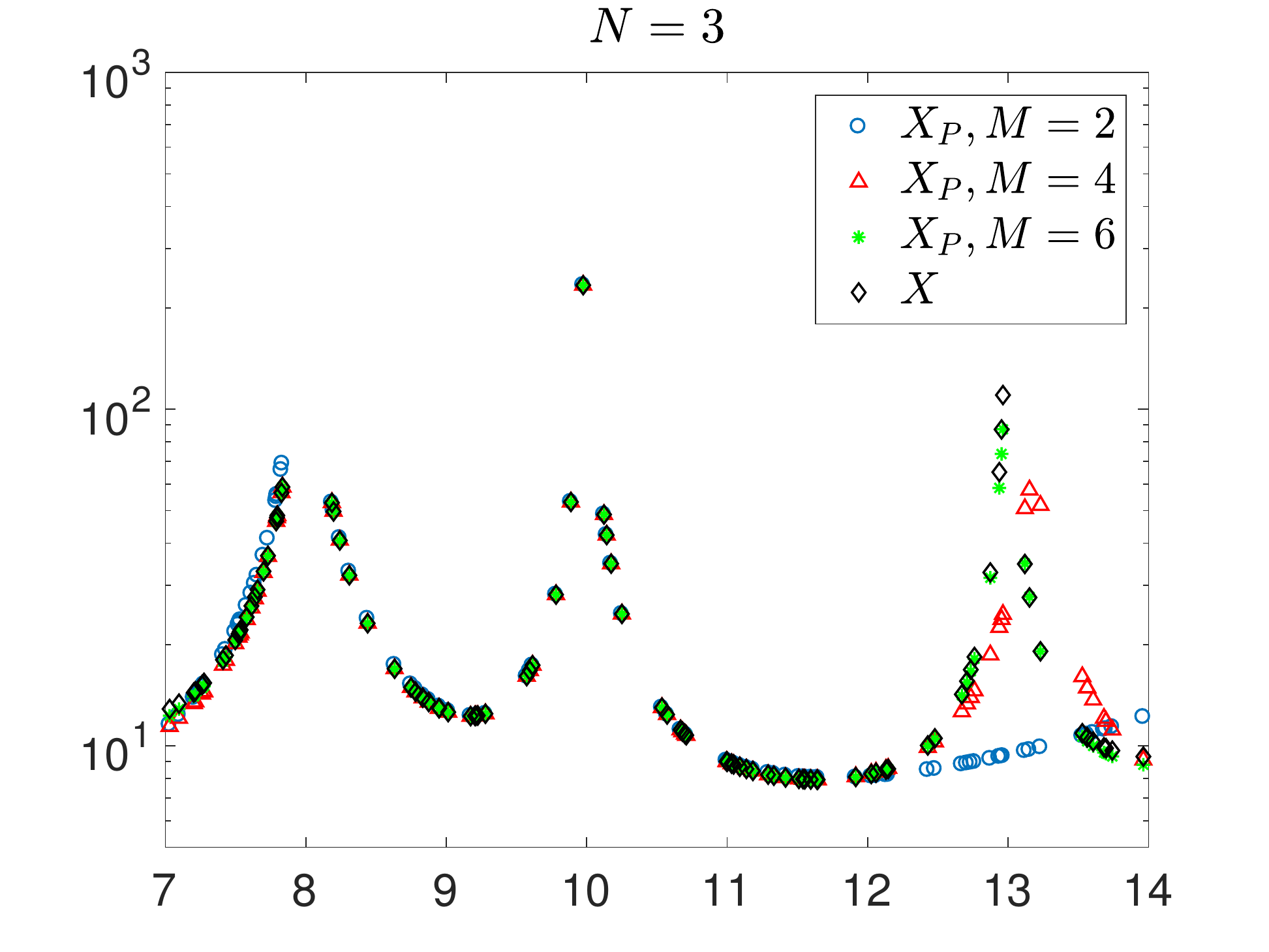}
\caption{Comparison between $X=\normw{\mcS_h(k^2)}{V}{\sqrt{\Real{z_0}}}$ and $X_P=\normw{\mcS_{h,P}(k^2)}{V}{\sqrt{\Real{z_0}}}$ evaluated at 100 sample points uniformly distributed in $K=[7,14]$).}
\label{fig:stochastic_samples}
\end{figure}

\begin{figure}[htb]
\centering
\includegraphics[width=0.7\textwidth]{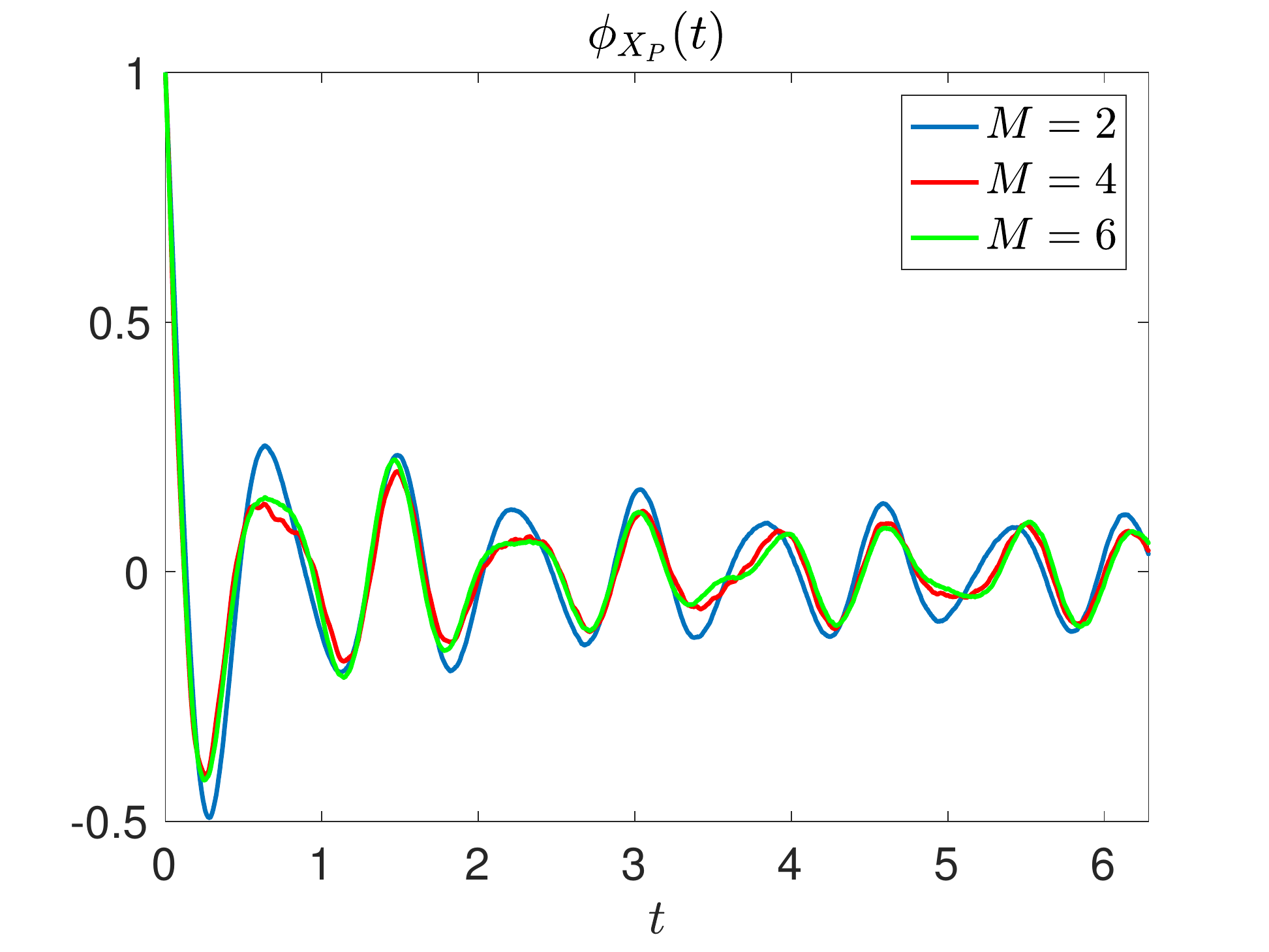}
\caption{Characteristic function $\phi_{X_P}(t)$, with $N=3$ and $M=2,4,6$.}
\label{fig:stochastic_phi}
\end{figure}



\section{Conclusions}
\label{sec:conclusions}

The present paper concerns a model order reduction method based on the single-point LS-Pad\'e approximation technique introduced in~\cite{Bonizzoni2016}. 
We have described an algorithm to compute the LS-Pad\'e approximant of the Helmholtz frequency response map, and we have explored the applicability and potentiality of 
the method via 2D numerical experiments in various contexts. 
Moreover, the time-harmonic wave equation with random wavenumber has been analyzed.

We are currently investigating the extension of the proposed methodology and of its convergence analysis to the case of \emph{multi-point} LS-Pad\'e expansions, 
where evaluations of the frequency response map $\mcS$ and of its derivatives at multiple frequencies are used. 
We believe that this technique will outperform the single-point one, when a large number of singularities of $\mcS$ need to be identified.



\end{document}